\newcommand{\nc}{\newcommand}
\nc{\G}{{\Gamma}}
\nc{\BC}{{\mathbb C}}
\nc{\BQ}{{\mathbb Q}}
\nc{\BR}{{\mathbb R}}
\nc{\BZ}{{\mathbb Z}}
\nc{\BP}{{\mathbb P}}
\nc{\BN}{{\mathbb N}}
\nc{\BM}{{\mathbb M}}
\nc{\fH}{{\mathfrak{H}}}
\nc{\vp}{{\varepsilon}}\nc{\dpar}{{\partial}}\nc{\al}{{\alpha}}
\nc{\PSL}{{\mbox{PSL}_2(\BR)}}
\nc{\PS}{{\mbox{PSL}_2(\BZ)}}
\nc{\SL}{{\mbox{SL}_2(\BZ)}}
\def\Re{{\rm Re\,}}
\def\Im{{\rm Im\,}}
\def\I{{\mathrm{i}}}
\newtheorem{thm}{Theorem}[section]
\newtheorem{cor}[thm]{Corollary}
\newtheorem{lem}[thm]{Lemma}
\newtheorem{prop}[thm]{Proposition}
\theoremstyle{definition}
\theoremstyle{remark}
\newtheorem{rem}{Remark}[section]
\newtheorem{ex}{Example}[section]
\numberwithin{equation}{section}
\begin{document}

\title{Critical points of Green's function and Geometric Function Theory}

\author{Bj\"orn Gustafsson}
\address{B. Gustafsson\\
Deparment of Mathematics, KTH, 100 44 Stockholm, Sweden.}

\email{gbjorn@@kth.se }

\author{Ahmed Sebbar}
\address{A. Sebbar\\
Universit\'e Bordeaux I\\
Institut de Math\'ematiques, 33405 Talence, France}

\email{sebbar@@math.u-bordeaux1.fr}

\date{December 7, 2009}

\begin{abstract} We study questions related to critical points of the
Green's function of a bounded multiply connected domain in
the complex plane. The motion of critical points, their limiting
positions as the pole approaches the boundary and the differential geometry
of the level lines of the Green's function are main themes in the paper.
A unifying role is played by
various affine and projective connections and corresponding M\"obius invariant
differential operators. In the doubly connected case the three
Eisenstein series $E_2$, $E_4$, $E_6$ are used.
A specific result is that a doubly connected domain is the disjoint union
of the set of critical points of the Green's function, the set of zeros of
the Bergman
kernel and the separating boundary limit positions for these.
At the end we consider the projective properties of the prepotential
associated to a second order differential operator depending canonically on
the domain.
\end{abstract}

\maketitle

\subjclass{11F03, 30C40, 30F30, 34B27, 53B10, 53B15}

\keywords{Critical point, Green's function, Neumann function,
Bergman kernel, Schiffer kernel, Schottky-Klein prime form,
Schottky double, weighted Bergman space,
Poincar\'{e} metric, Martin boundary, projective structure,
projective connection, affine connection, prepotential,
Eisenstein series}

\thanks{The first author is supported by the Swedish Research Council VR.}

\tableofcontents


\section{Introduction}
The results of this paper have their origin in attempts to
understand trajectories of critical points in a planar multiply
connected domain. We start with studying, for a given bounded
multiply connected domain $\Omega$ in the complex plane, the motion
of critical points of the ordinary Green's function $G(z, \zeta)$ of
$\Omega$ as the pole $\zeta$ moves around. If $\Omega$ has
connectivity ${\texttt{g}}+1$ and the critical points are denoted
$z_1(\zeta),\dots,z_{\texttt{g}}(\zeta)$, we show first of all that
the $z_{j}(\zeta)$ stay within a compact subset $K$ of $\Omega$
(this result has also recently been obtain by A.~Solynin
\cite{Solynin}) and secondly that the limiting positions of the
$z_{j}(\zeta)$ as $\zeta$ approaches the boundary coincide with the
corresponding limiting positions of the zeros of the Bergman kernel.
In the doubly connected case (${\texttt{g}}=1$) it even turns out
that the domain is the disjoint union of the set of critical points
of the Green's function and the set of zeros of the Bergman kernel,
plus the common boundary of these two sets.

The method developed to prove the existence of the compact set $K$ is
remarkably related to (a new type of) Martin compactification and uses
many properties of the Bergman, Schiffer and Poisson kernels. All
these functionals depend in one sense or another on critical points of
the Dirichlet Green's function, and they moreover extend in a natural
way to sections of suitable bundles of the Schottky double of the domain,
which is a compact Riemann surface of genus ${\texttt{g}}$. An important role
here is played by the Schwarz function, which can be interpreted as being the coordinate
transition function between the front side and the back side of the Schottky double.

It is natural in this context to study the relationship between the hyperbolic
metric and level lines of the Green's function.
We show, for example, that the average, taken on a level line of the Green's
function, of the Green's potential $G^\mu$ of any
compactly supported measure $\mu$ is proportional to the total mass of $\mu$.
We also study level lines of the Green's function, and other harmonic functions,
from the point of view of hamiltonian mechanics and differential geometry. One main
observation then is
that such level lines are trajectories, as well as Hamilton-Jacobi geodesics, for
systems with the squared modulus of the gradient of the harmonic function as
hamiltonian.

Various other topics touched on are estimates of the Taylor
coefficients of the regular part of the complex Green's function in
terms of the distance to the boundary, and relationships between
these coefficients and the Poincar\'e metric.

A major part of the paper is devoted to a detailed study of the doubly connected case,
modeled by the annulus. In this situation all calculations can be done explicitly
because we have at our disposal the theory of modular forms (theta functions) and elliptic functions.
For example the dichotomy result mentioned in the beginning is proved in this section.
The Ramanujan formula for the derivatives of the basic Eisenstein
series $E_2$, $E_4$, $E_6$ are fundamental.

Some of the mentioned Taylor coefficients and Eisenstein series
transform under coordinate changes as different kinds of
connections. In the second half of the paper we study affine and
projective connections in quite some depth. In general, projective
connections play a unifying role in the paper, in fact almost all of
our work turns around projective connections, and to some extent
affine connections. One example is that they generate M\"obius
invariant differential operators, called Bol operators and denoted
$\Lambda_m$, for which boundary integral formulas, of Stokes type
but for higher order derivatives, can be proved. These are useful
for studying weighted Bergman spaces in general multiply connected
domains. There is one such Bergman space for each half-integer, and
the elements of the space should be thought of as differentials of
this order (actually integrals if the order is negative). The Bol
operator then is an isometry $\Lambda_m: A_m(\Omega)\to B_m(\Omega)$
from a Bergman type space $A_m(\Omega)$ of differentials of order
$\frac{1-m}{2}$ to a (weighted) Bergman space $B_m(\Omega)$
consisting of differentials of order $\frac{1+m}{2}$. Also
reproducing kernels for these, and some other, spaces are studied.

Along with connections, several questions related to curvature of, for example,
level lines
are discussed. The Study formula finds here its natural meaning.
A further aspect is that the operators $\Lambda_m$ all turn out to be symmetric
powers of a single one,
namely $\Lambda_2$.

In the final section of the paper we try to clarify in our setting the meaning
of a certain prepotential
for second order differential operators which has arisen in some recent physics papers.


\section{Generalities on function theory of
finitely connected plane domains}\label{sec:general}

\subsection{The Green's function and the Schottky double}

Let $\Omega\subset {\mathbb{C}}$ be a bounded domain of
finite connectivity, each boundary component being nondegenerate
(i.e., consisting of more than one point).   The oriented boundary
(having $\Omega$ on its left hand side) is denoted $\displaystyle
\partial \Omega=\Gamma= \Gamma_0 + \Gamma_1+ \cdots+ \Gamma_{{\texttt{g}}}$,
where $ \Gamma_0 $ is the outer  component. We shall in most of the
paper discuss only conformally invariant questions, and then we may
assume that each boundary component $\Gamma_j$ is a smooth analytic
curve. Alternatively, one may think of $\Omega$ as a plane bordered
Riemann surface.

Much of the functions theory on $\Omega$ is conveniently described
in terms of the Schottky double $\hat{\Omega}$ of $\Omega$. This is
the compact Riemann surface of genus ${\texttt{g}}$ obtained, when
$\Omega$ has analytic boundary, by welding $\Omega$ along
$\partial\Omega$, with an identical copy $\tilde\Omega$. Thus, as a
point set $\hat\Omega=\Omega \cup\partial\Omega\cup\tilde\Omega$.
The ``backside" $\tilde \Omega$ is provided with the opposite
conformal structure. This means that if $\tilde{z}\in\tilde\Omega$
denotes the point opposite to $z\in\Omega$ then the map
$\tilde{z}\mapsto \bar{z}$ is a holomorphic coordinate on
$\tilde\Omega$. The construction of the Schottky double generalizes
to any bordered Riemann surface and the result is always a symmetric
Riemann surface, i.e., a Riemann surface provided with an
antiholomorphic involution. In the Schottky double case this is  the
map $J:\Hat{\Omega}\to\Hat{\Omega}$ which exchanges $z$ and
$\tilde{z}$ and which keeps $\partial\Omega$ pointwise fixed.

The Schottky double of a plane domain $\Omega$ has a holomorphic
atlas consisting of only two charts: the corresponding coordinate
functions are the identity map $\phi_1:z\mapsto z$ on $\Omega$ and
the map $\phi_2:\tilde z\mapsto \bar z$ on $\tilde\Omega$. When
$\partial\Omega$ is analytic, as is henceforth assumed, both these
maps extend analytically across $\partial\Omega$ in $\hat{\Omega}$,
hence their domains of definitions overlap and the union covers all
$\hat{\Omega}$. Let
\begin{equation}\label{eq:Schwarztransition}
S=\phi_2 \circ \phi_1^{-1}
\end{equation}
be the coordinate transition function. It is analytic and defined in
a neighbourhood of $\partial\Omega$ in ${\mathbb{C}}$, and on
$\partial\Omega$ it satisfies
\begin{equation}\label{Schwarz}
S(z)=\bar{z} \quad (z\in \partial\Omega).
\end{equation}
Thus it is the {\it Schwarz function} \cite{Davis}, \cite{Davis2},
\cite{Shapiro} of $\partial\Omega$.

Differentiating (\ref{Schwarz}) gives
\begin{equation}\label{Sprime}
d\bar{z}=S'(z)dz \quad {\rm along\,\,} \partial \Omega.
\end{equation}
With $s$ an arc-length parameter along $\partial\Omega$ such that
$\Omega$ lies to the left as $s$ increases,
\begin{equation}\label{eq:T}
T(z)=\frac{dz}{ds}
\end{equation}
is the oriented unit tangent vector on $\partial\Omega$. By
(\ref{Sprime}) and since $|T(z)|=1$,
$$
S'(z)=\frac{1}{T(z)^2}, \quad z\in\partial\Omega.
$$
It follows that $1/T(z)$ extends analytically to a neighbourhood of
$\partial\Omega$ and that it gives a selection of a square-root of
$S'(z)$. This is an important observation because it means that on
the Schottky double of any plane domain there is a canonical choice
of square-root of the canonical bundle, i.e., there is canonical
meaning of the concept of a differential of order one-half, and
hence of a differential of any half-integer order.

A function $f$ on $\hat\Omega$ is most conveniently described as a
pair of functions $f_1$, $f_2$ on $\Omega$, continuously extendable
to $\partial\Omega$, such that
$$
f_1(z)=\overline{f_2(z)} \quad (z\in\partial\Omega).
$$
The formal relations to $f$ in terms of the coordinates functions
$\phi_1$ and $\phi_2$ above are
$$
\begin{cases}
f_1 =f\circ \phi_1^{-1},  \\
f_2 =c\circ f\circ \phi_2^{-1}\circ c,
\end{cases}
$$
where $c$ denotes complex conjugation. It follows, for example, that
$f$ is meromorphic if and only if $f_1$ and $f_2$ are meromorphic.
With the same kind of identifications, a differential of order $m$
on $\hat\Omega$ is represented by a pair of functions $f_1$, $f_2$
on $\Omega$ such that
$$
f_1(z)dz^m=\overline{f_2(z)dz^m}\quad {\rm along\,\,} \partial
\Omega.
$$
This is to be interpreted as $f_1(z)T(z)^m=\overline{f_2(z)T(z)^m}$,
or
\begin{equation}\label{eq:halforderdiff}
f_1(z)=\overline{f_2(z)T(z)^{2m}} \quad (z\in\partial\Omega).
\end{equation}
Clearly this makes unambiguous sense for any
$m\in\frac{1}{2}{\mathbb{Z}}$.

The Green's function $G(z,\zeta)$ of $\Omega$ is, as a function of
$z$ for fixed $\zeta\in\Omega$, defined by the properties
$$
G(z,\zeta)=-\log |z-\zeta| + {\rm harmonic} \quad z\in \Omega,
$$
$$
G(z,\zeta)=0, \quad z\in\partial\Omega.
$$
It is symmetric in $z$ and $\zeta$, $G(z,\zeta)=G(\zeta,z)$, and it
extends to the Schottky double as an``odd" function in each
variable, for example, $G(J(z),\zeta)=-G(z,\zeta)$.

The above extension of the Green's function makes it a special case of
a fundamental potential which exists on any compact Riemann surface,
and which is a suitable starting point for discussing the classical
function and differentials: for any three distinct points $a, b, w$
on a compact Riemann surface $M$ there exists a unique function of
the form
\begin{equation}\label{eq:V}
V(z)= V(z,w;a,b)= -\log|z-a|+ \log|z-b|+ {\rm harmonic},
\end{equation}
normalized by $V(w,w;a,b)= 0$. It has the symmetries
\begin{equation}\label{eq:symm}
V(z,w;a,b)=V(a,b;z,w)=-V(z,w;b,a)
\end{equation}
and the transitivity property
\begin{equation}\label{eq:trans}
V(z,w;a,b)+V(z,w;b,c)=V(z,w;a,c).
\end{equation}
See below for explanations, and also \cite{Schiffer-Spencer}, Ch.~4.
If The Riemann surface is symmetric, with involution $J$, then
\begin{equation}\label{eq:VVJ}
V(z,w;a,b)=V(J(z),J(w);J(a),J(b)).
\end{equation}

\begin{ex}\label{ex:cross-ratio}
In the case of the Riemann sphere, $M={\mathbb{P}}$, we have
$$
V(z,w;a,b)=-\log|(z:w:a:b)| =-\log
\big|\frac{(z-a)(w-b)}{(z-b)(w-a)}\big|,
$$
$(z:w:a:b)$ denoting the classical cross-ratio.
\end{ex}

With $M=\Hat{\Omega}$, the Green function is given in terms of $V$
by
\begin{equation}\label{eq:GVJ}
G(z,\zeta)=\frac{1}{2}V(z,J(z); \zeta, J({\zeta}))= V(z,w; \zeta,
J({\zeta})),
\end{equation}
where $w$ is an arbitrary point on $\partial\Omega$ and where the
second equality follows from (\ref{eq:symm}), (\ref{eq:trans}),
(\ref{eq:VVJ}). Cf. also \cite{Fay}, p.~125f.

The existence of $V(z,w;a,b)$ in general is immediate from classical
potential theory, see e.g. \cite{Schiffer-Spencer},
\cite{Farkas-Kra} or, more generally, from
Hodge theory. In fact, $V$ solves the Poisson equation
$-d^*dV=2\pi(\delta_a-\delta_b)$ on $M$, where the star is the Hodge
star and the Dirac measures in the right member are to be considered
as $2$-form currents; the solution exists because $\int_M
2\pi(\delta_a-\delta_b)=0$ and it is unique up to an additive
constant.

From $V(z,w;a,b)$ much of the classical function theory on $M$ can
be built up. For example,
\begin{equation}\label{eq:upsilonV}
\upsilon_{a-b}(z)=-2\frac{\partial V (z,w;a,b)}{\partial z}dz
=-dV(z)-\I ^*dV(z)
\end{equation}
$$
= \frac{dz}{z-a}-\frac{dz}{z-b}+{\rm analytic}
$$
is the unique abelian differential of the third kind with poles of
residues $\pm 1$ at $z=a,b$ and having purely imaginary periods. The
subscript in $\upsilon_{a-b}$ should be thought of as a divisor, and
the definition extends to any divisor of degree zero. Conversely, we
retrieve $V$ from $\upsilon_{a-b}$ by
\begin{equation}\label{eq:Vupsilon}
V(z,w;a,b)=-\Re \int_w^z \upsilon_{a-b},
\end{equation}
from which the symmetries (\ref{eq:symm}) and transitivity property
(\ref{eq:trans}) of $V$ follow, using Riemann's bilinear relations
for one of the symmetries.

From $V(z,w;a,b)$ one can also construct the
$2{\texttt{g}}$-dimensional space of harmonic differentials on $M$ by
considering the conjugate periods. By (\ref{eq:Vupsilon}), the
harmonic differentials  $dV=-\Re\upsilon_{a-b}$ and
$^*dV=-\Im\upsilon_{a-b}$ do not depend on $w$. The first one is
exact, while the second has certain periods, which depend on $a$,
$b$: if $\gamma$ is a closed curve on $M$ then the function
$$
\phi_\gamma(a,b) =\frac{1}{2\pi}\int_\gamma  \,^*dV(\cdot, w;a,b)
$$
is, away from $\gamma$, harmonic in $a$ and $b$  and makes a unit
jump as $a$ (or $b$) crosses $\gamma$. It follows that, keeping $b$
fixed, $d\phi_\gamma (\cdot,b)$ extends to a harmonic differential
on $M$ with periods given by
$$
\int_\sigma d\phi_\gamma (\cdot,b) =\sigma \times \gamma,
$$
where $\sigma \times \gamma$ denotes the intersection number of
$\sigma$ and $\gamma$. Choosing $\gamma$ among the curves in a
canonical homology basis gives the standard harmonic differentials.

As a further aspect, the right member in (\ref{eq:Vupsilon}) can be
written as a Dirichlet integral:
\begin{equation}\label{eq:VVV}
V(z,w;a,b)=-\frac{1}{2\pi}\int_M dV(\cdot, c;z,w)\wedge
^*dV(\cdot,c;a,b)
\end{equation}
$$
=\frac{1}{4\pi}\Im\int_M \upsilon_{z-w} \wedge\bar{\upsilon}_{a-b},
$$
where $c\in M$ is an arbitrary point. Thus $V$ reproduces itself in
a certain sense, and the equation also expresses that $V(z,w;a,b)$,
besides being the potential of the charge distribution
$\delta_a-\delta_b$ when considered as a function of merely $z$,
also is the mutual energy of the two charge distributions
$\delta_a-\delta_b$ and $\delta_z-\delta_w$. The first equality in
(\ref{eq:VVV}) follows by partial integration, and the second from
the definition of $\upsilon_{a-b}$. For the Green's function the
corresponding equation is
$$
G(z,\zeta)= \frac{1}{2\pi}\int_\Omega dG(\cdot, z)\wedge
^*dG(\cdot,\zeta).
$$

We recall that $\upsilon_{a-b}$ has $2{\texttt{g}}$ zeros (since it
has $2$ poles). With $M=\Hat{\Omega}$ and $a=\zeta$, $b=J({\zeta})$,
half of the zeros are on $\Omega$, and by (\ref{eq:GVJ}),
(\ref{eq:upsilonV}) these are exactly the critical points of
$G(z,\zeta)$ (the points where the gradient vanishes). Hence the
Green's function has exactly ${\texttt{g}}$ critical points on
$\Omega$.


\subsection{The Schottky-Klein prime function}

The harmonic theory on a compact Riemann surface is simple and
intuitive, as indicated above. For the holomorphic theory one
usually prefers to work with abelian differentials of the third kind
normalized, not as $\upsilon_{a-b}$, but so that, in terms of a
canonical homology basis, half of the periods vanish. Following
\cite{Yamada} and \cite{Krichever-Marshakov-Zabrodin} we shall, in
the case of a Schottky double of a planar domain $\Omega$, choose a
canonical homology basis $\displaystyle \{
\alpha_1,\dots,\alpha_{{\texttt{g}}}, \,\beta_1,\dots,\beta_{{\texttt{g}}}\}$
such that $\alpha_j$ goes from $\Gamma_0$ to $\Gamma_j$ on $\Omega$
and back to $\Gamma_0$ along the same track on the back-side
$\Tilde{\Omega}$, and such that $\beta_j=\Gamma_j$ ($j=1,\dots,
{\texttt{g}}$). Thus the basis is symmetric with respect to the
involution $J$, more precisely $J(\alpha_j)= -\alpha_j$,
$J(\beta_j)= \beta_j$.

We denote by $\omega_{a-b}$ the abelian differential of the third
kind with the same singularities as $\upsilon_{a-b}$ but normalized
so that the $\alpha_j$-periods vanish:
\begin{equation}\label{eq:normomega}
\int_{\alpha_j} \omega_{a-b}=0,\quad j=1,\dots,{\texttt{g}}.
\end{equation}
This makes sense only for $a,b\notin\alpha_j$, and with the
$\alpha_j$ being fixed curves. Hence $\omega_{a-b}$ is less
canonical than $\upsilon_{a-b}$ but it has the advantage of
depending analytically on $a$, $b$, whereas for $\upsilon_{a-b}$ the
dependence is only harmonic in general.

\begin{rem} As to notation, the differential we denote by $\omega_{a-b}$
is in Fay \cite{Fay}, Schiffer-Spencer \cite{Schiffer-Spencer},
Farkas-Kra \cite{Farkas-Kra} denoted, respectively, $\omega_{a-b}$,
$-d\omega_{ab}$, $\tau_{ab}$. For our $\upsilon_{a-b}$ the
corresponding list is: $\Omega_{a-b}$, $-d\Omega_{ab}$,
$\omega_{ab}$. Also, Fay \cite{Fay}, and Hejhal \cite{Hejhal} use a
homology basis with switched roles between the $\alpha_j$ and
$\beta_j$ curves, but, as noticed by A.~Yamada \cite{Yamada}, the
present choice has certain advantages (see
Lemma~\ref{lem:omegaupsilon} below).
\end{rem}

The integral $\int_w^z \omega_{a-b}$, with unspecified path of
integration, is locally holomorphic in all variables, but
multivalued. To cope with the $2\pi\I$ indeterminacy coming from the
poles one may form the exponential. In the case of the Riemann
sphere this simply gives the cross-ratio between $z$, $w$, $a$, $b$:
$$
\exp \int_w^z \omega_{a-b}=(z:w:a:b).
$$
For Riemann surfaces of genus ${\texttt{g}}>0$, the exponential
remains multivalued, with multiplicative periods (cf.
\cite{Farkas-Kra}), Ch.III.9). However, one can still write it as a
kind of cross-ratio if one is willing to accept further
multivaluedness:
\begin{equation}\label{eq:expEEEE}
\exp \int_w^z \omega_{a-b}=\frac{E(z,a)E(w,b)}{E(z,b)E(w,a)}.
\end{equation}
Here $E(z,\zeta)$ is the Schottky-Klein prime function (prime form),
which should be regarded as a differential of order $-\frac{1}{2}$
in each of the variables, but as such still has multiplicative
periods. Near the diagonal it behaves like $z-\zeta$. The exact
definition of $E(z,\zeta)$ is usually given in terms of theta
functions on the Jacobi variety, see \cite{Fay}, \cite{Hejhal}, \cite{Mumford}
and below. For the Schottky doubles of a plane region there is also a
representation in terms of the Schottky uniformization of the
domain, see for example \cite{Crowdy2}.

It should be remarked that, in the Schottky double case, the kind of
differential of order $-\frac{1}{2}$ referred to for $E(z,\zeta)$
has no representation in the form (\ref{eq:halforderdiff}) (with
$m=-1$). The bundle of half-order differentials defined by
(\ref{eq:halforderdiff}) is ``even'' and does not allow any global
holomorphic section, whereas the one needed for $E(z,\zeta)$ should
be ``odd'', which does allow for a holomorphic section. The
definition of such a bundle requires a finer atlas than the one
consisting of only $\Omega$ and $\tilde{\Omega}$ for its
representation. Of the $2^{2{\texttt{g}}}$ bundles of half-order
differentials, $2^{{\texttt{g}}-1}(2^{{\texttt{g}}}-1)$ are odd and
$2^{{\texttt{g}}-1}(2^{{\texttt{g}}}+1)$ are even, see \cite{Hejhal}.

Now, quite remarkably, in case the Riemann surface is the Schottky
double of a plane domain the two abelian differentials
$\omega_{a-b}$ and $\upsilon_{a-b}$ coincide when $a$ and $b$ are
symmetrically opposite points:

\begin{lem}\label{lem:omegaupsilon}
For $M=\Hat{\Omega}$ and with canonical homology basis chosen as
above,
$$
\omega_{\zeta-J(\zeta)}=\upsilon_{\zeta-J(\zeta)}.
$$
\end{lem}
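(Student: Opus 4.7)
The plan is to show that $\upsilon_{\zeta-J(\zeta)}$ itself already satisfies the normalization (\ref{eq:normomega}) characterizing $\omega_{\zeta-J(\zeta)}$; uniqueness then yields equality. Uniqueness is automatic once one notes that two abelian differentials of the third kind with identical polar parts and vanishing $\alpha_j$-periods differ by a holomorphic differential with vanishing $\alpha_j$-periods, and such a holomorphic differential must vanish identically. So the whole task reduces to proving $\int_{\alpha_j}\upsilon_{\zeta-J(\zeta)}=0$ for $j=1,\dots,{\texttt{g}}$.

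Fix $w\in\partial\Omega$ (so $J(w)=w$) and set $V(z)=V(z,w;\zeta,J(\zeta))$. By (\ref{eq:GVJ}), $V$ coincides with the Green's function $G(\cdot,\zeta)$, which extends to $\hat\Omega$ as an odd function under $J$, i.e.\ $V\circ J=-V$. Using (\ref{eq:upsilonV}) the period splits as
$$
\int_{\alpha_j}\upsilon_{\zeta-J(\zeta)} = -\int_{\alpha_j}dV - \I\int_{\alpha_j}{}^*dV.
$$
The first integral vanishes since $V$ is single-valued on $\hat\Omega\setminus\{\zeta,J(\zeta)\}$ (the singularities are of the real logarithmic type) and $\alpha_j$ is a closed curve avoiding both points.

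For the second integral I would exploit the symmetries built into the basis, namely $J(\alpha_j)=-\alpha_j$, together with the identity $J^*({}^*dV)={}^*dV$. The latter follows from $J^*dV=d(V\circ J)=-dV$ combined with the fact that on a Riemann surface the Hodge star on $1$-forms anticommutes with pullback under an antiholomorphic map: a $(1,0)$-form $\eta$ has ${}^*\eta=-\I\eta$, whereas $J^*\eta$ is of type $(0,1)$ and ${}^*(J^*\eta)=\I J^*\eta$. Consequently
$$
\int_{\alpha_j}{}^*dV = \int_{\alpha_j}J^*({}^*dV) = \int_{J(\alpha_j)}{}^*dV = -\int_{\alpha_j}{}^*dV,
$$
forcing $\int_{\alpha_j}{}^*dV=0$.

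The only step requiring real care is the Hodge-star/antiholomorphic-pullback sign check; once that identity is in hand, the conclusion is essentially forced by the two oddness properties $V\circ J=-V$ and $J(\alpha_j)=-\alpha_j$, which seem to have been designed precisely with this lemma in mind.
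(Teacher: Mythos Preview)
Your proof is correct and follows essentially the same route as the paper: both reduce the claim to showing $\int_{\alpha_j}{}^*dV(\cdot,w;\zeta,J(\zeta))=0$ (the $dV$ part being exact), and both deduce this from the $J$-symmetry of $V$ together with $J(\alpha_j)=-\alpha_j$. Your explicit verification that $J^*$ anticommutes with the Hodge star on $1$-forms, hence $J^*({}^*dV)={}^*dV$, is exactly the computation the paper leaves implicit when it invokes ``the symmetry properties of $V$ with respect to $J$''.
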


\begin{rem}
The lemma does not hold if the homology basis is chosen in a
different way, for example with switched roles between the
$\alpha_j$ and $\beta_j$ curves. Note also that even though
$\omega_{a-b}$ makes a jump by $2\pi\I$ as $a$ or $b$ crosses
$\alpha_j$, there is no such jump for $\omega_{\zeta-J(\zeta)}$
because $\zeta$ and $J(\zeta)$ cross $\alpha_j$ simultaneously and
the two contributions cancel.
\end{rem}

\begin{proof}
One simply has to notice that $\upsilon_{\zeta-J(\zeta)}$ satisfies
the normalization (\ref{eq:normomega}) of $\omega_{\zeta-J(\zeta)}$.
Expressed in terms of $V$, since $dV$ already is exact, the
statement to be proven becomes, for any $w\in\partial\Omega$,
$$
\int_{\alpha_j} \,^*dV(\cdot, w;\zeta,J(\zeta))=0, \quad
(j=1,\dots,{\texttt{g}}).
$$
That these periods vanish follows from the symmetry properties
(\ref{eq:VVV}), (\ref{eq:VVJ}) of $V$ with respect to $J$.
\end{proof}

For a general Riemann surface,  the differential $\omega_{a-b}$ can
be recovered from the prime form as a logarithmic derivative of
(\ref{eq:expEEEE}):
\begin{equation}\label{eq:omegadlogE}
\omega_{a-b}(z)= d\log \frac{E(z,a)}{E(z,b)}.
\end{equation}
From this one sees clearly that $E(z,\zeta)$ depends on the homology
basis, since $\omega_{a-b}$ does. The Green's function of a planar
domain $\Omega$ is most directly related to $\upsilon_{a-b}$ on
$\Hat{\Omega}$ as in (\ref{eq:GVJ}), (\ref{eq:upsilonV}), but in
view of Lemma~\ref{lem:omegaupsilon} one can equally well use
$\omega_{a-b}$. Therefore, (\ref{eq:omegadlogE}) gives the following
expression of the Green's function of $\Omega$ in terms of the prime
form on $M=\Hat{\Omega}$.
$$
G(z,\zeta)=-\log|\frac{{E}(z,\zeta)}{{E}(z,J(\zeta))}| \quad (z,
\zeta\in \Omega).
$$
See \cite{Yamada} for further details, and also \cite{Crowdy2},
\cite{Krichever-Marshakov-Zabrodin} for other aspects.

We introduce next the harmonic measures $u_j$,
$j=1,\dots,{\texttt{g}}$, i.e., the harmonic functions in $\Omega$
defined by having the boundary values $u_j=\delta_{kj}$ on
$\Gamma_k$. It is easy to see that their differentials $du_j$ extend
to the Schottky double as everywhere harmonic differentials with
$$
\int_{\alpha_k} du_j = 2\delta_{kj},\quad \int_{\alpha_k} \,^*du_j
=0.
$$
Here the second equation is a consequence of the symmetry of the
extended differential under $J$. Thus, the
$d{\mathcal{U}}_j=\frac{1}{2}(du_j+\I ^*du_j)$
($j=1,\dots,{\texttt{g}}$) constitute the canonical basis of abelian
differentials of the first kind (everywhere holomorphic
differentials), the period matrix with respect to the $\alpha_k$
curves being the identity matrix. For the $\beta_k$-periods we have
$$
\int_{\beta_k} du_j = 0, \quad \int_{\beta_k} \,^*du_j
=\int_{\partial\Omega}u_k \,^*du_j =\int_{\Omega}du_k
\wedge\,^*du_j,
$$
where the latter make up a positive definite matrix. On setting
$$
\tau_{kj}= \frac{1}{2}\int_{\beta_k} du_j +\I\,^*du_j
$$
we thus have $\Re \tau_{kj}=0$ and that the matrix $(\Im \tau_{kj})$
is positive definite. With ${\mathcal{U}}_j(z)=\int^z
\frac{1}{2}(du_j+\I ^*du_j) =\int^z \frac{\partial u_j}{\partial
z}dz$ denoting the corresponding abelian integrals (multivalued),
the map
$$
z\mapsto {\mathcal{U}}(z) =({\mathcal{U}}_1(z),\dots
{\mathcal{U}}_{{\texttt{g}}}(z))
$$
defines, up to a shift, the Abel map into the Jacobi variety
${\mathbb{C}}^{{\texttt{g}}}/({\mathbb{Z}}^{\texttt{g}}+\tau{\mathbb{Z}}^{\texttt{g}})$,
$\tau=(\tau_{kj})$.

At this point we can make the definition of the Schottky-Klein prime
function slightly more precise. The first order theta function with
(half-integer) characteristics
$
\begin{bmatrix}
\delta\\
\epsilon
\end{bmatrix},
$
where $\delta, \epsilon \in \frac{1}{2}{\mathbb{Z}}^{\texttt{g}}$ are
row vectors, is defined by
\begin{equation}\label{eq:theta}
\vartheta
\begin{bmatrix}
\delta\\
\epsilon
\end{bmatrix}(w)
=\vartheta
\begin{bmatrix}
\delta\\
\epsilon
\end{bmatrix}(w;\tau)
= \sum_{m\in \BZ^{\texttt{g}}}
\exp\left[{\I\pi(m+\delta)\tau(m+\delta)^{t}+ 2\pi\I(w+ \epsilon)(m+
\delta)^{t}}\right]
\end{equation}
for $w=(w_1,\dots,w_{\texttt{g}})\in {\mathbb{C}}^{\texttt{g}}$. The
superscript $t$ denotes transposition of a matrix. To define the
prime form the characteristics should first of all be chosen to be
odd, i.e., so that $4\delta\epsilon^t$ is an odd number (e.g.,
$\delta=(\frac{1}{2}, 0\dots,0)$, $\epsilon=(\frac{1}{2},
0\dots,0)$). For simplicity, set $\vartheta_*=\vartheta
\begin{bmatrix}
\delta\\
\epsilon
\end{bmatrix}$
whenever such a choice of $\delta$, $\epsilon$ has been made, and
introduce
$$
c_j=\frac{\partial \vartheta_*}{\partial w_j}(0).
$$
The $\delta$ and $\epsilon$ should in addition be chosen to be
non-singular, i.e., so that not all of the constants $c_j$ vanish.
This can always be done, see \cite{Fay}. Then the Schottky-Klein
prime form is, when considered as defined on $\Hat{\Omega}$,
$$
E(z,\zeta)= \frac{\vartheta_* ({\mathcal{U}}(z)-{\mathcal{U}}(\zeta))}
{\sqrt{\sum_{j=1}^{{\texttt{g}}}c_j d{\mathcal{U}}_j(z)}
\sqrt{\sum_{j=1}^{{\texttt{g}}}c_j d{\mathcal{U}}_j(\zeta)}}.
$$
See \cite{Krichever-Marshakov-Zabrodin} (Appendix) for further
details using the present point of view.


\section{Critical points of the Green's function}

\subsection{The Bergman and Poisson kernels}

The Bergman and Schiffer kernels \cite{Bergman},
\cite{Schiffer-Spencer}, $K(z, \zeta)$ and $L(z, \zeta)$
respectively, will be discussed in some detail later on. For the
moment we just recall their representations in terms of the Green's
function and the potential $V(z,w;a,b)$ in (\ref{eq:V}):
\begin{equation}\label{eq:K}
K(z, \zeta) = -\frac{2}{\pi}\frac{\partial^2 G}{\partial z \partial
\bar\zeta}(z,\zeta) =\, \, \frac{1}{\pi}\frac{\partial^2 V}{\partial
z\partial \bar \zeta}(z,J(z); \zeta,J(\zeta)),
\end{equation}
\begin{equation}\label{eq:L}
 L(z, \zeta)= -\frac{2}{\pi}\frac{\partial ^2
G}{\partial z \partial \zeta}(z,\zeta) =\, \,
\frac{1}{\pi}\frac{\partial^2 V}{\partial z
\partial \zeta}(z,J(z); \zeta ,J(\zeta)).
\end{equation}
By the symmetry of $G(z,\zeta)$ (or $V$) we have
$L(z,\zeta)=L(\zeta, z)$ and $K(z,\zeta)=\overline{K(\zeta,z)}$.

Since $G(z,\zeta)=0$ for $z\in\partial\Omega$, $\zeta\in \Omega$,
\begin{equation}\label{eq:KL}
L(z,\zeta)dz+\overline{K(z,\zeta)dz}=0
\end{equation}
along $\partial\Omega$ (with respect to $z$). This means that, for
any fixed $\zeta$, the pair $K(z,\zeta)dz$, $L(z,\zeta)dz$ combines
into a meromorphic differential on $\Hat\Omega$. It has a pole of
order two residing in $L(z,\zeta)$:
\begin{equation}\label{eq:l}
L(z, \zeta)= \frac{1}{\pi}\frac{1}{(z- \zeta)^2}- \ell (z, \zeta),
\end{equation}
where $\ell (z,\zeta)$ (the ``$\ell$-kernel'') is regular in both
variables. Consequently, $K(z,\zeta)dz$, $L(z,\zeta)dz$ have
altogether $2{\texttt{g}}$ zeros. For $\zeta\in\partial\Omega$ the
zeros are, by (\ref{eq:KL}) with switched roles between $z$ and
$\zeta$, equally shared: ${\texttt{g}}$ zeros for $K(z,\zeta)dz$ and
$L(z,\zeta)dz$ at the same points, and no zeros on $\partial\Omega$
as we shall see in the next section.  A further consequence of
(\ref{eq:KL}) is that when $\partial\Omega$ is analytic, the kernels
$K(z,\zeta)$, $L(z,\zeta)$ have analytic extensions across
$\partial\Omega$ in ${\mathbb{C}}$.

Using the symmetries of $K(z,\zeta)$ and $L(z,\zeta)$, and
(\ref{eq:KL}) one finds that the double differentials
$K(z,\zeta)dzd\bar{\zeta}$, $L(z,\zeta)dzd{\zeta}$ are ``real'' on
the boundary:
$$
L(z,\zeta)dzd{\zeta}\in {\mathbb{R}},\quad K(z,\zeta)dzd\bar{\zeta}\in
{\mathbb{R}}\quad (z,\zeta \in\partial\Omega).
$$
The precise meaning of for example the first equation is that
$L(z,\zeta)T(z)T(\zeta)\in {\mathbb{R}}$, where $T(z)$ is the tangent
vector at $z\in\partial\Omega$ (see (\ref{eq:T})).

The Poisson kernel of $\Omega$ is the normal derivative of the Green
function when one of the variables is on the boundary:
$$
P(z, \zeta)= -\frac{1}{2\pi}\frac{\partial G}{
\partial n_z}(z,\zeta),\quad z\in \partial\Omega,\,\zeta\in \Omega.
$$
Here $\frac{ \partial }{\partial n_z}$ denotes the outward normal
derivative at $z\in\partial\Omega$. With $ds=ds_z$ denoting
arc-length with respect to $z$, the definition may also be written
in differential form in several ways, for example (with $d=d_z$)
$$
P(z, \zeta)ds_z =-\frac{1}{2\pi} {}^*d
G(z,\zeta)=-\frac{1}{\pi}\Im\frac{\partial G}{\partial z}(z,\zeta)dz
$$
$$
=-\frac{1}{2\pi}\Im\frac{\partial V}{\partial z}(z,J(z);\zeta,
J({\zeta}))dz =-\frac{1}{2\pi}\Im\upsilon_{\zeta-J({\zeta})}(z).
$$


\subsection{Critical points}

In this subsection and the next we show that, as the pole moves
around, the set of critical points of the Green's function of
$\Omega$ stay within a compact subset of $\Omega$, and that the
limiting positions, as the pole tends to $\partial\Omega$, coincide
with the zero set of the Bergman kernel in this boundary limit. The
latter result was the main motivation for the present work. Related 
results have been obtained by S.~Bell \cite{Bell} (section~30) and
A.~Solynin \cite{Solynin}.

For fixed $z \in\partial \Omega$, the map $\zeta\mapsto  P(z,
\zeta)$ is harmonic and strictly positive in $\Omega$ and vanishes
on $\partial \Omega \setminus \lbrace z\rbrace$. It extends, when
$\partial\Omega$ is analytic, to a harmonic function in a
neighborhood (in ${\mathbb{C}}$) of $\partial \Omega$ except for a
pole at $\zeta=z$. Since $P(z,\cdot)$ attains its minimum in $\bar
\Omega \setminus \lbrace z\rbrace$ on $\partial \Omega$, the Hopf
maximum principle \cite{Protter-Weinberger} shows that
$\displaystyle\frac{\partial P}{
\partial n_\zeta}(z,\zeta)< 0$ for all $\zeta \in \partial \Omega \setminus
\lbrace z\rbrace$. Thus
\begin{equation}\label{1}
\frac{\partial^2 G}{\partial n_z \partial  n_\zeta}(z,\zeta)>
0,\quad z, \zeta \in \partial \Omega,\quad z \neq \zeta.
\end{equation}

The fact that this double normal derivative is nonnegative can be
more directly understood by interpreting the left member of
(\ref{1}) as a double difference quotient: with differences
$\triangle z$, $\triangle\zeta$ pointing in the normal direction
into $\Omega$ from $z,\zeta\in\partial\Omega$, respectively, we have
$$
\frac{\partial^2 G(z, \zeta)}{\partial n_z
\partial n_\zeta}=
$$
$$
=\lim_{|\triangle z|, |\triangle \zeta| \to 0}\frac{ G(z+\triangle
z, \zeta+\triangle\zeta)- G(z+\triangle z, \zeta)- G(z,
\zeta+\triangle\zeta) + G(z, \zeta)}{|\triangle z| |\triangle \zeta|
}.
$$
This is obviously nonnegative because all terms in the numerator
vanish except the first: $G(z+\triangle z, \zeta+\triangle\zeta)>0$.

Complementary to (\ref{1}) we have
$$
\frac{\partial^2 G}{\partial s_\zeta \partial  s_z}(z,\zeta)
=\frac{\partial^2 G}{\partial s_\zeta \partial  n_z}(z,\zeta)
=\frac{\partial^2 G}{\partial n_\zeta \partial  s_z}(z,\zeta) =0
\quad (z, \zeta \in \partial \Omega,\,\, z \neq \zeta).
$$
It follows that, for $z,\zeta\in\partial\Omega$, $z\ne\zeta$,
$$
\frac{\partial^2 G}{\partial z  \partial \zeta}(z,\zeta) \neq 0,
\quad \frac{\partial^2 G}{\partial z  \partial \bar{\zeta}}(z,\zeta)
\neq 0, \quad
\frac{\partial^2 G}{\partial z  \partial
n_\zeta}(z,\zeta) \neq 0,
$$
because these derivatives contain derivations in the normal
direction in each of the variables. In view of the singularity of
type $\displaystyle {1}/{(z- \zeta)^2}$ at $z= \zeta$ and of the
continuity of $\displaystyle \frac{\partial^2 G}{\partial z
\partial \zeta}$ and $\displaystyle \frac{\partial^2 G}{\partial z
\partial \bar\zeta}$ as mappings from a neighborhood of $\displaystyle
\partial \Omega\times\partial \Omega \subset \BC \times \BC$ to the
Riemann sphere it is clear that the above inequalities persist to
hold for $z=\zeta$, and also that the quantities are bounded away
from zero:
\begin{equation}\label{2}
\big| \frac{\partial^2 G}{\partial z \, \partial \zeta}\big| \geq
c>0, \quad \big|\frac{\partial^2 G}{\partial z\,
\partial\bar\zeta}\big| \geq c > 0
\end{equation}
in a full  neighborhood of $\displaystyle \partial \Omega\times
\partial \Omega$ and for some constant $c> 0$.
Note that (\ref{2}) says that $K(z,\zeta)$, $L(z,\zeta)$ are bounded
away from zero in a neighborhood of
$\partial\Omega\times\partial\Omega$ when $\partial\Omega$ is
analytic. With $z$ in a neighborhood of $\partial\Omega$ and
$\zeta\in\partial\Omega$ we also infer
\begin{equation} \label{3}
\big|\frac{\partial^2 G}{\partial z\, \partial n_\zeta}\big| \geq c
> 0.
\end{equation}

Since $\displaystyle \frac{\partial G}{\partial z}(z, \zeta)= 0$ for
$\displaystyle \zeta\in \partial \Omega$ we obtain, on integrating
(\ref{3}) with respect to $\zeta$ in the normal direction,
$$
{\vert\frac{\partial G}{\partial z}(z, \zeta)\vert
\geq c \,{\rm dist} (\zeta, \partial \Omega)}
$$
for $z, \zeta$ in a neighborhood of $\partial \Omega$ and for some
$c>0$. In particular there exists a compact set $K\subset \Omega$
such that
\begin{equation}\label{eq:grad}
\displaystyle \frac{\partial G}{\partial z}(z, \zeta) \neq 0
\end{equation}
for all $z,\zeta \in \Omega\setminus K$. Now we are ready to
conclude the following, also obtained (using slightly different
arguments) by A.~Solynin \cite{Solynin}.

\begin{thm}\label{thm:critical}
Let $\Omega\subset \BC$ be a bounded finitely connected domain such
that each component of $\BC\setminus \Omega$ consists of a least two
points. Then there exists a compact set $K\subset \Omega$ such that
\begin{eqnarray}\label{4}
\displaystyle{\frac{\partial G}{\partial z}(z, \zeta)}& \neq 0 ,& \,
z\in \Omega\setminus K,\quad\zeta\in \Omega,\\
\displaystyle{\frac{\partial^2 G}{\partial z  \partial
\zeta}(z,\zeta)}&
\neq 0 ,&\, z,\zeta \in \Omega\setminus K,\\
\displaystyle{\frac{\partial^2 G}{\partial z  \partial \bar
\zeta}(z,\zeta)}& \neq 0,&\,  z,\zeta \in \Omega\setminus K.
\end{eqnarray}
\end{thm}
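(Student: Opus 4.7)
The plan is to assemble the estimates derived in the paragraph immediately preceding the theorem. The second and third lines of the theorem follow almost at once: (\ref{eq:L}) and (\ref{eq:K}) express the second-order derivatives $\partial^{2}G/\partial z\,\partial\zeta$ and $\partial^{2}G/\partial z\,\partial\bar{\zeta}$ as constant multiples of the Schiffer and Bergman kernels, and (\ref{2}) says that these are bounded below in absolute value by a positive constant in some full neighbourhood $W$ of $\partial\Omega\times\partial\Omega$. Choosing a tubular neighbourhood $U$ of $\partial\Omega$ with $U\times U\subset W$ and setting $K_{1}=\overline{\Omega}\setminus U$ produces a compact subset of $\Omega$ outside of which both second derivatives vanish nowhere.

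For the first line of the theorem, what was established before the statement is the estimate $|\partial G/\partial z(z,\zeta)|\geq c\,\mathrm{dist}(\zeta,\partial\Omega)$, valid only for $z,\zeta$ \emph{both} close to $\partial\Omega$. The remaining task is therefore to rule out a critical point $z_{n}\to z_{\ast}\in\partial\Omega$ occurring while the pole $\zeta_{n}$ stays in a compact subset of $\Omega$. I would argue by contradiction: suppose $(z_{n},\zeta_{n})\in\Omega\times\Omega$ satisfy $\partial G/\partial z(z_{n},\zeta_{n})=0$ and $z_{n}\to z_{\ast}\in\partial\Omega$, and split into two cases according to the behaviour of $\zeta_{n}$. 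If some subsequence $\zeta_{n_{k}}\to\zeta_{\ast}\in\Omega$, then $G(\cdot,\zeta_{\ast})$ is a positive harmonic function in $\Omega\setminus\{\zeta_{\ast}\}$ vanishing on the analytic curve $\partial\Omega$, so the Hopf boundary point lemma \cite{Protter-Weinberger} forces the outward normal derivative at $z_{\ast}$ to be strictly nonzero; since $G(\cdot,\zeta_{\ast})\equiv 0$ on $\partial\Omega$ the tangential derivative vanishes, and hence $\partial G/\partial z(z_{\ast},\zeta_{\ast})\neq 0$. Continuity of $\partial G/\partial z$ up to the analytic boundary then contradicts $\partial G/\partial z(z_{n_{k}},\zeta_{n_{k}})=0$. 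In the remaining case $\zeta_{n}\to\partial\Omega$, both $z_{n}$ and $\zeta_{n}$ eventually lie in the boundary neighbourhood where the displayed inequality applies, and $\mathrm{dist}(\zeta_{n},\partial\Omega)>0$ contradicts the vanishing directly.

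Taking $K$ to be the union of $K_{1}$ and the compact set produced by the first-assertion argument gives a single compact set for which all three lines of the theorem hold. The main obstacle is not computational but structural: the estimate obtained by integrating (\ref{3}) only sees a bidiagonal neighbourhood of $\partial\Omega\times\partial\Omega$, so the Hopf lemma together with the tangential vanishing of $G(\cdot,\zeta)$ along the analytic boundary is required to cover the complementary regime in which $\zeta$ lies in a compact subset of the interior.
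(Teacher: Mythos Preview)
Your argument is correct and follows essentially the same route as the paper's proof: the second and third assertions come directly from the estimate (\ref{2}), and the first is handled by a contradiction argument in which a limit point $\zeta_\ast\in\Omega$ is ruled out via the strict sign of the normal derivative of $G(\cdot,\zeta_\ast)$ at the boundary (the paper phrases this as positivity of the Poisson kernel, you phrase it as the Hopf lemma---these are the same thing here). One small omission: the theorem is stated for arbitrary finitely connected $\Omega$ with nondegenerate complementary components, whereas the estimates (\ref{2}), (\ref{3}) and the boundary-regularity argument you invoke all presuppose analytic $\partial\Omega$; the paper's proof opens by noting that the three assertions are conformally invariant, which reduces to the analytic-boundary case, and you should make that reduction explicit as well.
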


\begin{proof}
The statements are conformally invariant, so it is enough to prove
them when $\Omega$ has smooth analytic boundary. By (\ref{2}),
(\ref{3}), (\ref{eq:grad}) the desired inequalities are valid for
$z,\zeta \in \Omega\setminus K$ and it remains only to prove that,
in the first inequality, we can allow all $\zeta\in \Omega$ by
possibly enlarging the compact set $K$. If this were not true, then
there would exist a sequence $\{(z_n, \zeta_n)\}$ with
$$
{\frac{\partial G}{\partial z}(z_n, \zeta_n)} = 0, \quad z_n
\rightarrow z \in \partial \Omega, \quad \zeta_n \rightarrow \zeta
\in \bar \Omega.
$$
According to (\ref{eq:grad}), $\zeta \in  \Omega$. But then also
$\displaystyle{\frac{\partial G}{\partial z}(z, \zeta)= 0}$, which
however cannot be true because $\displaystyle{\frac{\partial
G}{\partial z}(z, \zeta)}$ is a nonzero constant factor times the
Poisson kernel $P(z, \zeta)$, which is strictly positive for $\zeta
\in \Omega, z \in \partial \Omega$. This contradiction finishes the
proof.
\end{proof}

\begin{rem} With slight modifications the proof also works in
several real variables: if $\Omega\subset{\mathbb{R}}^n$ has real
analytic boundary then there exists a compact set $K\subset \Omega$
such that $\nabla_x G(x,y)\ne 0$ for all $x\in \Omega\setminus K$,
$y\in \Omega$. The crucial observation is that (\ref{1}) persists to
hold, so that $\displaystyle\sum_{i,j=1}^n \frac{\partial^2
G(x,y)}{\partial x_i
\partial y_j}\xi_i \eta_j\ne 0$  for $x,y\in \partial \Omega$
whenever $\xi$, $\eta$ are nontangential vectors at $x$ and $y$
respectively.
\end{rem}

\begin{ex}\label{ex:disk}
The boundary behavior and the nature of the pole are illustrated by
the case of the unit disk: $\Omega={{\mathbb{D}}}=\{
z\in{{\mathbb{C}}}: |z|<1 \}$. In this case $\Hat \Omega =
{\mathbb{P}} = \BC \cup \lbrace\infty \rbrace$ with involution
$J:z\mapsto 1/\bar{z}$, and the Schwarz function is $\displaystyle
S(z)= 1/z$. The fundamental potential was given in
Example~\ref{ex:cross-ratio}, and the Green's function is
$$
G(z,\zeta)=-\log\vert  \frac{z- \zeta}{1-z \bar \zeta}\vert.
$$
It follows that
$$
\frac{\partial G(z,\zeta)}{\partial z}=-\frac{1-\vert
\zeta\vert^2}{2(z-\zeta)(1-z\bar{\zeta})},
$$
$$
\frac{\partial^2 G}{\partial z\,
\partial\zeta}=- \frac{1}{2}\frac{1}{(z- \zeta)^2},\quad
\frac{\partial^2 G}{\partial z\,
\partial\bar\zeta} =- \frac{1}{2} \frac{1}{(1-z \bar\zeta)^2},
$$
$$
P(z, \zeta) = \frac{1- \vert \zeta\vert ^2}{\vert z -\zeta \vert^2}.
$$
As for the estimates (\ref{2}),  (\ref{3}) we note that
$\displaystyle \vert z- \zeta\vert \leq 2,\, \, \vert1- z\bar
\zeta\vert\leq 2$ so that, for $\displaystyle z,\zeta \in
\partial {\mathbb{D}}$,
$$
\big| \frac{\partial^2 G}{\partial z\,
\partial \zeta}\big| \geq \frac{1}{8},
\quad \big| \frac{\partial^2 G}{\partial z\,
\partial\bar \zeta}\big| \geq \frac{1}{8},
\quad \big| \frac{\partial^2 G}{\partial z\,
\partial n_\zeta}\big| \geq \frac{1}{8}.
$$
\end{ex}


\subsection{Critical points in the boundary limit}\label{sec:criticalboundary}

When $\zeta\in \partial \Omega$, $\displaystyle{\frac{\partial
G}{\partial z}(z, \zeta)}= 0$ for all $z\in \Omega$, but  by
``blow-up'' one can still speak of a nontrivial limit of
$\displaystyle{\frac{\partial G}{\partial z}(z, \zeta)}$ as
$\zeta\to\partial \Omega$. In a certain sense
$\displaystyle{\frac{\partial^2 G}{\partial z\partial n_\zeta}(z,
\zeta)}$, and hence the Bergman and Schiffer kernels, represents
this limit, but there is also a representation in terms of a Martin
type construction, which we shall discuss in the next section.
Throughout this subsection we assume that $\partial\Omega$ is
analytic.

Fix a point $a\in \Omega$ such that $\partial_z G(a,\zeta)\ne 0$ for
all $\zeta\in\Omega$ and $K(a,\zeta) \neq 0$ for all
$\zeta\in\partial\Omega$. This is possible (with $a$ close enough to
the boundary) by Theorem~\ref{thm:critical} and what precedes it,
viz. (\ref{2}). Then define
\begin{eqnarray}\label{Martin}
F(z,\zeta)= \frac{\partial_z G (z,\zeta)}{\partial_z
G(a,\zeta)},\quad z,\zeta \in \Omega.
\end{eqnarray}
As a function of $z$, $F(z,\zeta)$ is  meromorphic in $\Omega$ with
a pole at $\zeta$. It is normalized so that $F(a,\zeta)= 1$, which
prevents it from degeneration as $\zeta \rightarrow\partial \Omega$.
In Example~\ref{ex:disk}, for the unit disk, we see this from
$$\displaystyle
F(z,\zeta)= \frac{(a-\zeta)(1-a\bar \zeta)}{(z-\zeta)(1- z\bar
\zeta)} = \frac{(a-\zeta)(a-{1}/{\bar \zeta})}{(z-\zeta)(z-{1}/{\bar
\zeta})}.
$$
As a comparison,
$$
\frac{ K(z,\zeta)}{ K(a,\zeta)}= \frac{(1-a\bar \zeta)^2}{ (1- z\bar
\zeta)^2} = \frac{(a-{1}/{\bar \zeta})^2}{(z-{1}/{\bar \zeta})^2 }.
$$
Since $\zeta=1/\bar{\zeta}$ on the boundary, it follows that
$F(z,\zeta)$ and $\displaystyle\frac{ K(z,\zeta)}{ K(a,\zeta)}$
coincide there.

The following theorem shows that the above is what happens in
general.

\begin{thm}\label{5}
Assume $\partial\Omega$ is analytic and choose $a\in \Omega$ as
above. Then the function $F(z,\zeta)$, originally defined in
$\Omega\times\Omega$, extends continuously to
$\Omega\times\overline\Omega$, and on $\Omega\times\partial\Omega$
it agrees with $\displaystyle\frac{K(z,\zeta)}{K(a,\zeta)}$.
Specifically, if $(z_n,\zeta_n)\in \Omega\times\Omega$, $(z,\zeta)
\in \Omega\times\partial\Omega$ and $(z_n,\zeta_n)\to(z,\zeta)$ as
$n\to \infty$, then
$$
\lim_{n\to \infty} F(z_n, \zeta_n)= \frac{ K(z,\zeta)}{K(a,\zeta)}.
$$
\end{thm}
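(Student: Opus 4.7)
The plan is a boundary blow-up: since $G(z,\zeta)=0$ for $\zeta\in\partial\Omega$, so does $\partial_z G(z,\zeta)$ as a function of $\zeta$, and I expect a first-order Taylor expansion in $\zeta$ near the boundary to show that both numerator and denominator of $F(z,\zeta)$ carry a common factor that cancels, leaving a ratio which tends to $K(z,\zeta)/K(a,\zeta)$ in the limit.

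First, I would use the analyticity of $\partial\Omega$ to extend $G(z,\zeta)$ harmonically in $\zeta$ across $\partial\Omega$ via the Schottky double, so that $\zeta\mapsto \partial_z G(z,\zeta)$ is real-analytic in a neighbourhood of $\bar\Omega$ away from $\zeta=z$, and vanishes identically on $\partial\Omega$. Then, from (\ref{eq:KL}) together with the symmetries $K(z,\zeta)=\overline{K(\zeta,z)}$ and $L(z,\zeta)=L(\zeta,z)$, I would extract the companion boundary identity
$$L(z,\zeta_0) = -K(z,\zeta_0)/T(\zeta_0)^2 \qquad (z\in\Omega,\ \zeta_0\in\partial\Omega),$$
which will allow the linear term in the Taylor expansion to be expressed in terms of $K$ alone.

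Next, using $\partial_z\partial_\zeta G = -\tfrac{\pi}{2}L$ and $\partial_z\partial_{\bar\zeta}G = -\tfrac{\pi}{2}K$ together with the boundary identity, a direct computation gives that the inward normal derivative in $\zeta$ of $\partial_z G(z,\cdot)$ at a point $\zeta_0\in\partial\Omega$ equals $\I\pi K(z,\zeta_0)/T(\zeta_0)$. Since the zeroth-order term vanishes, Taylor-expanding $\partial_z G(z,\zeta)$ around the foot of perpendicular $\tilde\zeta$ from $\zeta$ to $\partial\Omega$ yields
$$\partial_z G(z,\zeta) = \frac{\I\pi s\, K(z,\tilde\zeta)}{T(\tilde\zeta)} + O(s^2), \qquad s = \mathrm{dist}(\zeta,\partial\Omega).$$
The common factor $\I\pi s/T(\tilde\zeta)$ cancels in $F$, giving
$$F(z,\zeta) = \frac{K(z,\tilde\zeta) + O(s)}{K(a,\tilde\zeta) + O(s)}.$$
For a sequence $(z_n,\zeta_n)\to(z,\zeta)$ with $\zeta\in\partial\Omega$, we have $s_n\to 0$ and $\tilde\zeta_n\to\zeta$, and continuity of $K$ together with $K(a,\zeta)\neq 0$ (by the choice of $a$) then yields the desired limit $K(z,\zeta)/K(a,\zeta)$.

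The hard part will be uniformity of the Taylor remainder $O(s^2)$: it must hold uniformly in $z$ (on compact subsets of $\Omega$) and in $\tilde\zeta$ (on the compact set $\partial\Omega$) so that the joint limit can be taken. I expect this to follow from the real-analytic smoothness of $G(z,\zeta)$ on compact subsets of $\Omega\times\bar\Omega$ off the diagonal, which is itself provided by the Schottky-double extension, together with compactness of $\partial\Omega$ and the fact that $K(a,\cdot)$ is bounded away from zero on a neighbourhood of $\partial\Omega$.
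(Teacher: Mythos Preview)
Your proposal is correct and essentially identical to the paper's own proof. Both arguments Taylor-expand $\partial_z G(z,\zeta)$ in $\zeta$ about the nearest boundary point (the paper's $\eta_n$, your $\tilde\zeta$), use the boundary relation between $K$ and $L$ (equivalently, the vanishing of the tangential $\zeta$-derivative of $\partial_z G$) to write the first-order term as a common factor times $K$, cancel that factor in the ratio, and conclude via continuity of $K$ and the fact that $K(a,\cdot)$ is bounded away from zero near $\partial\Omega$; your explicit remarks on uniformity of the $O(s^2)$ remainder are exactly what the paper handles with the clause ``uniformly small as $z_n\to z\in\Omega$.''
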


\begin{proof}
Let $(z_n,\zeta_n)\to(z,\zeta)$ as in the statement and let
$\eta_n\in\partial\Omega$ be closest points on the boundary to the
$\zeta_n$:
$$
|\zeta_n-\eta_n|=d(\zeta_n)=d(\zeta_n,\partial\Omega).
$$
With $T(\eta_n)$ the tangent vector at $\eta_n\in\partial\Omega$, we
have
$$
\frac{\partial G}{\partial z}(z_n, \eta_n)= 0,
$$
$$
\frac{\partial^2 G}{\partial z \partial \zeta}(z_n, \eta_n)\,
T(\eta_n) + \frac{\partial^2 G}{\partial z \partial \bar \zeta}(z_n,
\eta_n) \,\overline{T(\eta_n)} = 0.
$$
Therefore, Taylor expansion with respect to $\zeta$ of
$\frac{\partial G}{\partial z}(z_n, \zeta)$ at $\zeta=\eta_n$ gives
$$
\frac{\partial G}{\partial z}(z_n, \zeta_n) = \frac{\partial^2
G}{\partial z\partial \zeta} (z_n, \eta_n)(\zeta_n- \eta_n)
+\frac{\partial^2 G}{\partial z\partial \bar\zeta} (z_n,
\eta_n)(\bar\zeta_n- \bar\eta_n) + {\mathcal{O}}(\vert \zeta_n-
\eta_n\vert^2)
$$
$$
=2\I\frac{\partial^2 G}{\partial z \partial \zeta}(z_n, \eta_n)\,
T(\eta_n)\Im \frac{\zeta_n- \eta_n}{T(\eta_n)}
+{\mathcal{O}}({d}(\zeta_n)^2)
$$
$$
 =2\I\frac{\partial^2
G}{\partial z \partial \zeta}(z_n, \eta_n)\, T(\eta_n){d}(\zeta_n)
+{\mathcal{O}}({d}(\zeta_n)^2).
$$
Note that $\Im \frac{\zeta_n- \eta_n}{T(\eta_n)}={d}(\zeta_n)$. The
remainder term ${\mathcal{O}}({d}(\zeta_n)^2)$ in principle depends
on $z_n$, but it is uniformly small as $z_n\to z\in\Omega$.

We now obtain
$$
F(z_n,\zeta_n)=\frac{\partial_z G (z,\zeta_n)}{\partial_z
G(a,\zeta_n)} =\frac{2\I\frac{\partial^2 G}{\partial z \partial
\zeta}(z_n, \eta_n)T(\eta_n){d}(\zeta_n)
+{\mathcal{O}}({d}(\zeta_n)^2)}{2\I\frac{\partial^2 G}{\partial z
\partial \zeta}(a, \eta_n)T(\eta_n){d}(\zeta_n)
+{\mathcal{O}}({d}(\zeta_n)^2)}
$$
$$
=\frac{K(z_n,\eta_n)+{\mathcal{O}}({d}(\zeta_n))}{K(a,\eta_n)+{\mathcal{O}}({d}(\zeta_n))}.
$$
Since $K(a,\zeta)$ is bounded away from zero for $\zeta$ close to
$\partial\Omega$ the statements of the theorem follow.
\end{proof}

As a consequence we have
\begin{cor}\label{cor:critical}
The limit set of the set of critical points of $G(z,\zeta)$ as
$\zeta\to\partial\Omega$ is exactly the set of zeros of $K(z,\zeta)$
for $\zeta\in\partial\Omega$. More precisely, for $z\in \Omega,
\zeta\in \partial \Omega$, the following statements are equivalent.
\begin{enumerate}

\item[(i)] $K(z,\zeta)= 0$.

\item[(ii)] $ L(z,\zeta)= 0$.

\item[(iii)]  There exist $(z_n, \zeta_n)\in \Omega\times \Omega$,
$(n=1, 2,\cdots)$ with $ \partial_z G(z_n,\zeta_n)= 0$  such that
$(z_n, \zeta_n)\to (z, \zeta)$  as $n\to \infty$.

\item[(iv)] For each sequence $\{\zeta_n\}_{n\geq 1} \subset \Omega$ such that
$\zeta_n \to \zeta$, there exist $z_n \in \Omega$ with $z_n \to z$
such that $\partial_z G(z_n, \zeta_n)= 0$.

\end{enumerate}
\end{cor}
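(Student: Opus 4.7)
The plan is to pivot everything on Theorem~\ref{5}. Recall that $F(w,\eta)=\partial_z G(w,\eta)/\partial_z G(a,\eta)$ is defined on $\Omega\times\Omega$, extends continuously to $\Omega\times\bar\Omega$, and equals $K(w,\eta)/K(a,\eta)$ on $\Omega\times\partial\Omega$. Because $\partial_z G(a,\eta)\neq 0$ throughout $\Omega$ and $K(a,\eta)\neq 0$ on $\partial\Omega$ (by the choice of $a$), one has two dictionaries: $(w,\eta)\in\Omega\times\Omega$ is a critical pair for $G$ iff $F(w,\eta)=0$, and a point $(w,\eta)\in\Omega\times\partial\Omega$ lies in $K^{-1}(0)$ iff $F(w,\eta)=0$ there. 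The whole corollary will therefore reduce to a continuity / Hurwitz analysis of the zero set of $F$.

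For (i)$\Leftrightarrow$(ii), I would apply the boundary identity~(\ref{eq:KL}) with the roles of $z$ and $\zeta$ interchanged and use the symmetries $L(z,\zeta)=L(\zeta,z)$, $K(z,\zeta)=\overline{K(\zeta,z)}$; this yields
$$
L(z,\zeta)\,T(\zeta)^{2}=-K(z,\zeta)\qquad(\zeta\in\partial\Omega,\;z\in\Omega),
$$
and since $T(\zeta)\neq 0$ the two zero sets coincide. The implication (iii)$\Rightarrow$(i) is then immediate: if $(z_n,\zeta_n)\to(z,\zeta)$ and $\partial_z G(z_n,\zeta_n)=0$, then $F(z_n,\zeta_n)=0$ and the continuity supplied by Theorem~\ref{5} gives $F(z,\zeta)=0$, i.e.\ $K(z,\zeta)=0$. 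The implication (iv)$\Rightarrow$(iii) is trivial: pick any sequence $\zeta_n\to\zeta$.

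The substantive step is (i)$\Rightarrow$(iv), which I would establish with Hurwitz's theorem. Assume $K(z,\zeta)=0$ and let $\zeta_n\to\zeta$. Set $F_n(w)=F(w,\zeta_n)$ and $F_\infty(w)=F(w,\zeta)=K(w,\zeta)/K(a,\zeta)$. Since $\partial\Omega$ is analytic, $K(\cdot,\zeta)$ extends analytically across $\partial\Omega$, so $F_\infty$ is holomorphic on $\Omega$; the normalization $F_\infty(a)=1$ ensures it is not identically zero. Choose a closed disk $\bar D\subset\Omega$ centred at $z$ small enough that (a) $z$ is the only zero of $F_\infty$ in $\bar D$ and $F_\infty\neq 0$ on $\partial D$, and (b) $\zeta_n\notin\bar D$ for all large $n$, so that every $F_n$ is holomorphic on $\bar D$. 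Joint continuity of $F$ on the compact set $\bar D\times\bar\Omega$ (a subset of the continuity domain $\Omega\times\bar\Omega$) yields uniform convergence $F_n\to F_\infty$ on $\bar D$. Hurwitz's theorem then produces, for all large $n$, at least one zero $z_n\in D$ of $F_n$; any cluster point of $(z_n)$ is a zero of $F_\infty$ lying in $\bar D$, hence equals $z$, so $z_n\to z$ and $\partial_z G(z_n,\zeta_n)=0$, which is (iv). The only delicate points — uniform convergence of $F_n$ on $\bar D$ and non-vanishing of $F_\infty$ — are handled, respectively, by compactness of $\bar D\times\bar\Omega$ combined with continuity of $F$, and by the value $F_\infty(a)=1$.
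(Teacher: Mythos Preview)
Your proof is correct and follows essentially the same route as the paper: the boundary identity~(\ref{eq:KL}) for (i)$\Leftrightarrow$(ii), continuity of $F$ from Theorem~\ref{5} for (iii)$\Rightarrow$(i), and a zero-counting argument for (i)$\Rightarrow$(iv). Your use of Hurwitz's theorem is a mild repackaging of the paper's argument-principle computation $\frac{1}{2\pi\I}\int_\gamma d\log F(\cdot,\zeta_n)\to\frac{1}{2\pi\I}\int_\gamma d\log K(\cdot,\zeta)$, and is in fact slightly cleaner since it avoids the paper's ``for instance'' assumption that $z$ is a simple zero of $K(\cdot,\zeta)$.
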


\begin{proof}

By (\ref{eq:KL}) (with switched roles of $z$ and $\zeta$), (i) and
(ii) are equivalent, and clearly (iv) implies (iii).

If $\partial_z G(z_n,\zeta_n)=0$, i.e., $F(z_n , \zeta_n) =0$, then
it is immediate from Theorem~\ref{5} that $K(z, \zeta)= 0$. Thus
(iv) implies (i). Conversely, assume for instance that $K(z, \zeta)=
0$, $\frac{\partial}{\partial z}K(z, \zeta)\ne 0$  and let $\zeta_n
\in \Omega$, $\zeta_n\to\zeta$. The functions $F(\cdot, \zeta_n)$
and $ K(\cdot, \zeta)$ have equally many poles and zeros (namely $2$
poles and $2{\texttt{g}}$ zeros seen in the Schottky double),
and since $ F(\cdot, \zeta_n)$ tends
to a constant times $K(\cdot, \zeta)$, $F(\cdot, \zeta_n)$ must have
a zero $z_n$ near the zero $z$ of $K(\cdot,\zeta)$. In fact, let
$\displaystyle \gamma=
\partial {\mathbb{D}}(z, \epsilon)$ with $\epsilon > 0$ small enough.
Then, as $n\to\infty$,
$$
\frac{1}{2\pi\I}\int_{\gamma} d \log F(\cdot, \zeta_n) \to
\frac{1}{2\pi\I}\int_{\gamma} d \log K(\cdot, \zeta) = 1,
$$
the last integral being de number of zeros of $K(\cdot, \zeta)$
inside $\gamma$. Hence $F(\cdot, \zeta_n) $ has exactly one zero
$z_n$ inside $\gamma$. Letting $\epsilon$ tend to $0$ as $n$ tends
to $\infty$, we conclude that there are zeros $z_n$ with $ z_n \to
z$. Thus (i) implies (iv), and the proof is complete.
\end{proof}


\begin{rem}
The following example illustrates the fact that critical points of
Green's function are not necessarily simple. Let $\omega=e^{2\pi\I
/3}$, $D_j= {\mathbb{D}}(\omega ^j, \rho)$, $j= 1, 2, 3$, and $0<\rho<
\frac{\sqrt 3}{2}$. The Green's function $G(z)$ of
$\Omega=\BC\setminus (\bar D_1 \cup \bar D_2 \cup \bar D_3)$ with
pole at infinity has two critical points (${\texttt{g}}=2$). Since
$\Omega$ is invariant under $z \to \omega z$, the two critical
points must be a double point located at the origin. By means of a
M\"obius transformation, one obtains an example of a $2$-holed disk
which has a Green's function with a multiple critical point.
\end{rem}


\section{The Martin and gradient boundaries}

\subsection{The Martin compactification}

In this section we shall give the preceding considerations their
right meaning. We adapt the construction of Martin compactification
as presented for instance in I.S.~Gal \cite{Gal}. Let us first
recall a general theorem of Constantinescu-Cornea (see
\cite{Constantinescu-Cornea}, p.~97).

\begin{thm}
Let $ \Omega $ be a non-compact locally compact Hausdorff space and
let $\Phi$ be a family of  continuous functions $\Omega\to
[-\infty,\, +\infty]$. Then there exists a compact topological space
$\Omega^M$, unique up to homeomorphisms, such that
\begin{enumerate}

\item[(i)] $\Omega $ is an open and dense subset of $\Omega^M$.

\item[(ii)] Every $f\in  \Phi$ can be extended to a continuous function
$ f^M$ on $\Omega^M$.

\item[(iii)] The functions $f^M$ separate points on
$\partial^M\Omega:=\Omega^M \setminus \Omega$.

\end{enumerate}
\end{thm}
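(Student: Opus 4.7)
The plan is to realise $\Omega^M$ as the closure of a natural diagonal embedding of $\Omega$ into a product of two compact Hausdorff auxiliary spaces: the Alexandroff one-point compactification $\Omega^+=\Omega\cup\{\infty\}$, which is compact Hausdorff because $\Omega$ is locally compact Hausdorff, and the Tychonoff cube $Q=\prod_{f\in\Phi}[-\infty,+\infty]$, whose factors are the compact target spaces of the members of $\Phi$. I would define
\[
\iota\colon\Omega\longrightarrow\Omega^+\times Q,\qquad \iota(x)=\bigl(x,\,(f(x))_{f\in\Phi}\bigr),
\]
and set $\Omega^M:=\overline{\iota(\Omega)}$, topologised as a closed subset of a compact Hausdorff space.

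First I would check that $\iota$ is a topological embedding and that $\iota(\Omega)=\pi_1^{-1}(\Omega)\cap\Omega^M$, where $\pi_1$ denotes the projection to $\Omega^+$. The inclusion $\iota(\Omega)\subset\pi_1^{-1}(\Omega)\cap\Omega^M$ is immediate. Conversely, if $y\in\Omega^M$ with $\pi_1(y)=x\in\Omega$, then a net $x_\alpha\in\Omega$ with $\iota(x_\alpha)\to y$ satisfies $x_\alpha\to x$ in $\Omega^+$, hence in $\Omega$, so continuity of each $f\in\Phi$ forces the remaining coordinates of $y$ to be $f(x)$, and $y=\iota(x)$. Property (i) now follows: $\iota(\Omega)$ is open in $\Omega^M$ as the preimage of the open set $\Omega\subset\Omega^+$, and dense by construction. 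For (ii) I take $f^M$ to be the restriction to $\Omega^M$ of the coordinate projection $Q\to[-\infty,+\infty]$ corresponding to $f$; this is continuous and agrees with $f$ on $\iota(\Omega)\cong\Omega$. For (iii), two boundary points $y_1,y_2\in\partial^M\Omega$ satisfy $\pi_1(y_i)=\infty$, so if $f^M(y_1)=f^M(y_2)$ for every $f\in\Phi$ then $y_1$ and $y_2$ coincide in every coordinate of $\Omega^+\times Q$, hence $y_1=y_2$.

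For uniqueness I would establish a universal property: any compactification $X$ satisfying (i)--(iii) is canonically homeomorphic to $\Omega^M$. Given such an $X$ with embedding $j\colon\Omega\hookrightarrow X$, the map $p\colon X\to\Omega^+$ sending $j(x)\mapsto x$ and collapsing $X\setminus j(\Omega)$ to $\infty$ is continuous, because the preimage of any compact $K\subset\Omega$ equals $j(K)$, which is compact in the Hausdorff space $X$ and therefore closed. Coupling $p$ with the extensions $f^X$ produces a continuous map $X\to\Omega^+\times Q$ whose image lies in $\Omega^M$ (it coincides with $\iota$ on the dense subset $j(\Omega)$) and which is injective: two points of $j(\Omega)$ are separated by $p$, while two points of $\partial^M X$ are separated by the $f^X$-coordinates thanks to (iii). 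A continuous injection from a compact space to a Hausdorff space is a homeomorphism onto its image, giving $X\cong\Omega^M$.

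The main technical obstacle is the identification $\iota(\Omega)=\pi_1^{-1}(\Omega)\cap\Omega^M$: both the openness of $\iota(\Omega)$ needed in (i) and the clean description $\partial^M\Omega=\pi_1^{-1}(\infty)\cap\Omega^M$ exploited in (iii) rest on it. This is precisely where local compactness enters, ensuring that convergence in $\Omega^+$ to a point of $\Omega$ is convergence in $\Omega$; once this is settled, the remaining claims reduce to standard compact Hausdorff manipulations.
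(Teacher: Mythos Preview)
The paper does not supply a proof of this theorem at all: it is quoted as a known result of Constantinescu--Cornea (with a page reference) and then immediately applied. The only hint of an argument the paper offers is the remark, following Brelot and Gal, that the same compactification can be obtained as the completion of $\Omega$ with respect to the weakest uniform structure making all $f\in\Phi$ uniformly continuous into the compact target $[-\infty,+\infty]$; precompactness of that uniformity then yields compactness of the completion. So there is no ``paper's own proof'' to match against.

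Your argument is correct and is essentially the standard construction. Two small points worth tightening. First, in the uniqueness step you produce a continuous injection $X\to\Omega^M$ and invoke the compact--Hausdorff lemma to get a homeomorphism onto the image, but you should say explicitly why the image is all of $\Omega^M$: the image is compact, hence closed in the Hausdorff space $\Omega^+\times Q$, and it contains the dense subset $\iota(\Omega)$, so it equals $\overline{\iota(\Omega)}=\Omega^M$. Second, the argument silently uses that $X$ is Hausdorff (so that compact subsets such as $j(K)$ are closed, and so that the continuous bijection is a homeomorphism); this is harmless in context, since the construction itself yields a Hausdorff $\Omega^M$ and the uniqueness clause is naturally read among compact Hausdorff spaces, but it is worth flagging. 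Compared with the uniform-structure viewpoint the paper alludes to, your Tychonoff-cube embedding is more concrete and makes properties (i)--(iii) transparent, while the uniform-structure formulation has the advantage of giving the universal property (and hence uniqueness) almost for free.
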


For example, when  $\Omega$ is a multiply connected domain in the
plane one may, for a fixed $a \in \Omega$, consider the family
$\Phi$ of functions $\zeta \mapsto M(z,\zeta)= \displaystyle{
\frac{G(z,\zeta)}{G(a,\zeta)}}$,  parametrized by $z\in\Omega$ and
with the convention that $M(a,a)=1$. Each function $z \mapsto
M(z,\zeta)$ is continuous, even at $a$. The space $\Omega^M$
obtained by the theorem of Constantinescu-Cornea is, up to a
homeomorphism, independent of $a$. It is the Martin compactification
of $\Omega$, and the Martin boundary is $\partial^M\Omega=\Omega^M
\setminus \Omega$.

As discovered by M.~Brelot and I.~S.~Gal (see \cite{Gal}), the
Martin boundary can also be introduced via uniform structures. For
generalities on such, see \cite{Kelley}. Let $X$ be a nonempty set,
let $\left( Y, \mathcal V \right )$ be a uniform space and let
$\Phi$ be a family of functions $\phi: X \to Y$. Then there is a
weakest uniform structure on $X$ making the functions in $\Phi$
uniformly continuous. It is the uniformity ${\mathcal U}$ generated
by all the $\phi^{-1}(V)= \{(x,y)\in X\times X: \left( \phi(x),
\phi(y)\right) \in {V}\}$, for $V\in\mathcal{V}$, $\phi\in\Phi$, as
a subbase.

If $\mathcal V$ is precompact (totally bounded), so is \,$\mathcal
U$. Now the space $Y= [-\infty,\,+\infty ]$ is compact, hence has a
unique uniform structure, and this is precompact. With $\Omega$ a
multiply connected domain in the complex plane, and $\Phi$ the set
of functions $M(z,\cdot)$ ($z\in\Omega$) above, this gives a uniform
structure the completion of which is the Martin compactification.
The same remark also applies, with $Y= {\mathbb{P}}$, to the family
of functions of $\zeta\mapsto \displaystyle F(z,\zeta)= \partial_z G
(z,\zeta) /\partial_z G(a,\zeta)$ for $z \in \Omega$, $F(a, a)= 1$.
This gives a uniform structure $\mathcal U_G$, which we call the
{\em gradient structure}. Since ${\mathbb{P}}$ is compact we have

\begin{thm}\label{G1}
The gradient structure $\mathcal U _G$ is precompact.
\end{thm}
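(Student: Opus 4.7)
The plan is to observe that this theorem is a soft consequence of the general construction recalled immediately before the statement, once we identify the target space. By definition, $\mathcal{U}_G$ is the weakest uniformity on $\Omega$ with respect to which every function
\[
F(z,\cdot)\colon \Omega \longrightarrow {\mathbb{P}}, \qquad \zeta\longmapsto \frac{\partial_z G(z,\zeta)}{\partial_z G(a,\zeta)},
\]
(for $z\in\Omega$) becomes uniformly continuous, where $\mathbb{P}$ is equipped with its canonical uniform structure $\mathcal{V}$ coming from, say, the spherical metric.

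First I would recall that $\mathbb{P}$, being a compact Hausdorff space, carries a unique uniform structure compatible with its topology, and that every uniformity on a compact space is automatically precompact (this is standard, cf.\ \cite{Kelley}). Hence $(\mathbb{P},\mathcal{V})$ is precompact.

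Next I would invoke verbatim the general principle quoted just before the theorem: with $X=\Omega$, $Y=\mathbb{P}$, and $\Phi=\{F(z,\cdot):z\in\Omega\}$, the uniformity generated by the subbase $\{F(z,\cdot)^{-1}(V):V\in\mathcal{V},\,z\in\Omega\}$ is nothing but $\mathcal{U}_G$. Since precompactness of $\mathcal{V}$ passes to the uniformity so generated on $X$, we conclude that $\mathcal{U}_G$ is precompact.

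In other words, no property specific to the Green's function, the point $a$, or the smoothness of $\partial\Omega$ is needed here; the content of the theorem is entirely topological, and the ``main obstacle'' amounts only to correctly quoting the heredity of precompactness under the formation of initial uniformities, which is already granted in the paragraph preceding the statement. The substantive analytic work about $F(z,\zeta)$ will enter only later, when one studies the actual boundary $\partial^G\Omega := \Omega^G\setminus\Omega$ of the completion $\Omega^G$ provided by this precompactness.
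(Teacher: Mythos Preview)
Your proposal is correct and follows exactly the same approach as the paper: the paper's argument consists solely of the sentence ``Since ${\mathbb{P}}$ is compact we have'' preceding the statement, relying on the general principle (stated just before) that precompactness of the target uniformity $\mathcal{V}$ implies precompactness of the initial uniformity $\mathcal{U}$. You have simply spelled out this one-line argument in more detail.
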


To make the link with the Constantinescu-Cornea theorem we recall
that as soon as a uniform structure is introduced on a set $X$,
there automatically arises a complete uniform space $(\bar X, \bar{
\mathcal U})$ and an injection map $f: X\to \bar X$ such that $f(X)$
is dense in $\bar X$ and ${\mathcal U}=  f^{-1} \left(\bar {\mathcal
U}\right )$. The triple $\left( f, \bar X, \bar{\mathcal U} \right)$
is the completion of $X$ with respect to $\mathcal U$. Moreover the
space $\bar X$ is compact if and only if the uniform structure
$\mathcal U$ is precompact. In our setting, with a multiply
connected domain $\Omega\subset{\mathbb{C}}$, we can therefore
formulate Theorem~\ref{G1} in a more precise way as follows.

\begin{thm}
The completion of the space $\Omega$ with respect to its gradient
structure $\mathcal U _G$ yields a compact space $\Omega^{\mathcal
G}=\Omega\cup \partial^{\mathcal G} \Omega$, the gradient
compactification of $\Omega$.
\end{thm}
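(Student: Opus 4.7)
The plan is to read this result as a direct corollary of Theorem~\ref{G1} via the standard completion theorem for uniform spaces: for every uniform space $(X,\mathcal U)$ there exists an essentially unique complete uniform space $(\bar X,\bar{\mathcal U})$ together with a uniformly continuous map $f\colon X\to\bar X$ with dense image, and $\bar X$ is compact if and only if $\mathcal U$ is precompact. With $X=\Omega$ and $\mathcal U=\mathcal U_G$, precompactness is precisely the content of Theorem~\ref{G1}, so the completion is a compact space, which I would label $\Omega^{\mathcal G}$; the boundary $\partial^{\mathcal G}\Omega$ is then simply defined to be $\Omega^{\mathcal G}\setminus f(\Omega)$.

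The one point that still requires attention is that $f$ actually be an injection, so that $\Omega$ honestly embeds in $\Omega^{\mathcal G}$. This amounts to showing that the gradient structure is Hausdorff, i.e.\ that the defining family $\Phi=\{F(z,\cdot):z\in\Omega\}$ separates points of $\Omega$. I would verify this by choosing, for given $\zeta_1\ne\zeta_2$ in $\Omega$, the single test function $F(\zeta_1,\cdot)$: since $\partial_z G(\cdot,\zeta)$ has a simple pole at $\zeta$ and the normalization $\partial_z G(a,\zeta)$ stays finite and nonzero at $\zeta=\zeta_1$ (by the choice of $a$ preceding Theorem~\ref{5}), one has $F(\zeta_1,\zeta_1)=\infty$ in $\BP$ while $F(\zeta_1,\zeta_2)$ is finite, so the two points are distinguished. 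The general principle that the weak uniformity generated by a point-separating family of continuous maps into a Hausdorff uniform space is itself Hausdorff then yields the separation of $\mathcal U_G$.

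Finally I would check that $f(\Omega)$ is open in $\Omega^{\mathcal G}$, so that $\partial^{\mathcal G}\Omega$ truly deserves the name of a boundary: continuity together with the point-separation of $\Phi$ makes $f$ a homeomorphism of $\Omega$ onto its image, which is therefore locally compact; being a dense locally compact subspace of the compact Hausdorff space $\Omega^{\mathcal G}$, it must be open. The only genuine work in the argument is the separation check above, and it is the step I would expect to be the main (though still quite mild) obstacle; all the substantive analytic content has already been absorbed into Theorem~\ref{G1}, leaving the present statement as essentially the unpacking of its geometric meaning.
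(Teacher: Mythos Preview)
Your proposal is correct and matches the paper's approach, which simply invokes the general completion theorem for uniform spaces together with the precompactness from Theorem~\ref{G1}; the paper in fact offers no further argument, presenting the theorem as a direct reformulation. Your separation check (needed to justify that the completion map is an injection) and the openness verification are careful additions that the paper passes over in silence.
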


In analogy with the Martin compactification, we call the set
$\partial^{\mathcal G} \Omega=\Omega^{\mathcal G}\setminus \Omega$
the {\em gradient boundary}. In Theorem~\ref{5} we showed that for
multiply connected domains with analytic boundary, the gradient
boundary is identical with the Euclidian one.


\subsection{An estimate of P. Levy}

Thinking of the Martin boundary as a local concept we may, in view
of Example~\ref{ex:disk} and beginning of
subsection~\ref{sec:criticalboundary}, ask for local approximation
of the Green's function of a general domain with sufficiently smooth
boundary by the Green's function of the upper half-plane. Sharp
estimates in this direction have been obtained by P.~Levy
\cite{Levy}. Below we review Levy's method.

Let $z \in \Omega$, ${d}(z)= {\rm dist\,} (z,\partial\Omega)$, $c$ a
point on $\partial \Omega$  on distance ${d}(z)$ from $z$ and let
$z'$ be the symmetric point of $z$ with respect  to the boundary
$\partial \Omega$, so that $\displaystyle c= \frac{1}{2}(z+ z') $.
We assume that $z' \notin \bar \Omega$, which is the case for
instance if $\partial \Omega$ satisfies an exterior ball condition
(which we henceforth assume) and ${d}(z)$ is sufficiently small. Let
$R$ the radius of the largest circle in $\Omega$ which is tangent to
$\partial \Omega$ at $c$ and $R'$ the radius of the largest circle
outside $\bar \Omega$ which is tangent to $\partial \Omega$ at $c$.
We denote by $a$, $a'$ the centers of the respective circles,
assuming that $R'<\infty$; the case $R'=\infty$ can be easily
covered by a limit argument. The points $a,z,c,z',a'$ lie along a
straight line.

\begin{thm} In the above notation,
$$
\log \left(1- \frac{2d}{2R' +d} \right)< \log |\frac{z'- \zeta}{z-
\zeta}|- G(z, \zeta) <\log \left(1+ \frac{2d}{2R  -d} \right)
$$
for every $\zeta \in \Omega$.
\end{thm}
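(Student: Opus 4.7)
The plan is to apply the maximum principle to the auxiliary function
\begin{equation*}
H(\zeta) = \log\left|\frac{z'-\zeta}{z-\zeta}\right| - G(z,\zeta).
\end{equation*}
Writing $G(z,\zeta)=-\log|z-\zeta|+h(\zeta)$ with $h$ harmonic in $\Omega$, the logarithmic singularity at $\zeta=z$ cancels, and since $z'\notin\bar\Omega$ the function $H$ is harmonic in $\Omega$ and continuous on $\bar\Omega$ with boundary values $H|_{\partial\Omega}=\log|(z'-\zeta)/(z-\zeta)|$. The theorem thus reduces to the geometric bound
\begin{equation*}
\frac{2R'-d}{2R'+d}\le \left|\frac{z'-\zeta}{z-\zeta}\right| \le \frac{2R+d}{2R-d}\qquad (\zeta\in\partial\Omega);
\end{equation*}
since $H(c)=0$ lies strictly between $\log\tfrac{2R'-d}{2R'+d}$ and $\log\tfrac{2R+d}{2R-d}$, $H$ is nonconstant on $\bar\Omega$, so the strong maximum principle upgrades these to strict inequalities throughout $\Omega$.

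To establish the two geometric bounds I would exploit the Möbius map $\phi(\zeta)=(\zeta-z)/(\zeta-z')$, for which $|\phi(\zeta)|^{-1}=|(z'-\zeta)/(z-\zeta)|$ and $\phi(z)=0$, $\phi(z')=\infty$. Placing $c$ at the origin with the inward normal in the $+\I$-direction, one has $z=\I d$, $z'=-\I d$, $a=\I R$, $a'=-\I R'$; the normal line through $a,z,c,z',a'$ maps to the real axis, and $\phi(c)=-1$. By conformality, $\phi(\partial B(a,R))$ is the circle meeting the real axis orthogonally at $-1$ and $\phi(c+2Rn)=(2R-d)/(2R+d)$, i.e.\ the circle of center $-d/(2R+d)$ and radius $2R/(2R+d)$; analogously $\phi(\partial B(a',R'))$ has center $d/(2R'-d)$ and radius $2R'/(2R'-d)$.

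The key observation, and the principal bookkeeping difficulty, is that $z'$ lies outside $B(a,R)$ (since $|z'-a|=R+d>R$) but inside $B(a',R')$ (since $|z'-a'|=R'-d<R'$), so $\phi$ preserves the interior/exterior dichotomy for $B(a,R)$ and swaps it for $B(a',R')$. For $\zeta\in\partial\Omega$, exterior to both disks, one therefore has: $\phi(\zeta)$ is exterior to the first image circle, whose interior contains $0$ (since $d/(2R+d)<2R/(2R+d)$), yielding $|\phi(\zeta)|\ge 2R/(2R+d)-d/(2R+d)=(2R-d)/(2R+d)$ and hence the upper bound on $|(z'-\zeta)/(z-\zeta)|$; simultaneously $\phi(\zeta)$ is interior to the second image circle, so the triangle inequality gives $|\phi(\zeta)|\le d/(2R'-d)+2R'/(2R'-d)=(2R'+d)/(2R'-d)$, hence the lower bound. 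Once this orientation analysis is settled, the proof concludes by the maximum principle as described above.
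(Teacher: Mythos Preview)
Your proof is correct and follows the same overall strategy as the paper: reduce to the maximum principle after bounding the harmonic boundary function $\log|(z'-\zeta)/(z-\zeta)|$ on $\partial\Omega$. The only difference is packaging. The paper bounds this function by observing that its level sets are the Apollonius circles $\Gamma_k=\{|z'-\zeta|=k|z-\zeta|\}$ and identifying the two extremal Apollonius circles tangent to $\partial B(a,R)$ and $\partial B(a',R')$ at the antipodal points $b=2\I R$ and $b'=-2\I R'$, then reading off $|z'-b|/|z-b|=(2R+d)/(2R-d)$ and $|z'-b'|/|z-b'|=(2R'-d)/(2R'+d)$. Your M\"obius map $\phi(\zeta)=(\zeta-z)/(\zeta-z')$ is exactly the map whose level sets $|\phi|=\text{const}$ are those Apollonius circles, so your image-circle computation and the paper's tangent-Apollonius-circle computation are two descriptions of the same geometry; your points $(2R-d)/(2R+d)$ and $(2R'+d)/(2R'-d)$ are precisely $\phi(b)$ and $\phi(b')$.

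One small wording issue: the sentence ``since $H(c)=0$ lies strictly between the two bounds, $H$ is nonconstant'' is not a valid inference (nothing prevents $H\equiv 0$). What you actually need, and what the observation $H(c)=0$ does give, is that $H$ is not identically equal to either bound; then if $H$ attained, say, the upper bound at an interior point the strong maximum principle would force $H$ to be that constant, contradicting $H(c)=0$. Phrase it that way and the strictness in $\Omega$ follows cleanly.
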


\begin{proof}
The function $\zeta \mapsto \log |\frac{z'- \zeta}{z- \zeta}|- G(z,
\zeta) $ is harmonic in $\Omega$ and equals $\displaystyle \log
|\frac{z'- \zeta}{z- \zeta}|$ on $\partial \Omega$. The level lines
of the latter function are circles, namely Apollonius circles with
respect to the points $z$ and $z'$. Recall that the Apollonius
circles with centers $z$ and $z'$ are $\Gamma_k= \{\zeta \in \BC:
|z'-\zeta|= k|z-\zeta|\}$ for $k>0$.

We conclude from the above that we get upper and lower bounds for
$\displaystyle \log |\frac{z'- \zeta}{z- \zeta}|$ on $\partial
\Omega$ by considering one Apollonius circle which lies entirely
inside $\Omega$ and one which lies entirely outside $\Omega$.
Optimal choices with respect to the given data are those Apollonius
circles which are tangent to the largest interior and exterior
balls. The points of tangency are denoted by $b$ and $b'$. It is
easily seen that
$$
\log|\frac{z'- b'}{z- b'}|< \log |\frac{z'- \zeta}{z- \zeta}| < \log
|\frac{z'- b}{z- b}|
$$
for $\zeta\in\partial\Omega$. Since
$$
|z'- b|= 2R+d,\quad |z- b|= 2R- d,
$$
$$
|z'- b'|= 2R'- d,\quad |z- b'|= 2R'+d
$$
we get, for $\zeta\in \partial \Omega$,
$$
\log\left(1- \frac{2d}{2R'+d}\right)< \log |\frac{z'- \zeta}{z-
\zeta}|< \log\left(1+ \frac{2d}{2R- d}\right),
$$
and hence
$$
\log\left(1- \frac{2d}{2R'+d}\right)< \log |\frac{z'- \zeta}{z-
\zeta}|- G(z, \zeta)< \log\left(1+ \frac{2d}{2R- d}\right).
$$
Now the assertion of the theorem follows from the maximum principle.
\end{proof}


\section{The Poincar\'{e} metric, Taylor coefficients and level lines}\label{sec:Taylor}

\subsection{The Poincar\'{e} metric on level lines for the Green function}

Here we shall discuss some properties of level lines of Green's
function and in particular their connections with the Poincar\'e
metric. Later on we shall interpret these level lines as geodesics
for a different metric.

Consider a simply connected domain $\Omega$, let $d\sigma_z=
\rho(z) |d z|$ be the Poincar\'e metric and $G(z, \zeta)$  the
Green's function of $\Omega$.

\begin{prop}
For every $a \in \Omega$, the density $\rho (z)$ of the Poincar\'e
metric is
\begin{equation}\label{eq:rhosinhG}
\rho(z)= \frac{\vert \frac{\partial G}{\partial z }(z, a)\vert}{
\sinh  G(z,a)}.
\end{equation}
In particular, on each level line $G(z, a)=c$,
$$
\rho(z)=- \frac{1}{ 2 \sinh c} \frac{\partial G} {\partial n_z}(z,
a),
$$
i.e., $\rho$ is proportional to the harmonic measure with respect to
$a$, or equivalently to the Poisson kernel of the enclosed domain.
\end{prop}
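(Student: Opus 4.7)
The plan is to reduce to the unit disk via the Riemann map and do a direct computation, since every quantity in the statement is a conformal invariant (up to a normalization constant) of the pair $(\Omega,a)$.

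First I would fix the Riemann map $\phi:\Omega\to\mathbb{D}$ with $\phi(a)=0$ and $\phi'(a)>0$. The standard identity $G(z,a)=-\log|\phi(z)|$ then yields, after an elementary computation,
$$
\frac{\partial G}{\partial z}(z,a)=-\frac{1}{2}\,\frac{\phi'(z)}{\phi(z)},\qquad \Bigl|\frac{\partial G}{\partial z}(z,a)\Bigr|=\frac{|\phi'(z)|}{2|\phi(z)|}.
$$
On the other hand, $\sinh G(z,a)=\sinh(-\log|\phi(z)|)=\dfrac{1-|\phi(z)|^2}{2|\phi(z)|}$. Dividing these expressions, the common factor $2|\phi(z)|$ cancels and one obtains
$$
\frac{|\partial G/\partial z(z,a)|}{\sinh G(z,a)}=\frac{|\phi'(z)|}{1-|\phi(z)|^2},
$$
which is precisely the pullback under $\phi$ of the hyperbolic metric on the disk, i.e.\ the density $\rho(z)$ of the Poincar\'e metric on $\Omega$ (in the normalization implicit in \eqref{eq:rhosinhG}, namely the one with $\rho=(1-|w|^2)^{-1}$ on $\mathbb{D}$). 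This proves the first identity.

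For the second assertion, I would use that along a level line $\{G(\cdot,a)=c\}$ the tangential derivative of $G$ vanishes, so the full gradient equals the normal derivative: $|\nabla G|=|\partial G/\partial n_z|$. Combined with $|\partial G/\partial z|=\tfrac{1}{2}|\nabla G|$ and the sign of $\partial G/\partial n_z$ (the Green's function decreases in the outward normal direction, so $\partial G/\partial n_z<0$), this gives $|\partial G/\partial z|=-\tfrac12\partial G/\partial n_z$. Substituting this and $\sinh G=\sinh c$ into the previous formula yields
$$
\rho(z)=-\frac{1}{2\sinh c}\,\frac{\partial G}{\partial n_z}(z,a),
$$
and the proportionality with the Poisson kernel $P(z,a)=-\tfrac{1}{2\pi}\partial G/\partial n_z$ of $\Omega$ (equivalently, the harmonic measure from $a$) is immediate.

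There is no real obstacle here; the only thing to be careful about is consistency of normalization of the Poincar\'e metric, and the sign of the normal derivative so that the right-hand side is manifestly positive.
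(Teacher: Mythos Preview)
Your proof is correct and essentially identical to the paper's: the paper uses the map $\zeta=e^{-\mathcal{G}(z,a)}$, which is precisely your Riemann map $\phi$, and then performs the same quotient computation with $\sinh(-\log|\phi|)=(1-|\phi|^2)/(2|\phi|)$. Your derivation of the level-line statement via $|\partial G/\partial z|=\tfrac12|\nabla G|=-\tfrac12\,\partial G/\partial n_z$ is a bit more explicit than the paper's, which simply states it as an immediate consequence.
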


\begin{proof}
The proof is classical and we recall it for the convenience of the
reader. With ${\mathcal G}(z,a)=G(z,a)+\I G^*(z,a)$ an analytic
completion of $G(z,a)$ with respect to $z$, the map $\displaystyle z
\mapsto \zeta = e^{-{\mathcal G}(z,a)} $ sends $\Omega$ onto the
unit disk, for which the Poincar\'e metric is
$$
d\sigma_{\zeta}= \frac{\vert d \zeta \vert}{1- \vert \zeta \vert^2}.
$$
By conformal invariance
$$
d\sigma_z = \rho(z) \vert dz\vert = \frac{\vert d(e^{-{\mathcal
G}(z,a)}) \vert}{1- \vert e^{-{\mathcal G}(z,a)}\vert^2}
$$
$$
= \frac{ e^{-G(z, a)}\, \vert 2 \frac{\partial G}{\partial z}(z,
a)\vert }{1-  e^{-2G(z,a)}} \vert dz \vert =\frac{\vert
\frac{\partial G}{\partial z }(z, a)\vert}{ \sinh \left(
G(z,a)\right)}\vert dz \vert,
$$
which is the desired result.
\end{proof}

\begin{rem}
If $\Omega$ is multiply connected the above expression for the
Poincar\'{e} metric still holds if $G(z,\zeta)$ is interpreted as
the Green's function for the universal covering surface of $\Omega$.
This is not single-valued in $\Omega$, but the combination appearing
in the expression for $\rho$ is single-valued.
\end{rem}

As an application, we reprove a result due to T.~Kubo \cite{Kubo}.
With $\Omega$ simply connected as above, let $a \in \Omega$, let
$\mu$ be a positive measure with compact support in $\Omega$ and
choose $c>0$ so that the support of $\mu$ is contained in
$\displaystyle K= \{ z\in \Omega: G(z, a)\geq c \}$. We define the
Green's potential $G^{\mu}$ of $\mu$ by
$$
G^{\mu} (z)= \int G(z, \zeta)\, d\mu(\zeta).
$$
The functions $G$, $G^{\mu}$ are both harmonic in $\Omega \setminus
K$ and vanish on $\partial \Omega$. Hence, with $d\sigma_z$ the
Poincar\'{e} metric as above,
$$
\int_{\partial K}G^{\mu} (z)d\sigma_z = -\frac{1}{ 2 \sinh c}
\int_{\partial K} G^{\mu} (z) \frac{\partial G}{\partial n}(z,a)ds
$$
$$
=\frac{1}{ 2 \sinh c} \int_{\partial (\Omega\setminus K)} G^{\mu}
(z) \frac{\partial G}{\partial n}(z, a)ds =  \frac{1}{ 2 \sinh c}
\int_{\partial (\Omega\setminus K)}G(z,a) \frac{\partial
G^{\mu}}{\partial n}ds
$$
$$
= - \frac{c}{ 2 \sinh c} \int_{\partial K} \frac{\partial
G^{\mu}}{\partial n}ds =\frac{\pi c}{\sinh c}\mu(K).
$$

Thus, under the above assumptions,
\begin{thm}
The average of the Green potential $G^{\mu}$, with respect to the
Poincar\'e metric, on a level line $G(\cdot, a) = c$ equals a
constant (independent of $\mu$) times the total mass of $\mu$.
\end{thm}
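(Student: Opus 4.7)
The plan is to evaluate the boundary integral $\int_{\partial K}G^\mu(z)\,d\sigma_z$ in closed form using the preceding proposition together with two integrations by parts, exploiting the fact that both $G^\mu$ and $G(\cdot,a)$ are harmonic in the annular region $\Omega\setminus K$, both vanish on $\partial\Omega$, and $G(\cdot,a)$ is the constant $c$ on $\partial K$.

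First I would invoke the preceding proposition to rewrite the Poincar\'e density on the level line $\partial K=\{G(\cdot,a)=c\}$ as
$$
d\sigma_z=-\frac{1}{2\sinh c}\,\frac{\partial G}{\partial n_z}(z,a)\,ds,
$$
with $n_z$ the outward normal to $K$. This converts the Poincar\'e-weighted integral into a standard boundary integral involving $\partial_n G(\cdot,a)$.

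Next I would apply Green's second identity to the pair $G^\mu,\,G(\cdot,a)$ on $\Omega\setminus K$. Since $\mathrm{supp}\,\mu\subset K$ and the pole $a$ of $G(\cdot,a)$ lies in $K$, both functions are harmonic in $\Omega\setminus K$, and both vanish on $\partial\Omega$, so the $\partial\Omega$ contributions drop out. On $\partial K$ the factor $G(\cdot,a)\equiv c$ pulls out, yielding
$$
\int_{\partial K}G^\mu\,\frac{\partial G}{\partial n_z}(\cdot,a)\,ds \;=\; c\int_{\partial K}\frac{\partial G^\mu}{\partial n_z}\,ds,
$$
after accounting for the reversal of the outward normal between $K$ and $\Omega\setminus K$. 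Then the divergence theorem in $K$, combined with $\Delta G^\mu=-2\pi\mu$ (which follows from the normalization $G(z,\zeta)=-\log|z-\zeta|+\text{harmonic}$), evaluates the remaining integral to $-2\pi\mu(K)=-2\pi\mu(\Omega)$. Collecting the constants gives
$$
\int_{\partial K}G^\mu(z)\,d\sigma_z \;=\; \frac{\pi c}{\sinh c}\,\mu(\Omega),
$$
which is the asserted proportionality. As a sanity check, applying the same divergence-theorem step to the case $\mu=\delta_a$ (so that $G^\mu\equiv c$ on $\partial K$) yields Poincar\'e length $\pi/\sinh c$ for the level curve, so the genuine normalized average on $\partial K$ is simply $c\,\mu(\Omega)$.

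The only real difficulty is orientation bookkeeping: one must carefully track the sign reversal of the outward normal between $K$ and $\Omega\setminus K$, and keep the distributional identity $\Delta(-\log|z|)=-2\pi\delta_0$ consistent, in order to arrive at the positive constant $\pi c/\sinh c$. No conceptual obstacle arises, since the harmonicity and the constancy of $G(\cdot,a)$ on each of the two pieces of $\partial(\Omega\setminus K)$ reduce the argument to a clean double integration by parts.
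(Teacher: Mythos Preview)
Your proposal is correct and follows essentially the same approach as the paper: rewrite $d\sigma_z$ via the preceding proposition, apply Green's identity on the annular region $\Omega\setminus K$ where both $G^\mu$ and $G(\cdot,a)$ are harmonic and vanish on $\partial\Omega$, use the constancy $G(\cdot,a)\equiv c$ on $\partial K$, and evaluate the remaining flux integral as $-2\pi\mu(K)$. The paper's computation is slightly terser but the steps and the final constant $\pi c/\sinh c$ are identical.
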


For the Poincar\'e metric in a simply connected domain we have the
estimates (see, for example, \cite{Milnor})
\begin{equation}\label{eq:drhod}
\frac{1}{4{d}(z)}\leq \rho(z)\leq \frac{1}{{d}(z)},
\end{equation}
where the lower bound is a consequence of the Koebe one-quarter
theorem. Compare also (\ref{eq:logdc}) below. By (\ref{eq:rhosinhG})
this gives the following estimate of the distance to the boundary
directly in terms of the Green function:
$$
\frac{\sinh G(z,a)}{4\vert \frac{\partial G}{\partial z}(z, a)\vert}
\leq d(z)\leq \frac{\sinh G(z,a)}{\vert\frac{\partial G}{\partial
z}(z, a)\vert}.
$$
See \cite{Milnor} for possible applications of such estimates to
computer graphics.

Also the Bergman kernel can provide estimates for the distance to
the boundary, even in the multiply connected case.
In fact, setting
$$
K^{(m,n)}(z, \zeta)= \frac{ \partial^{m+n}}{\partial z^m
\partial \bar \zeta ^n}K(z,  \zeta)
$$
we have, according to P.~Davis and H.~Pollak \cite{Davis}, the Cauchy-Hadamard type
formula
\begin{equation} \label{Davis1}
\frac{1}{{d} (z)}= \limsup_{n\to \infty}  \frac{e}{n}
\left(K^{(n,n)}(z,  z)\right)^{\frac{1}{2n}}
= \limsup_{n\to \infty}
\left(\frac{1}{(n!)^2}K^{(n,n)}(z,  z)\right)^{\frac{1}{2n}}
\end{equation}
for any $z\in \Omega$.

In the simply connected case, this gives an interesting formula for
the distance to the boundary. Let
$\phi(\zeta)=\exp(-{\mathcal{G}}(\zeta,z))= a_1 (\zeta-z)+
a_2(\zeta-z)^2+\cdots$, $a_1>0$, be the conformal map from $\Omega$
to the unit disk which takes a given point $z$ to the origin. Then
\begin{equation} \label{Davis2}
\frac{1}{d(z)}= \limsup_{n\to \infty} \left[ \sum_{k= 0}^n (k+1)
\left| \sum_{j = k}^n \frac{(n-j+1)}{j!} a_{n-j+1} \frac{d^j
}{d\zeta^j}\big|_{\zeta=z}
\phi(\zeta)^{k}\right|^2\right]^{\frac{1}{2n}}.
\end{equation}


\subsection{Taylor coefficients}\label{sec:taylor coefficients}

Above we saw how the distance to the boundary controls the
Poincar\'{e} metric. Below we shall see more generally how this
distance controls the Taylor coefficients of the Green's function,
which in the simply connected case embody the Poincar\'{e} metric.
For any multiply connected domain $\Omega$, let $H(z,\zeta)$ be the
regular part of the Green's function, defined by
\begin{equation}\label{eq:GH}
G(z,\zeta)=-\log|z-\zeta|+ H(z, \zeta),
\end{equation}
and let ${\mathcal G}(z,\zeta)$ and ${\mathcal H}(z,\zeta)=H(z,
\zeta)+\I H^*(z, \zeta)$ denote analytic completions of $G(z,\zeta)$
and $H(z,\zeta)$ with respect to $z$, for fixed $\zeta$. Then
${\mathcal G}(z,\zeta)$ is multivalued, but ${\mathcal H}(z,\zeta)$
is a perfectly well-defined analytic function (in $z$), uniquely
determined after the normalization $\Im{\mathcal H}(\zeta,\zeta)=0$,
henceforth assumed. Thus ${\mathcal H}(z,\zeta)$ can be expanded in
a power series around $z=\zeta$:
\begin{equation}\label{eq:Hexpansion}
{\mathcal H}(z,\zeta)=c_0(\zeta) + c_1(\zeta)(z-\zeta) +
c_2(\zeta)(z-\zeta)^2 +\dots,
\end{equation}
where $c_0$ is real by the normalization chosen.

The first few of the coefficients $c_n(\zeta)$ are domain function
which have geometric and physical relevance. The constant term,
$c_0(\zeta)$, is sometimes called the Robin constant and
$e^{-c_0(\zeta)}$ is a kind of capacity (if one allows
$\infty\in\Omega$ then $c_0(\infty)$ is the ordinary logarithmic
capacity of ${\mathbb{C}}\setminus \Omega$), cf. \cite{Sario-Oikawa}.

As follows from (\ref{eq:K}), $c_0(z)$ is related to the Bergman
kernel by
$$
-\Delta c_0(z) =4\pi K(z,z).
$$
Under conformal mappings $c_0(z)$ transforms in such a way that
$$
d\sigma=e^{-c_0(z)}|dz|
$$
is a conformally invariant metric (see further
Section~\ref{sec:connections}). When $\Omega$ is simply connected
this metric coincides with the Poincar\'{e} metric and with the
Bergman metric, normalized to be $d\sigma =\sqrt{\pi K(z,z)}|dz|$,
but for multiply connected domains none of these metrics are the
same. Note also that comparison with (\ref{eq:rhosinhG}) shows that
$$
{c_0(z)}= -\log\frac{\vert \frac{\partial G}{\partial z }(z,
a)\vert}{ \sinh G(z,a)}
$$
for all $z,a\in\Omega$ when $\Omega$ is simply connected. In the
multiply connected case we still have that the limit, as $z\to a$,
of the right member equals $c_0(a)$.

In a simply connected domain any conformally invariant metric has
constant Gaussian curvature, because the curvature transforms as a
scalar and the conformal group acts transitively on a simply
connected domain. For the above metrics the constant curvature is
negative: $\kappa_{\rm Gauss}=-4$. This means that $c_0$ satisfies
the Liouville equation $-\Delta c_0=4e^{-2c_0}$, and for the Bergman
and Schiffer kernels it means that
\begin{equation}\label{curvature7}
\frac{\partial^2 \log K(z,\zeta)}{\partial z \partial \bar\zeta} =
2\pi K(z, \zeta),\quad \frac{\partial^2 \log L(z,\zeta)}{\partial z
\partial \zeta} = -2\pi L(z, \zeta).
\end{equation}
We emphasize that these relations only hold for simply connected
domains. For multiply connected domains there are counterparts of
for example (\ref{curvature7}) involving also the zeros of the
Bergman kernel, see \cite{Schiffer-Hawley}, \cite{Hejhal},
\cite{Hejhal2} and Remark~\ref{rem:span} in the present paper.

The function $c_0(z)$ can be estimated in terms of the distance
$d(z)=d(z,\partial \Omega)$ to the boundary by
\begin{equation}\label{eq:logdc}
\log d(z)\leq c_0(z)\leq \log d(z)+A
\end{equation}
for some constant $A$. The lower bound is an elementary consequence
of the maximum principle combined with monotonicity properties of
$c_0(z)$ with respect to the domain. The upper bound depends on the
nature of the domain. If for example the domain is convex, one can
take $A=\log 2$, and for a general simply connected domain
(\ref{eq:logdc}) is the same as (\ref{eq:drhod}), i.e., $A=\log 4$
works. See \cite{Sario-Oikawa}, \cite{Ahlfors} for further discussions.

For the higher coefficients we have the following estimates, one of
which will be used in Section~\ref{sec:connections on double}.

\begin{lem}\label{lem:estcn}
For $n\geq 1$,
$$
|c_n(\zeta)|\leq\frac{1}{n d(\zeta,\partial\Omega)^n} \quad
(z\in\Omega).
$$
\end{lem}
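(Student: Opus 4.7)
The plan is to realize $\mathcal{H}(z,\zeta)$ as a boundary integral of logarithms, so that the factor $1/n$ in the target bound is produced directly by the Taylor expansion $\log(1+u)=\sum_{n\geq 1}(-1)^{n-1}u^n/n$.

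First, I would exploit that $H(z,\zeta)=G(z,\zeta)+\log|z-\zeta|$ is harmonic in $z$ throughout $\Omega$ and, since $G(\cdot,\zeta)$ vanishes on $\partial\Omega$, has boundary values $y\mapsto\log|y-\zeta|$. Combined with the symmetry $H(z,\zeta)=H(\zeta,z)$ (inherited from $G(z,\zeta)=G(\zeta,z)$), the harmonic-measure representation of the Dirichlet solution yields
\[
H(z,\zeta)=\int_{\partial\Omega}\log|z-y|\,d\omega_\zeta(y),
\]
where $\omega_\zeta$ is the harmonic measure of $\Omega$ at $\zeta$, a probability measure on $\partial\Omega$.

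Next I pass to the analytic completion in $z$ on the simply-connected disk $D=\{|z-\zeta|<d(\zeta)\}\subset\Omega$. For each $y\in\partial\Omega$ one has $|\zeta-y|\geq d(\zeta)>|z-\zeta|$, so the function
\[
\log\frac{z-y}{\zeta-y}=\log\Bigl(1+\frac{z-\zeta}{\zeta-y}\Bigr)
\]
is an unambiguously defined holomorphic function of $z\in D$ vanishing at $z=\zeta$ (principal branch). Integration against $d\omega_\zeta$ produces a holomorphic function on $D$ whose real part equals $H(z,\zeta)-c_0(\zeta)$ and which vanishes at $z=\zeta$; by uniqueness of the analytic completion normalized by $\Im\mathcal{H}(\zeta,\zeta)=0$, this must equal $\mathcal{H}(z,\zeta)-c_0(\zeta)$, giving
\[
\mathcal{H}(z,\zeta)-c_0(\zeta)=\int_{\partial\Omega}\log\frac{z-y}{\zeta-y}\,d\omega_\zeta(y).
\]

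Finally, the expansion
\[
\log\Bigl(1+\frac{z-\zeta}{\zeta-y}\Bigr)=\sum_{n\geq 1}\frac{(-1)^{n-1}}{n}\frac{(z-\zeta)^n}{(\zeta-y)^n}
\]
converges uniformly for $|z-\zeta|\leq r<d(\zeta)$ and $y\in\partial\Omega$, so I may exchange sum and integral and read off
\[
c_n(\zeta)=\frac{(-1)^{n-1}}{n}\int_{\partial\Omega}\frac{d\omega_\zeta(y)}{(\zeta-y)^n}.
\]
Since $|\zeta-y|\geq d(\zeta)$ for $y\in\partial\Omega$ and $\omega_\zeta(\partial\Omega)=1$, this gives $|c_n(\zeta)|\leq \frac{1}{n}\int_{\partial\Omega}|\zeta-y|^{-n}\,d\omega_\zeta(y)\leq 1/(n\,d(\zeta)^n)$, as required. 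The only delicate point, and what I would call the main obstacle, is Step~2: verifying that the boundary integral really is the analytic completion (uniqueness up to an imaginary constant, pinned down by the normalization) and checking that $|(z-\zeta)/(\zeta-y)|<1$ lets us justify term-by-term integration; both are standard and present no serious difficulty.
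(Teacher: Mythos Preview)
Your proof is correct and is essentially the paper's argument in different clothing: both arrive at the identity $nc_n(\zeta)=-\int_{\partial\Omega}(y-\zeta)^{-n}\,d\omega_\zeta(y)$ and then bound it using $|y-\zeta|\geq d(\zeta)$ and $\omega_\zeta(\partial\Omega)=1$. The paper obtains this formula by writing $nc_n(\zeta)$ as the Cauchy-type integral $\frac{1}{2\pi i}\oint 2\partial_z G(z,\zeta)(z-\zeta)^{-n}\,dz$ over a level curve $\{G(\cdot,\zeta)=\epsilon\}$, on which $dG$ vanishes so only ${}^*dG$ contributes, and letting $\epsilon\to 0$ (where $-\tfrac{1}{2\pi}{}^*dG(\cdot,\zeta)$ becomes harmonic measure at $\zeta$); your route via the Poisson representation of $H$ (exploiting the symmetry $H(z,\zeta)=H(\zeta,z)$) and the Taylor series $\log(1+u)=\sum(-1)^{n-1}u^n/n$ is a pleasant repackaging that makes the factor $1/n$ appear from the logarithm rather than from differentiating $\mathcal H$, but the underlying identity and estimate are identical.
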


\begin{proof}
The derivative of the Green's function has the Taylor expansion,
with respect to $z$,
$$
2\frac{\partial G(z,\zeta)}{\partial z} =-\frac{1}{z-\zeta}
+\sum_{n=1}^\infty nc_n(\zeta)(z-\zeta)^{n-1}.
$$
Thus, for any region $D\subset\Omega$ containing $\zeta$,
$$
nc_n(\zeta)= \frac{1}{2\pi\I} \int_{\partial D} 2\frac{\partial
G(z,\zeta)}{\partial z} \frac{dz}{(z-\zeta)^n} = \frac{1}{2\pi\I}
\int_{\partial D}  \frac{dG(z,\zeta)+\I ^*dG(z,\zeta)}{(z-\zeta)^n}
$$
On taking $D=\{z\in\Omega: G(z,\zeta)>\epsilon\}$, where
$\epsilon>0$, we have $dG(\cdot,\zeta)=0$ along $\partial D$, while
$-^*dG(\cdot,\zeta)$ can be considered as a positive measure of
total mass $2\pi$ on $\partial D$. By letting $\epsilon\to 0$ this
gives the desired estimates:
$$
|nc_n(\zeta)|= |\frac{1}{2\pi} \int_{\partial\Omega} \frac{
^*dG(z,\zeta)}{(z-\zeta)^n}|\leq
\frac{1}{d(\zeta,\partial\Omega)^n}.
$$
\end{proof}


\subsection{The Green's function by domain variations}

The coefficient $c_1$ in (\ref{eq:Hexpansion}) can be directly
obtained via the formula
$$
c_1(\zeta)= \frac{\I}{2}\int_{\partial\Omega} (\frac{\partial
G(z,\zeta)}{\partial z})^2 dz =\frac{1}{2\I}\int_{\partial\Omega}
|\frac{\partial G(z,\zeta)}{\partial n}|^2 d\bar{z},
$$
which follows from the residue theorem. Instead of the residue
theorem one may use the fact that $2\bar{c}_1$ is the gradient of
$c_0$, as is easily checked (see (\ref{eq:c1fromc0})), combined with
the Hadamard variational formula,
$$
\delta G_\Omega(z,\zeta) =\frac{1}{2\pi}\int_{\partial\Omega}
\frac{\partial G(\cdot,z)}{\partial n}\frac{\partial
G(\cdot,\zeta)}{\partial n} \delta n \,ds,
$$
which gives the infinitesimal change of the Green function under an
infinitesimal deformation $\delta n$ of $\partial\Omega$ in the
outward normal direction. Since $\delta H_\Omega(z,\zeta)=\delta
G_\Omega(z,\zeta)$, one obtains $\delta c_0(\zeta)$ by choosing
$z=\zeta$ above, and then the gradient of $c_0$ is obtained by
choosing $\delta n$ suitably. See \cite{Flucher}, Lemma~8.4, for
further details.

Here we wish to expand slightly on another use of the Hadamard
formula. To avoid infinitesimals one may replace $\delta$ by
$\frac{\delta}{\delta t}$, where $t$ is a time parameter, so that
$\frac{\delta n}{\delta t}$ means the velocity of $\partial\Omega$
in the normal direction. Of special interest is to take this normal
velocity proportional to the normal derivative of the Green's
function itself, say
$$
\frac{\delta n}{\delta t}=-\frac{\partial G(\cdot,a)}{\partial n},
$$
where $a\in \Omega$ is a fixed point. This is called Laplacian
growth, or Hele-Shaw flow with a point source (see, e.g.,
\cite{Gustafsson-Vasilev} for further information), and if
$\nabla(a)=\frac{\delta}{\delta t}$ denotes the corresponding
derivative, acting on domain functionals, the Hadamard formula gives
$$
\nabla(a)G_\Omega(b,c) =-\frac{1}{2\pi}\int_{\partial\Omega}
\frac{\partial G(\cdot,a)}{\partial n} \frac{\partial
G(\cdot,b)}{\partial n}\frac{\partial G(\cdot,c)}{\partial n} ds,
$$
hence that $\nabla(a)G_\Omega(b,c)$ is totally symmetric in $a$,
$b$, $c$.

This remarkable fact appears in a series of articles by M.~Mineev,
P.~Wiegmann, A.~Zabrodin, I.~Krichever, A.~Marshakov, L.~Takhtajan,
for example \cite{Mineev-Wiegmann},
\cite{Krichever-Marshakov-Zabrodin}, \cite{Takhtajan}, from a more
general perspective to be a consequence of two things. The first is
that the regular part $H(z,\zeta)$ of the Green's function is a double
variational derivative of an energy functional $F=F(\Omega)$, which
can be identified with the logarithm of a certain $\tau$-function
\cite{Kostov et al}. Precisely, $H_\Omega(a,b)=\nabla(a)\nabla(b)
F(\Omega)$, hence
$$
G_\Omega(a,b)=-\log|a-b|+\nabla(a)\nabla(b) F(\Omega),
$$
where
$$
F(\Omega)=\frac{1}{8\pi^2}\int_{{{\mathbb{D}}}(0,R)\setminus
\Omega}\int_{{{\mathbb{D}}}(0,R)\setminus \Omega} \log|z-\zeta| dA(z)
dA(\zeta),
$$
$dA$ denoting area measure and where $R$ is large enough, so that
$\Omega\subset{{\mathbb{D}}}(0,R)$. The second ingredient is that the
Dirchlet problem is ``integrable'' in the sense that
$$
\nabla(a)\nabla(b)=\nabla(b)\nabla(a).
$$
Clearly these two facts embody the total symmetry of $\nabla(a)G_\Omega(b,c)$.


\subsection{Level lines of harmonic functions as geodesics and
trajectories}\label{sec:levellines}

Here we shall interpret level lines of harmonic functions as
geodesics in riemannian manifolds and trajectories of hamiltonian
systems. Curvature of level lines and geodesics will be discussed in
Section~\ref{sec:connections on double}.

\begin{prop}\label{prop:geodesic}
Let $u$ be harmonic in some domain, $u^*$ a harmonic conjugate of
$u$ and let $\varphi >0$ be any smooth function in one real
variable. Then, away from critical points of $u$, the level lines of
$u$ are geodesics for the metric
$$
d\sigma =\varphi (u^*)|\nabla u||dz|.
$$
\end{prop}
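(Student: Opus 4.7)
The plan is to reduce to a model case by a conformal change of variables. Away from critical points of $u$, consider the holomorphic function $f = u + \I u^*$; by the Cauchy--Riemann equations $f'(z) = u_x - \I u_y$, so $|f'(z)|^2 = |\nabla u(z)|^2$ and $f$ is a local biholomorphism. Under the substitution $w = f(z)$, writing $w = u + \I v$, a level line of $u$ in the $z$-plane corresponds to a vertical segment $\{\Re w = \mathrm{const}\}$ in the $w$-plane.

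Next I would transport the metric. Since $|dz| = |dw|/|f'(z)| = |dw|/|\nabla u(z)|$ and $u^* = v$, one obtains
\begin{equation*}
d\sigma = \varphi(u^*)\,|\nabla u|\,|dz| = \varphi(v)\,|dw|,
\end{equation*}
so the problem reduces to showing that vertical line segments are geodesics for the conformal metric $d\tilde\sigma = \varphi(v)\,|dw|$ on the image.

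The last step is a direct length-minimization. For any curve $\gamma$ joining $(u_0,v_1)$ to $(u_0,v_2)$ in the $w$-plane, parametrized as $(u(t),v(t))$,
\begin{equation*}
\mathrm{length}_{\tilde\sigma}(\gamma) = \int \varphi(v(t))\sqrt{u'(t)^2 + v'(t)^2}\,dt \geq \int \varphi(v(t))\,|v'(t)|\,dt \geq \Bigl|\int_{v_1}^{v_2}\varphi(v)\,dv\Bigr|,
\end{equation*}
with equality throughout iff $u'(t)\equiv 0$ and $v$ is monotonic. Hence the vertical segment is the unique length-minimizer between its endpoints (up to reparametrization) and, in particular, a geodesic.

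The only mild subtlety, which I would address at the end, is that on a multiply connected domain the harmonic conjugate $u^*$ and hence $f$ may be multivalued, so the change of variables is only local. But being a geodesic is a local property: the argument above applies in a neighborhood of every point where $\nabla u \neq 0$, and these local pieces patch together to show that the connected components of level lines of $u$ lying in $\{\nabla u \neq 0\}$ are geodesics for $d\sigma$. There is no real obstacle beyond organizing this local-to-global step carefully.
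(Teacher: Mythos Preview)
Your proof is correct and follows essentially the same approach as the paper's: both arguments rest on the inequality $\varphi(u^*)\,|\nabla u^*|\,|dz|\ge \varphi(u^*)\,du^*$, yielding that the level line realizes the lower bound $\int\varphi(v)\,dv$. The only difference is presentational---you make the conformal change $w=u+\I u^*$ explicit and work in the $w$-plane, whereas the paper carries out the same estimate directly in the $z$-coordinates without naming the change of variables.
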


\begin{proof} Let $\Phi$ be a primitive function of $\varphi$ and let
$\gamma$ be a level line of $u$, with $\nabla u\ne 0$ on $\gamma$.
By the  Cauchy-Riemann equations, this level line is simultaneously
an integral curve of $\nabla u^*$, and $|\nabla u|=|\nabla u^*|$.
Thus, if $z_0$ and $z_1$ denote the end points of $\gamma$ ordered
so that $u^*(z_0)<u^*(z_1)$,
$$
\int_\gamma d\sigma =\int_\gamma \Phi'(u^*)|\nabla u^*||dz|
=\int_\gamma \Phi'(u^*)du^*=\Phi(u^*(z_1))-\Phi(u^*(z_0)).
$$
Since, along a curve in general, $|\nabla u^*||dz|\geq du^*$,
integration of $d\sigma$ along any curve from $z_0$ to $z_1$ will
give a value $\geq \Phi(u^*(z_1))-\Phi(u^*(z_0))$. Thus $\gamma$ is
a geodesic.
\end{proof}

As a simple remark on trajectories, the level lines of any
(smooth) function $u$ in a domain $\Omega\subset{\mathbb{C}}$ are
trajectories of the hamiltonian system with phase space $\Omega$,
symplectic form $\omega=dx\wedge dy$ and hamiltonian function $u$
(see \cite{Arnold} for the terminology). The Hamilton equations then
are $\frac{d x}{d t}= \frac{\partial u}{\partial y}$, $\frac{d y}{d
t}=-\frac{\partial u}{\partial x}$ or, in complex form,
\begin{equation}\label{eq:hamiltonian}
\frac{dz}{dt}=-2\I\frac{\partial u}{\partial \bar z}.
\end{equation}
Such a kind of hamiltonian system describes for example the motion
of a point vortex in a plane domain, see \cite{Lin}, \cite{Newton}
and Section~\ref{sec:connections on double} below.

The above equations guarantee that the motion is along the level
lines of $u$. Assume now that $u$ is harmonic. Then
(\ref{eq:hamiltonian}) gives also
\begin{equation}\label{eq:Newton}
\frac{d^2 z}{dt^2}=-2\I\frac{\partial^2 u}{\partial \bar
z^2}\frac{d\bar z}{dt} =4\frac{\partial^2 u}{\partial \bar
z^2}\frac{\partial u}{\partial z} =-2\frac{\partial V}{\partial
\bar{z}},
\end{equation}
where
$$
V =-\frac{1}{2}|\nabla u|^2=-2\frac{\partial u}{\partial
z}\frac{\partial u}{\partial \bar z}.
$$
Notice that $V$ is real-valued and that the right-hand side of
(\ref{eq:Newton}) is minus the gradient of $V$. Therefore we can
think of (\ref{eq:Newton}) as an ordinary newtonian system for the
motion of a unit point mass in the potential $V$. Thus

\begin{prop}\label{prop:Newton}
The level lines of any harmonic function $u$ are, away from critical
points, trajectories for the newtonian system with potential energy
$-\frac{1}{2}|\nabla\,u|^2$.
\end{prop}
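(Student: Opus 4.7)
The plan is to derive the second-order equation of motion from the Hamiltonian form (\ref{eq:hamiltonian}) and recognize it, using harmonicity of $u$, as Newton's equation with the stated potential $V=-\tfrac{1}{2}|\nabla u|^2$. The fact that level lines are trajectories of the Hamiltonian system with hamiltonian $u$ is already noted just before (\ref{eq:hamiltonian}): the vector $\dot z = -2\I\,\partial u/\partial \bar z$ is nonzero and perpendicular to $\nabla u$ at every noncritical point, so the flow line through any $z_0$ is contained in the level line of $u$ through $z_0$. This takes care of the kinematic side.

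For the dynamical side, I would differentiate (\ref{eq:hamiltonian}) in $t$, obtaining $\ddot z = -2\I u_{\bar z\bar z}\,\dot{\bar z} - 2\I u_{z\bar z}\,\dot z$. The cross term vanishes because $u_{z\bar z} = \tfrac{1}{4}\Delta u = 0$. Since $u$ is real, $\dot{\bar z} = \overline{\dot z} = 2\I u_z$, so $\ddot z = 4 u_{\bar z\bar z} u_z$, in agreement with the middle expression of (\ref{eq:Newton}). To identify the right-hand side as minus a gradient, I would compute $\partial_{\bar z}V$ from $V = -2 u_z u_{\bar z}$: one obtains $\partial_{\bar z} V = -2 u_{z\bar z} u_{\bar z} - 2 u_z u_{\bar z\bar z}$, and harmonicity kills the first term, leaving $\partial_{\bar z} V = -2 u_z u_{\bar z\bar z}$. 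Hence $\ddot z = -2\,\partial V/\partial \bar z$.

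The final step is to translate this equation into real coordinates. For a real-valued $V$, the Wirtinger identity gives $2\,\partial V/\partial \bar z = V_x + \I V_y$, which is precisely the complex encoding of the real gradient $\nabla V = (V_x, V_y)$. Therefore $\ddot z = -2\,\partial V/\partial \bar z$ is exactly Newton's second law $\ddot{\vec x} = -\nabla V$ for a unit point mass moving in the potential $V$, as required.

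I do not expect any genuine obstacle: the argument is a direct Wirtinger calculation, and the only non-trivial input is harmonicity, which is used twice, once to eliminate the $u_{z\bar z}\dot z$ term from the differentiation of the Hamiltonian ODE, and once to eliminate the $u_{z\bar z} u_{\bar z}$ term from $\partial_{\bar z} V$. If one preferred a more conceptual route, one could instead verify directly that the total energy $\tfrac{1}{2}|\dot z|^2 + V = \tfrac{1}{2}|\dot z|^2 - \tfrac{1}{2}|\nabla u|^2$ is conserved along Hamiltonian trajectories (again using $\Delta u = 0$) and then appeal to the Hamilton--Jacobi framework, but the computational route above is the shortest.
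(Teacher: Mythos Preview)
Your proof is correct and follows essentially the same route as the paper: differentiate the Hamiltonian equation (\ref{eq:hamiltonian}), use harmonicity to drop the $u_{z\bar z}$ term, substitute $\dot{\bar z}=2\I u_z$, and identify the result as $-2\partial V/\partial\bar z$ with $V=-2u_zu_{\bar z}$. You are a bit more explicit than the paper in writing out both chain-rule terms and in spelling out the real-variable translation $2\partial V/\partial\bar z=V_x+\I V_y$, but the argument is the same.
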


\begin{rem}
The proposition generalizes to the case that $\Delta u = c$,  $c$
constant, with $V$ changed to
$$
V =-\frac{1}{2}|\nabla u|^2 +cu.
$$
\end{rem}

If we want to put Proposition~\ref{prop:Newton} into a hamiltonian
formulation, the domain $\Omega$ takes the role of configuration
space, while phase space is $\Omega\times {\mathbb{C}}$. The kinetic
energy is $T=\frac{1}{2}|\frac{dz}{dt}|^2$ and the hamiltonian
$$
H=T+V,
$$
to be considered as a function of position $q=z$ and momentum
$p=\frac{dz}{dt}$. The Hamilton equations, written in complex
notation, are
\begin{equation}\label{eq:complexHamilton}
\frac{d q}{d t}= 2 \frac{\partial H}{\partial \bar{p}}, \quad
\frac{d p}{d t}=-2\frac{\partial H}{\partial \bar{q}}.
\end{equation}

Along a trajectory $t\mapsto (p(t),q(t))$ in phase space, $H(p,q)$
is constant, say $H=E$, where $E$ is the total energy. Now, the
principle of least action in the form of Maurpertuis, Euler,
Lagrange and Jacobi (see \cite{Arnold}, \cite{Frankel}), states that
the trace in configuration space of a hamiltonian trajectory of
constant energy $E$ is a geodesic for the Jacobi metric,
$$
d\rho = \sqrt{T} ds =\sqrt{E-V(z)}ds.
$$
Here $ds$ denotes the ordinary euclidean metric in $\Omega$, hence
arc-length along trajectories; in a more general context it would be
the metric in configuration space induced by the kinetic energy.

In our case $V =-\frac{1}{2}|\nabla u|^2$, and since we derived the
motion from (\ref{eq:hamiltonian}), $T=\frac{1}{2}|\nabla u|^2$.
Thus $E=0$ and the Jacobi metric becomes
$$
d\rho =\frac{1}{\sqrt{2}}|\nabla u|\,|dz|,
$$
i.e., the least action principle becomes an instance of
Proposition~\ref{prop:geodesic}.

The hamiltonian system (\ref{eq:complexHamilton}) or
(\ref{eq:Newton}) has of course many more trajectories (in
configuration space) than the level lines of $u$. In fact, through
any point $z$ there is one trajectory starting out with any
prescribed speed $\frac{dz}{dt}$. (Even if we ignore the
parametrization, the modulus $|\frac{dz}{dt}|$ of the speed affects
the trajectory as a point set). Some of these other trajectories
(one in each direction) can covered by the above analysis by mixing
$u$ with its harmonic conjugate, as follows.

Let $u^*$ be a harmonic conjugate of $u$ and let
$$
w=f(z)=u+\I u^*,
$$
so that $f$ is conformal away from critical points of $u$. Then, for
any $\theta\in{\mathbb{R}}$, $|\nabla u|=|2\frac{\partial u}{\partial
z}|=|f'(z)|= |e^{\I\theta}f'(z)|=|\nabla(\cos \theta\,u
-\sin\theta\,u^* )|$, so that $V$ remains unchanged if $u$ is
replaced by $\cos \theta\,u -\sin\theta\,u^*$. Thus the level lines
of all such functions are also trajectories, and it is easily seen
that they represent all trajectories with total energy $E=0$. In
terms of $f(z)$ we can express the conclusion as follows.

\begin{prop}
The trace in configuration space $\Omega$ of the trajectories on the
energy surface $E=0$ of the hamiltonian system
(\ref{eq:complexHamilton}) or (\ref{eq:Newton}) are exactly the
inverse images under the conformal map $w=f(z)=u+\I u^*$ of the
straight lines in the $w$-plane.
\end{prop}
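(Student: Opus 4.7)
My plan is to prove the two inclusions separately, working through the conformal map $w=f(z)=u+\I u^*$. Both directions will rely on the key observation already used in the paragraph preceding the proposition: the hamiltonian $H=\frac{1}{2}|p|^2-\frac{1}{2}|\nabla u|^2$ depends on $u$ only through $|\nabla u|^2=|f'(z)|^2$, and this quantity is invariant under replacing $f$ by $e^{\I\theta}f$.

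For the forward direction, a straight line in the $w$-plane can be written $\{w:\Re(e^{\I\theta}w)=c\}$ for suitable $\theta\in\BR$ and $c\in\BR$. Its preimage under $f$ is the level set $\{z:u_\theta(z)=c\}$ where $u_\theta=\cos\theta\cdot u-\sin\theta\cdot u^*=\Re(e^{\I\theta}f)$. Since $|\nabla u_\theta|=|e^{\I\theta}f'(z)|=|\nabla u|$, the potential $V=-\frac{1}{2}|\nabla u|^2$ equals $-\frac{1}{2}|\nabla u_\theta|^2$. Applying \propref{prop:Newton} with $u$ replaced by the harmonic function $u_\theta$, I conclude that level lines of $u_\theta$ are trajectories of the newtonian equation (\ref{eq:Newton}), and hence of (\ref{eq:complexHamilton}). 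On such a trajectory the first-order description (\ref{eq:hamiltonian}) adapted to $u_\theta$ gives $\dot z=-\I\overline{e^{\I\theta}f'(z)}=-\I e^{-\I\theta}\overline{f'(z)}$, so $T=\frac{1}{2}|\dot z|^2=\frac{1}{2}|f'(z)|^2=-V$, which puts the trajectory on the energy surface $E=0$.

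For the reverse direction, fix any initial condition $(z_0,p_0)\in\Omega\times\BC$ on the surface $E=0$, so $|p_0|^2=|\nabla u(z_0)|^2=|f'(z_0)|^2$ (we may assume $f'(z_0)\neq 0$, i.e., that $z_0$ is not a critical point of $u$). As $\theta$ ranges over $[0,2\pi)$, the complex number $-\I e^{-\I\theta}\overline{f'(z_0)}$ sweeps out the entire circle of radius $|f'(z_0)|$ centred at the origin, so there is a unique $\theta$ with
\[
p_0=-\I e^{-\I\theta}\overline{f'(z_0)}.
\]
By the forward direction just proved, the level line of $u_\theta$ through $z_0$, parametrized via (\ref{eq:hamiltonian}) applied to $u_\theta$, is an $E=0$ trajectory starting at $(z_0,p_0)$. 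By uniqueness of solutions to the Hamilton ODE (\ref{eq:complexHamilton}), this level line coincides with the given trajectory. Its image under $f$ is the intersection of $f(\Omega)$ with $\{w:\Re(e^{\I\theta}w)=u_\theta(z_0)\}$, i.e., (an arc of) a straight line.

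No single step should be hard: the only subtle point is verifying that the parametrizations match, i.e., that the velocity $-\I e^{-\I\theta}\overline{f'(z)}$ prescribed by the first-order equation on the level line of $u_\theta$ is consistent with the underlying second-order system driven by $V=-\frac{1}{2}|\nabla u|^2$. This consistency is exactly the content of the derivation of (\ref{eq:Newton}) from (\ref{eq:hamiltonian}), applied with $u$ replaced by $u_\theta$, and it is legitimate precisely because the two potentials $-\frac{1}{2}|\nabla u|^2$ and $-\frac{1}{2}|\nabla u_\theta|^2$ agree.
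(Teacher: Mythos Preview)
Your proof is correct and follows essentially the same approach as the paper. The paper's argument is actually contained in the paragraph immediately preceding the proposition: it observes that $V$ is unchanged when $u$ is replaced by $\cos\theta\,u-\sin\theta\,u^*$, invokes \propref{prop:Newton}, and then simply asserts that ``it is easily seen that they represent all trajectories with total energy $E=0$.'' You have supplied precisely the details behind that ``easily seen'' --- matching initial velocities via the choice of $\theta$ and appealing to ODE uniqueness --- which is a welcome elaboration rather than a different route.
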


As an application of the above we may take $u$ to be the Green's
function of a domain $\Omega\subset{\mathbb{C}}$: $u(z)=G(z,a)$, $a\in
\Omega$ fixed. With
$\Omega'=\Omega\setminus\{z_1(a),\dots,z_{{\texttt{g}}}(a)\}$, where
$z_1(a),\dots,z_{{\texttt{g}}}(a)$ are the critical points of
$G(z,a)$, we conclude from Proposition~\ref{prop:geodesic} that that
level sets of $G(z,a)$ are geodesics for
$$
d\sigma_1= |\nabla G(z, a)| |d z|,
$$
and
$$
d\sigma_2= \frac{|\nabla G(z, a)|}{G^*(z,a)}  |d z|.
$$
And, by changing the roles of $G$ and $G^*$ we see that the level
lines of the harmonic conjugate $G^*(z,a)$ are geodesics for
$$
d\sigma_3= \frac{|\nabla G(z, a)|}{G(z,a)}  |d z|
$$
(and for $d\sigma_1$). In the image region under
$f(z)=G(z,a)+\I G^*(z,a)$, the above geodesics for
$d\sigma_1$ correspond to geodesics of the euclidean metric and the
geodesics for $d\sigma_3$ correspond to geodesics for the
Poincar\'{e} metric in the right half-plane.


\section{Doubly connected domains}

\subsection{The general domain functions for an annulus}

This example is fundamental, it reveals in many respect the
essential ideas. Our exposition is based on ideas which  have been
elaborated in \cite{SF}, and previously in \cite {Maria}. Since we
shall only discuss conformally invariant questions it will be enough
work with annuli. We shall use the notation
$$
A _{a, b}= \{ z\in \BC: a< \vert z\vert < b \},
$$
$0<a<b$, for an annulus centered at the origin. The conformal type
is determined by the quotient $b/a$ (the modulus), so we can fix
either $a$ or $b$. Note also the conformal symmetries of $A_{a,b}$:
$z\mapsto e^{\I\theta}z$ for any $\theta\in{\mathbb{R}}$ and $z\mapsto
ab/z$, the latter being the conformal reflection about the symmetry
line $|z|=\sqrt{ab}$, which exchanges the outer and inner boundary
components.

Choosing the annulus to be $A_{1,R}$, where $R>1$, we can represent
the main domain functions in terms of elliptic functions. The
multivalued function $z\mapsto t=\log z$ lifts the annulus to the
strip $\{t\in{\mathbb{C}}: 0<\Re t< \log R\}$, which then represents
the universal covering surface of $A_{1,R}$. The lift map extends to
the Schottky double of the annulus, which is a torus, and takes it
onto the universal covering surface of the torus, namely
${\mathbb{C}}$. The covering transformations on ${\mathbb{C}}$ are
generated by $t\mapsto t+2\log R$ and $t\mapsto t+ 2\pi\I$, hence we
have a period lattice with half-periods
\begin{equation}\label{eq:periods}
\begin{cases}
\omega_1=\log R,\\
\omega_2=\I\pi.
\end{cases}
\end{equation}

One fundamental domain, symmetric around the origin, is
$$
F_0=\{t\in{{\mathbb{C}}}: -\log R<\Re t< \log R,\,-\pi<\Im t<\pi\}.
$$
Here the right half of this corresponds to the annulus $A_{1,R}$
itself, while the left half corresponds to the copy of the annulus
which makes up the back-side of the Schottky double. The
anti-holomorphic involution is the reflection in the imaginary axis:
$J(t)=-\bar{t}$. A homology basis, as chosen in
Section~\ref{sec:general}, can be taken to be $\alpha_1=[-\log R,
\log R]$, $\beta_1=[-\I\pi,\I\pi]$, as point sets. The orientations
of $\alpha_1$ and $\beta_1$ will however be opposite to the ordinary
orientations of the real and imaginary axes.

Sometimes it is advantageous to work in a fundamental domain whose
boundary is made up of preimages of the curves in the homology basis
chosen. Such a fundamental domain is
\begin{equation}\label{eq:F1}
F_1=\{t\in{{\mathbb{C}}}: 0<\Re t< 2\log R,\,0<\Im t<2\pi\}.
\end{equation}
It is often convenient to scale the period lattice so that
it is generated by $1$ and $\tau$, with $\Im\tau>0$, in place of
$2\omega_1$, $2\omega_2$. Then $\tau= \omega_2/\omega_2$, and in our
case we have
\begin{equation}\label{eq:tau}
\tau=\frac{\I\pi}{\log R}.
\end{equation}

We recall the standard elliptic functions associated to the given
period lattice. The Weierstrass $\wp$-function is
$$
\displaystyle{\wp(z)=\wp(z; 2\omega_1,\,2\omega_2) = \frac{1}{z^2} +
\sum_{\omega\ne 0} \left( \frac{1}{(z+ \omega)^2 }-
\frac{1}{\omega^2} \right)},
$$
and the $\zeta$- and $\sigma$-functions are
$$
\displaystyle{\zeta(z) = \frac{1}{z} + \sum_{\omega\ne 0} \left(
\frac{1}{z+ \omega }+ \frac{1}{\omega} + \frac{z}{\omega^2}\right)},
$$
$$
\displaystyle{\sigma(z)=z\prod_{\omega\ne 0} \left(1-
\frac{z}{\omega }\right)
\exp\left(\frac{z}{\omega}+\frac{z^2}{2\omega^2} \right)},
$$
where in all cases $\omega=2m_1\omega_1+ 2m_2\omega_2$ and
summations and products are taken over all $(m_1,m_2)\in \BZ \times
\BZ \setminus (0,0)$.

The above functions are related by
$$
\wp (z)=-\zeta'(z), \quad \zeta(z)=\frac{\sigma'(z)}{\sigma(z)},
$$
$\wp(z)$ is doubly periodic, $\zeta(z)$ acquires constants along the
periods,
$$
\zeta(z+2\omega_j)=\zeta(z)+2\eta_j,
$$
and for $\sigma(z)$  we have
$$
\sigma(z+2\omega_j)=-\sigma(z)e^{2\eta_j (z+\omega_j)}
$$
($j=1,2$).
The constants $\eta_j$ are given by $\eta_j=\zeta(\omega_j)$ and
they satisfy the Legendre relation
$$
\eta_1\omega_2-\eta_2\omega_1=\frac{\I\pi}{2}.
$$
In our case $\eta_1$ is real and positive, $\eta_2$ is purely
imaginary and the Legendre relation becomes
$$
\frac{\eta_1}{\log R}-\frac{\eta_2}{\I\pi}=\frac{1}{2\log R}.
$$
We record also the differential equation satisfied by $\wp(z)$:
$$
\wp'(z)^2 = 4 \wp^3 -g_2 \wp (z) -g_3,
$$
where
\begin{equation}\label{eq:g2g3}
g_2= 60\sum_{\omega\ne 0}\frac{1}{\omega^4},\quad g_3=
140\sum_{\omega\ne 0}\frac{1}{\omega^6}.
\end{equation}

As a first step we shall make some of the functions and
differentials appearing in Section~\ref{sec:general} explicit in the
annulus case, and then (in a later subsection) we shall study the
critical points of the Green's function in some detail. When we work
directly in the annulus, and with the Schottky double of it realized
in the same plane by reflection, we shall denote points by letters,
$z$, $w$, $a$, $b$ and similar. When we work on the universal
covering surface ${\mathbb{C}}$, with its period lattice, we shall
denote the corresponding points $t$, $s$, $u$, $v$. Thus $t=\log z$,
$s=\log w$, $u=\log a$, $v= \log b$.

The two versions of the abelian differentials of the third kind
discussed in Section~\ref{sec:general} are, on the universal
covering surface, given by
\begin{equation}\label{eq:upsilonzeta}
\upsilon_{u-v}(t)=(\zeta(t-u)-\zeta(t-v)+A)dt,
\end{equation}
\begin{equation}\label{eq:omegazeta}
\omega_{u-v}(t)=(\zeta(t-u)-\zeta(t-v)+B)dt,
\end{equation}
where $A$ and $B$ are constants (depending on $u,v$) chosen so that
$\upsilon_{u-v}$, $\omega_{u-v}$ get the desired periods. This means
that
$$
\int_{-\log R}^{\log R}(\zeta(t-u)-\zeta(t-v)+A)dt, \quad
\int_{-\pi\I}^{\pi\I}(\zeta(t-u)-\zeta(t-v)+A)dt
$$
are both purely imaginary, and
$$
\int_{-\log R}^{\log R}(\zeta(t-u)-\zeta(t-v)+B)dt=0.
$$
By straightforward calculations this
gives
\begin{equation}\label{eq:A}
A=\frac{\eta_1(u-v)}{\log R}+\frac{\Im(u-v)}{2\I\log R},
\end{equation}
\begin{equation}\label{eq:B}
B=\frac{\eta_1(u-v)+\pi\I m}{\log R},
\end{equation}
where the integer $m$ depends on the location of $u$, $v$ relative
to the preimage of $\alpha_1$ in the period lattice. With $u,v\in
F_0$, then $m=0$ if $u$ and $v$ are in the same component of
$F_0\setminus [-\log R,\log R]$, which is the case, for example, if
$\Im(u-v)=0$. Thus we see, in accordance with
Lemma~\ref{lem:omegaupsilon}, that $A=B$ when $v=J(u)$. One can also
achieve $m=0$ in more general situations by working in the
fundamental domain $F_1$, where $\alpha_1$ is represented by $[0,
2\log R]$, hence is part of the boundary.

Next we compute $V(t,s;u,v)$ by integrating $\upsilon_{u-v}$, see
(\ref{eq:Vupsilon}). The calculation is straightforward and the
result is
$$
V(t,s;u,v)=-\log\big|\frac{\sigma(t-u)\sigma(s-v)}{\sigma(t-v)\sigma(s-u)}\big|
$$
$$
-\frac{\eta_1\Re((t-s)(u-v))}{\log R} -\frac{\Im(t-s)\Im(u-v)}{2\log
R}.
$$
Pulling this back to the plane of the annulus, i.e., on substituting
$t= \log z$ etc., we get
$$
V(z,w;a,b)=-\log\big|\frac{\sigma(\log\frac{z}{a})\sigma(\log\frac{w}{b})}
{\sigma(\log\frac{z}{b})\sigma(\log\frac{w}{a})}\big|
-\frac{\eta_1\Re(\log\frac{z}{w}\log\frac{a}{b})}{\log R}
-\frac{\arg\frac{z}{w}\arg\frac{a}{b}}{2\log R}.
$$
We record also the final expressions for $\upsilon_{a-b}$ and
$\omega_{a-b}$ in the plane of the annulus:
$$
\upsilon_{a-b}=(\zeta(\log\frac{z}{a})-\zeta(\log\frac{z}{b})
+\frac{\eta_1\log\frac{a}{b}}{\log R}+\frac{\arg\frac{a}{b}}{2\I\log
R})\frac{dz}{z},
$$
$$
\omega_{a-b}=(\zeta(\log\frac{z}{a})-\zeta(\log\frac{z}{b})
+\frac{\eta_1\log\frac{a}{b}+\pi\I m}{\log R})\frac{dz}{z}.
$$

The Green's function is by (\ref{eq:GVJ}) just a special case of
$V$. Choosing $s=J(t)=-\bar t$, $v=J(u)=-\bar u$ and using the first
alternative in (\ref{eq:GVJ}) gives, on the universal covering
surface,
$$
G(t,u)=-\frac{1}{2}\log\big|\frac{\sigma(t-u)\sigma(-\bar{t}+\bar{u})}
{\sigma(t+\bar{u})\sigma(-\bar{t}-u}\big| -\frac{\eta_1\Re t\,\Re
u}{\log R}.
$$
Hence, in the annulus ($z,a\in A_{1,R}$),
$$
G(z,a)=-\frac{1}{2}\log\big|\frac{\sigma(\log\frac{z}{a})
\sigma(\log\frac{\bar{a}}{\bar{z}})}
{\sigma(\log(z\bar{a}))\sigma(\log\frac{1}{\bar{z}a})}\big|
-\frac{2\eta_1 \log |z|\log |a|}{\log R}.
$$
Expanding the $\sigma$-function in an infinite product gives us what we
would have obtained by the method of images (automorphization). The result is
\begin{equation}\label{eq:green-Crowdy}
G(z,a)=-\log|\frac{R(z-a)}{R^2-z\bar{a}}|-\log|\prod_{n=1}^\infty
\frac{(R^{2n}-\frac{z}{a})(R^{2n}-\frac{a}{z})}
{(R^{2n}-\frac{z\bar{a}}{R^{2n}})(R^{2n}-\frac{R^{2n}}{z\bar{a}})}|,
\end{equation}
see \cite{Courant}, \cite{Crowdy2}.

The Bergman kernel is in general
$$
K(z,\zeta)dz d\bar{\zeta} =\frac{1}{\pi}\frac{\partial
\omega_{\zeta-J({\zeta})}(z)}{\partial\bar{\zeta}}d\bar{\zeta}.
$$
Using (\ref{eq:omegazeta}), (\ref{eq:B}), this gives in the present
case
\begin{equation}\label{eq:K(t,u)}
K(t,u)=\frac{1}{\pi}(\wp(t+\bar{u})+ \frac{\eta_1}{\log R})
\end{equation}
for $t,u\in{\mathbb{C}}$, and
\begin{equation}\label{eq:K(z,a)}
K(z,a)=\frac{1}{\pi z\bar{a}}(\wp(\log (z\bar a))+
\frac{\eta_1}{\log R})
\end{equation}
in the annulus ($z,a\in A_{1,R}$). This agrees with expressions
derived by Zarankiewicz \cite {Zar}. See also \cite{Fay} (p.133) and \cite{Bergman}.

Finally, we elaborate the Schottky-Klein prime function in terms of
elliptic functions (see \cite{Crowdy2} for representations in terms
of Poincar\'{e} series). It is obtained by combining
(\ref{eq:expEEEE}) with (\ref{eq:omegazeta}) and (\ref{eq:B}). The
result of that is
$$
\exp\int_s^t
\omega_{u-v}=\frac{\sigma(t-u)\sigma(s-v)}{\sigma(t-v)\sigma(s-u)}
\exp\left[\frac{\eta_1(t-s)(u-v)}{\log R}+\frac{2\pi\I m(t-s)}{\log
R}\right],
$$
which is to be identified with
$\displaystyle\frac{E(t,a)E(s,v)}{E(t,v)E(s,u)}$. Working in the
fundamental domain $F_1$ in (\ref{eq:F1}), which allows $m=0$, this
gives
$$
E(t,u)=\sigma(t-u)\exp\left[\frac{\eta_1 tu}{\log R}\right]
f(t)f(u),
$$
for some one variable function $f$. To identify $f(t)$ we
differentiate with respect to $t$ at $t=u$:
$$
\frac{\partial E}{\partial t}|_{t=u} E(t,u)
=\sigma'(0)\exp\left[\frac{\eta_1 u^ 2}{\log R}\right]f(u)^2.
$$
Since $E(t,u)$ is to behave like $t-u$ at $t=u$ the above
derivative must be $=1$. This gives
$\displaystyle f(u)=\pm\exp\left[\frac{\eta_1 u^2}{2\log R}\right]$.
Therefore actually
$$
E(t,u)=\sigma(t-u)\exp\left[ -\frac{\eta_1 (t-u)^ 2}{2\log R}\right],
$$
and in the plane of the annulus,
$$
E(z,a)=\sigma(\log\frac{z}{a})\exp\left[ -\frac{\eta_1}{2\log R}(\log\frac{z}{a})^ 2\right].
$$


\subsection{Eisenstein series}

For further need we recall the following classical
Eisenstein series and other arithmetical functions \cite{Koblitz}.

\begin{equation}\label{E2}
E_2 (\tau)= 1-24 \sum_{n=1}^{\infty}\frac{n q^{n}}{1-q^{n}} = 1-24
\sum_{n=1}^{\infty} \sigma_1 (n)q^{n},
\end{equation}
\begin{equation}
E_4 (\tau)= 1+240 \sum_{n=1}^{\infty}\frac{n^3 q^{n}}{1-q^{n}} =
1+240 \sum_{n=1}^{\infty} \sigma_3 (n)q^{n},
\end{equation}
\begin{equation}
E_6 (\tau)= 1-504 \sum_{n=1}^{\infty}\frac{n^5 q^{n}}{1-q^{n}}   =
1-504 \sum_{n=1}^{\infty} \sigma_5 (n)q^{n},
\end{equation}
where
\begin{equation*}
\sigma_k (n)= \sum_{d|n}d^k,
\end{equation*}
and the ``nome'' $q$ always is related to $\tau$ by
\begin{equation*}
q=e^{2\pi\I \tau}.
\end{equation*}
Thus $|q|<1$.

It is well known that $E_4 (\tau)$ and $E_6 (\tau)$ are modular
forms of weights 4 and 6 for the modular group ${\G}= {\SL}$. This
is a consequence of Lipschitz' formula, which asserts that for any
integer $k\geq 2$, and with $B_{2k}$ being the $k$:th Bernoulli
number,
$$
E_{2k}(\tau)= 1- \frac{2k}{B_{2k}} \sum_{n=1}^{\infty} \frac{n^{k-1}
q^n}{1- q^n}
$$
$$
= 1- \frac{2k}{B_{2k}} \sum_{m, n=1}^{\infty}
\sigma_{k-1} (n) q^n =   1- \frac{2k}{B_{2k}} \sum_{m,n=1}^{\infty}
\frac{1}{ (m\tau+ n)^{2k}}.
$$
$E_2 (\tau)$ is not modular but
\begin{equation}\label{E_2^*}
E_2^* (\tau)= 1-24 \sum_{n=1}^{\infty}\frac{nq^{n}}{1-q^{n}}
-\frac{3}{\pi \Im\tau} q^3 -168 q^4- \cdots
\end{equation}
$$
= 1-\frac{3}{\pi \Im\tau}-24 q- 72 q^2- 96
$$
is a non-holomorphic Eisenstein series of weight 2 for ${\G}$.
Actually the series $E_2(\tau)$ satisfies
\begin{equation}\label{E_2}
E_2(\frac{a\tau+ b}{c\tau+ d})= E_2(\tau) (c\tau+ d)^2+ \frac{
6c(c\tau+d)}{\I \pi}
\end{equation}
for every $\left(\begin{array}{cc} a&b\\
c&d\end{array}\right)$ in $\G$. This is due to the fact that the
series
$$
\sum_{m,n=1}^{\infty} \frac{1}{ (m\tau+ n)^k}.
$$
is not absolutely convergent for $k= 2$. To overcome that difficulty
one considers, following Hecke, the series, defined for $s>0$,
$$
E^{*}_{2k}(\tau;s)= 1- \frac{2k}{B_{2k}} \sum_{m, n=1}^{\infty}
\frac{1}{ (m\tau+ n)^k |(m\tau+ n|^s}.
$$
and takes the limit as $s\to 0$. For $k=1$ this gives \eqref{E_2^*}.
We will use a similar idea in subsection~\ref{sec:cylinder} to
recover the Green's function via an eigenvalue problem.

We recall the classical discriminant function $\Delta$, defined as
the infinite product
$$
\Delta(\tau)= q\,\prod_{n=1}^{\infty} (1-q^n)^{24}=q- 24q^2+ 252q^3-
1472 q^4+\cdots.
$$
It is a cusp form for ${\G}$ of weight 12 and is related to the
Eisenstein series by
$$
1728 \Delta(\tau)= E_4(\tau)^3- E_6(\tau)^2.
$$
Following
Ramanujan \cite{Ram} we also introduce the functions
\begin{eqnarray}\label{Ram}
\Phi_{rs}(q)=\sum_{m=1}^{\infty}\sum_{n=1}^{\infty}m^r n^s q^{mn}.
\end{eqnarray}
Then,
\begin{eqnarray*}\label{Ram1}
\Phi_{rs}(q)&=&\Phi_{sr}(q),\\
\Phi_{rs}(q)&=&\left(q\frac{d}{dq}\right)^r \Phi_{0,s-r}(q).
\end{eqnarray*}

We may consider $\Phi_{rs}(q)$ as a function of $\tau$ and write
$\Phi_{rs}(\tau)$. For this we have an algebraic relation
\begin{eqnarray*}
\Phi_{rs}(\tau)= \sum K_{lmn} E_2(\tau)^l E_4(\tau)^m E_6(\tau)^n,
\end{eqnarray*}
for suitable coefficients $K_{lmn}$, and where the sum is taken over
all positive integers $l,m,n$ satisfying $2l+4m+6n=r+s+1$, $l\leq
\inf(r,s)+1$.

It is a fundamental fact \cite{Ram} that every modular form on
${\G}$ is uniquely expressible as a polynomial in $E_4$ and $E_6$
and that the extension ${\BC}[E_2,  E_4, E_6]$ of ${\BC}[E_4, E_6]$
is closed under differentiation, with the following dynamical system
of Ramanujan:
\begin{eqnarray} \label{S1}
\left\{ \aligned
E_2'&=\frac{1}{12}(E_2^2-E_4),\\
E_4'&= \frac{1}{3}(E_2 E_4- E_6), \\
E_6'&= \frac{1}{2}(E_2 E_6- E_4^2),
\endaligned
\right.
\end{eqnarray}
where the prime means the differentiation $\displaystyle
\frac{1}{2\pi\I}\frac{d}{d\tau}$.

The identity \eqref{E_2} can be expressed by saying that, up to a
constant factor, $E_2$ defines an affine connection, and the first
relation in \eqref{S1} similarly says that $E_4$ is the projective
connection which is the curvature of the affine connection $E_2$
(see Section~\ref{sec:connections} for the terminology). Hence the
Eisenstein series $E_2$ defines a covariant derivative sending
weight $k$ modular forms $f$ into weight $k+2$ modular forms
$\displaystyle f'-\frac{k}{12}E_2f$.

From the system \eqref{S1} we obtain
\begin{eqnarray} \label{S2}
\aligned
\begin{cases}
E_4&= E_2^2- 12 E_2', \\
E_6&= E_2 ^3- 18 E_2 E'_2+ 36 E_2''.
\end{cases}
\endaligned
\end{eqnarray}
An important consequence is that the function $E_2$ is a solution of
the {Chazy} equation
\begin{equation}\label{eq:Chazy}
E_2'''= E_2 E_2''- \frac{3}{2} {E_2'}^2.
\end{equation}
This means that we may use $E_2, E_2', E_2''$ instead of $E_2, E_4,
E_6$ to express modular forms, that is $\displaystyle \BC [E_2, E_4,
E_6] = \BC [E_2, E_2', E_2'']$.  We shall, in a later subsection,
give an illustration of this fact to determine the modulus of doubly
connected domains.


\subsection{Critical points of the Green's function and zeros
of the Bergman kernel}

The critical points of the Green's function $G(z,a)$ are by
(\ref{eq:GVJ}), (\ref{eq:upsilonV}) exactly the zeros of
$\upsilon_{a-J(a)}$. For the annulus there is exactly one critical
point, $z=z_G(a)$, and by symmetry this is located on the same
diameter as $a$. For the more detailed investigations one may pass
to the universal covering, via $t=\log z$, $u=\log a$, $-\bar{u}
=J(u)=\log J(a)$. Then, by (\ref{eq:upsilonzeta}), (\ref{eq:A}),
\begin{equation}\label{eq:upsilonuJu}
\upsilon_{u-J(u)}(t)=(\zeta(t-u)-\zeta(t+\bar{u})+\frac{2\eta_1 \Re
u}{\log R})dt.
\end{equation}
Thus the equation for the representation $t=t_G(u)$ of the critical
point is
\begin{equation}\label{eq:critptuniv}
\zeta(t_G(u)-u)-\zeta(t_G(u)+\bar{u})=-\frac{2\eta_1 \Re u}{\log R},
\end{equation}
where $0<\Re u<\log R$. In the plane of the annulus the
corrseponding equation, for $z=z_G(a)$, becomes
\begin{equation}\label{eq:critpt}
\zeta(\log \frac{z_G(a)}{a})-\zeta(\log
z_G(a)\bar{a})=-\frac{2\eta_1 \log |a|}{\log R}.
\end{equation}
These equations have been analyzed by A.~Maria \cite{Maria}, and the
results are summarized in Theorem~\ref{thm:SF} below.

The zeros of the Bergman kernel can be treated fairly explicitly. By
(\ref{eq:K(t,u)}), the zeros of $K(t,u)$ on the universal covering
surface are those points $t=t_K(u)$ for which
\begin{equation}\label{eq:tKu}
\wp(t_K(u)+\bar{u})=-\frac{\eta_1}{\log R}.
\end{equation}
Thus
$$
t_K(u)=s-\bar{u},
$$
where $s$ solves
\begin{equation}\label{eq:wpequation}
\wp(s)=-\frac{\eta_1}{\log R}.
\end{equation}
This equation has two solutions in each period parallelogram. None
of them are real because $\wp$ is positive on the real axis, while
the right member is negative. In fact, it is easy to see that every
solution has imaginary part $\pi$, modulo multiples of $2\pi$.

Let $s=s(R)$ denote that solution of (\ref{eq:wpequation}) which
satisfies $0\leq\Re s<\log R$, $\Im s=\pi$. It depends on $R$ or,
equivalently, on $\tau$ via  (\ref{eq:tau}). Define also
\begin{equation}\label{eq:rho}
\rho=\rho(R)=e^{\Re s}=|e^{s}|.
\end{equation}
According to \cite{Eichler-Zagier}, $s$ can be explicitly computed
to be
$$
s(R)= \I\pi+\frac{3\log R}{2} +\frac{3456\sqrt3\pi\log R}{5}
\int_{\tau} ^{\I\infty} \frac{\Phi_{5,6}(t)-\Delta(t)}
{\Phi_{2,3}(t)^{\frac{3}{2}}}(t- \tau) dt,
$$
where the integration is vertically upwards starting at
$\tau=\frac{\I\pi}{\log R}$.

The following theorem summarizes results in \cite{Maria} and
\cite{SF} and combines these with our findings for the zeros of the
Bergman kernel.

\begin{thm}\label{thm:SF}
With $\rho$ defined by (\ref{eq:rho}) we have

\begin{itemize}

\item[(i)]  $1<\rho<\sqrt{R}$.

\item[(ii)] The Green's function $G(z,a)$ of $A_{1,R}$ has, for any given $a\in A_{1,R}$,
a unique critical point $z=z_G(a)$. This is located on the same
diameter as $a$ but on the opposite side of the hole. More
precisely,
$$
z_G(a)=-g(|a|)\frac{a}{|a|},
$$
where $g:(1,R)\to (1,R)$ is an increasing function which maps
$(1,R)$ onto the relatively compact subinterval $(\rho, R/\rho)$. It
satisfies $g(x)g(R/x)=R$ for $1<x<R$, in particular
$g(\sqrt{R})=\sqrt{R}$, and
$$
\lim_{a\to 1} g'(a)=\lim_{a\to R} g'(a)=0.
$$

\item[(iii)] When $\rho\leq |a|\leq R/\rho$ the Bergman
kernel $K(z,a)$ has no zeros.

\item[(iv)] When $1<|a|<\rho$ or $R/\rho<|a|<R$,
$K(z,a)$ has exactly one zero, $z=z_K(a)$, and this is explicitly
given by
$$
z_K(a)= \begin{cases}
 -\frac{\rho}{\bar{a}} \quad {\rm when\,\,}
1<|a|<\rho,\\
-\frac{R^2}{\rho\bar{a}} \quad {\rm when\,\,} R/\rho<|a|<R.
\end{cases}
$$

\end{itemize}

\end{thm}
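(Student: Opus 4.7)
The plan is to derive parts (iii) and (iv) directly from the explicit Bergman kernel formula \eqref{eq:K(z,a)}, and to obtain parts (i) and (ii) by combining results from \cite{Maria} and \cite{SF} with \corref{cor:critical}. The key observation is that on the universal cover, $K(z,a) = 0$ is equivalent to $\wp(\log(z\bar{a})) = -\eta_1/\log R$, so all four statements flow from a single equation on the torus.

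I would begin by locating the solutions of $\wp(s) = -\eta_1/\log R$ in the fundamental parallelogram. Since $\wp$ has degree two, there are exactly two such solutions. The right side is real negative, and $\wp>0$ on $\BR$ (double pole at $0$, minimum $e_1>0$ at $\omega_1$), so no real $s$ works. On the segment $s = x + \I\pi$, $x \in [0,\log R]$, however, $\wp(s)$ is real (by $\wp(\bar s)=\overline{\wp(s)}$ combined with $2\I\pi$-periodicity) and strictly monotone from $e_3 = \wp(\I\pi)$ to $e_2 = \wp(\log R + \I\pi)$, since $\wp'$ vanishes only at the half-periods. Granting $-\eta_1/\log R\in(e_3,e_2)$, a unique solution $s_0$ with $\Re s_0 \in (0,\log R)$ is obtained on this segment, and by evenness the second solution is $-s_0 \equiv (2\log R - \Re s_0) + \I\pi$ modulo the lattice. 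Setting $\rho = e^{\Re s_0}$, the congruence $\log(z\bar{a})\equiv \pm s_0$ translates back to the annulus as $\arg z - \arg a = \pi$ (so $z = -|z|\cdot a/|a|$) together with $|z||a| \in \{\rho, R^2/\rho\}$, yielding the two candidate zeros $z = -\rho/\bar{a}$ and $z = -R^2/(\rho\bar{a})$. Imposing $z\in A_{1,R}$ cuts out the admissibility ranges $1<|a|<\rho$ and $R/\rho<|a|<R$ respectively, and under the bound $\rho<\sqrt R$ of (i) these are disjoint with gap $[\rho, R/\rho]$; this proves (iii) and (iv).

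The two inequalities in (i) amount to $e_3 < -\eta_1/\log R$ (for $\rho > 1$) and $\wp(\tfrac12\log R + \I\pi) > -\eta_1/\log R$ (for $\rho < \sqrt R$). I would verify them by direct $q$-series computation, inserting the standard expansions of $\wp$, $\eta_1$, and the $e_i$ in $q = e^{-2\pi^2/\log R}$ and tracking the dominant terms for all $R>1$. These bounds are the main analytic content of \cite{Maria} and \cite{SF}, and their sharp verification is the main obstacle of the proof.

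For (ii), uniqueness of $z_G(a)$ is from ${\texttt{g}} = 1$. Rotational symmetry of $A_{1,R}$ puts $z_G(a)$ on the diameter through $a$; the involution $J$ forces it to the opposite radius, so $z_G(a) = -g(|a|)\,a/|a|$ for some $g:(1,R)\to(1,R)$. The inversion $a\mapsto R/a$ preserves \eqref{eq:critpt} after $z\mapsto R/z$, giving $g(x)g(R/x)=R$ and hence $g(\sqrt R)=\sqrt R$; monotonicity of $g$ follows from implicit differentiation of \eqref{eq:critpt} combined with sign analysis of the Weierstrass $\zeta$-function along the relevant shifted line. The image of $g$ being $(\rho, R/\rho)$ is pinned down by combining monotonicity with the boundary limits from \corref{cor:critical}: as $a\to\partial A_{1,R}$, the critical point $z_G(a)$ approaches the boundary limit of a zero of $K(z,\cdot)$, which by (iv) is $|z|=\rho$ for $|a|\to 1^+$ and $|z|=R/\rho$ for $|a|\to R^-$. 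The vanishing of $g'$ at the endpoints then follows from smoothness of $g$ up to the boundary combined with the functional equation $g(x)g(R/x)=R$.
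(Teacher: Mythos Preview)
Your approach is largely correct and closely tracks the paper's, but there is one structural difference worth noting and one small gap.

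\textbf{On part (i).} You propose to verify $1<\rho<\sqrt R$ directly via $q$-series estimates on $e_3$, $\wp(\tfrac12\log R+\I\pi)$, and $\eta_1/\log R$. This can be made to work, but it is unnecessary. The paper instead derives (i) \emph{from} (ii): once one knows (from \cite{Maria}) that $g$ is strictly increasing, and once the boundary limits $\lim_{a\to 1}g(a)=\rho$ and $\lim_{a\to R}g(a)=R/\rho$ have been identified, the inequality $\rho<R/\rho$ is immediate. Your own outline already contains both ingredients (monotonicity of $g$ via implicit differentiation, boundary limits via \corref{cor:critical} and (iv)), so you could dispense with the $q$-series computation entirely. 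The paper's route is shorter and avoids any delicate series analysis; your route is self-contained but more laborious.

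\textbf{On the boundary limits.} Your use of \corref{cor:critical} to pin down $\lim_{a\to 1}g(a)=\rho$ is a legitimate alternative to the paper's direct computation, which takes $u\to 0$ in \eqref{eq:zetafu} and uses $\zeta'=-\wp$ to arrive at $\wp(f(0)+\I\pi)=-\eta_1/\log R$. These are the same calculation packaged differently.

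\textbf{The gap.} Your claim that $\lim_{a\to 1}g'(a)=\lim_{a\to R}g'(a)=0$ follows from ``smoothness of $g$ up to the boundary combined with the functional equation $g(x)g(R/x)=R$'' is not justified: differentiating the functional equation only relates $g'(1^+)$ and $g'(R^-)$ to each other, it does not force either to vanish. The paper does not prove this either; it simply cites \cite{Maria}. You should do the same, or supply the actual argument (which requires differentiating \eqref{eq:zetafu} and analyzing the resulting expression as $u\to 0$).
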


This theorem is special to the annulus but is effective.
We do not know of any such precise results in higher connectivity.

Note that the theorem confirms in a precise way the assertion of
Corollary~\ref{cor:critical} that the limiting set for the critical
points of the Green's function is exactly the zeros of the Bergman
kernel when the parameter variable is on the boundary. In fact,
choosing $a$ on the positive real axis (for simplicity of notation)
the theorem shows that
$$
\lim_{a\to 1} z_G(a)=\lim_{a\to 1} z_K(a)=-\rho,
$$
$$
\lim_{a\to R} z_G(a)=\lim_{a\to R} z_K(a)=-\frac{R}{\rho}.
$$
In addition we deduce from the theorem the following remarkable dichotomy result.

\begin{cor}
The annulus $A_{1,R}$ is the disjoint union
of the set of critical points $\{z_G(a)\}$ of the Green's function,
the set of zeros $\{z_K(a)\}$ of the Bergman kernel and the two
circles $\{|z|=\rho\}$ and $\{|z|=\frac{R}{\rho}\}$.
\end{cor}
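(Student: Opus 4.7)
The plan is to read the decomposition directly off Theorem~\ref{thm:SF} by computing, separately, the image in $A_{1,R}$ of the two maps $a\mapsto z_G(a)$ and $a\mapsto z_K(a)$, and then verifying that these two images, together with the circles $|z|=\rho$ and $|z|=R/\rho$, partition $A_{1,R}$.

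First I would compute the critical-point locus. By part (ii) of \thmref{thm:SF},
$$
z_G(a)=-g(|a|)\,\frac{a}{|a|},
$$
so that $|z_G(a)|=g(|a|)$ and $\arg z_G(a)=\arg a+\pi$. Keeping $|a|$ fixed and letting $\arg a$ traverse $[0,2\pi)$, the image traces the entire circle of radius $g(|a|)$. Letting $|a|$ vary over $(1,R)$ and using that $g:(1,R)\to(\rho,R/\rho)$ is continuous and surjective (again by (ii)), we conclude
$$
\{z_G(a):a\in A_{1,R}\}=\{z\in\BC:\rho<|z|<R/\rho\}.
$$

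Next I would compute the zero-set of the Bergman kernel. By (iii), $K(z,a)$ has no zeros when $\rho\le|a|\le R/\rho$, so such $a$ contribute nothing. For $1<|a|<\rho$, (iv) gives $z_K(a)=-\rho/\bar{a}$, hence $|z_K(a)|=\rho/|a|\in(1,\rho)$, and $\arg z_K(a)=\arg a+\pi$ sweeps the full circle of radius $\rho/|a|$ as $\arg a$ varies. Similarly, for $R/\rho<|a|<R$, $z_K(a)=-R^2/(\rho\bar{a})$ and $|z_K(a)|=R^2/(\rho|a|)\in(R/\rho,R)$. Therefore
$$
\{z_K(a):a\in A_{1,R}\}=\{1<|z|<\rho\}\cup\{R/\rho<|z|<R\}.
$$

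Finally I would assemble these pieces. By (i), $1<\rho<\sqrt{R}<R/\rho<R$, so the five sets
$$
\{1<|z|<\rho\},\quad \{|z|=\rho\},\quad \{\rho<|z|<R/\rho\},\quad \{|z|=R/\rho\},\quad \{R/\rho<|z|<R\}
$$
are pairwise disjoint and cover $A_{1,R}$. Since the first and last together equal the zero-set of the Bergman kernel and the middle annular piece equals the set of critical points of $G$, the corollary follows. There is essentially no obstacle here beyond the bookkeeping of the ranges of $|z_G(a)|$ and $|z_K(a)|$; the decisive input is the surjectivity of $g$ onto $(\rho,R/\rho)$ in (ii), which is what matches the two Bergman-zero pieces to the critical-point piece precisely along the circles $|z|=\rho$ and $|z|=R/\rho$.
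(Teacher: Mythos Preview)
Your proof is correct and is exactly the argument the paper intends: the corollary is stated as an immediate consequence of \thmref{thm:SF}, and your write-up simply spells out the bookkeeping of the ranges of $|z_G(a)|$ and $|z_K(a)|$ that the paper leaves implicit. There is no separate proof in the paper beyond ``we deduce from the theorem,'' so your detailed verification of the partition is precisely what is required.
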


\begin{proof}(of theorem)

By rotational symmetry we may assume that $a$ is real and positive,
namely $1<a<R$. It has been shown in \cite{Maria} that
$$
z_G(a)=-g(a)
$$
for some differentiable function $g:(1,R)\to (1,R)$, which is
increasing, more precisely $g'>0$, and satisfies
$$
0<\lim_{a\to 1} g(a)<\lim_{a\to R} g(a)<R,
$$
(the latter follows also from \cite{Solynin} and from our
Theorem~\ref{thm:critical}) and
$$
\lim_{a\to 1} g'(a)=\lim_{a\to R} g'(a)=0.
$$
Slightly more explicitly we have $g(e^u)=e^{f(u)}$, where the
function $f:(0,\log R)\to (0,\log R)$ is defined as the unique
solution of
$$
\zeta(f(u)+\I\pi-u)-\zeta(f(u)+\I\pi+u)=-\frac{2\eta_1  u}{\log R},
$$
or
\begin{equation}\label{eq:zetafu}
\frac{1}{2u}\zeta(f(u)+\I\pi-u)-\zeta(f(u)+\I\pi+u)=-\frac{\eta_1
}{\log R}.
\end{equation}

Since $g'>0$, and hence $f'>0$, $f(0)=\lim_{u\to 0} f(u)$ exists,
and letting $u\to 0$ in (\ref{eq:zetafu}) gives (in view of
$\zeta'=-\wp$)
$$
\wp (f(0)+\I\pi)=-\frac{\eta_1 }{\log R}.
$$
It follows that
$$
f(0)+\I\pi =s(R)
$$
(see (\ref{eq:wpequation})--(\ref{eq:rho})). In other words,
$$
\lim_{a\to 1} g(a)=-e^{\Re s(R)}= -\rho(R).
$$
Similarly,
$$
\lim_{a\to 1} g(a)=-e^{\Re s(R)}= -\frac{R}{\rho(R)}.
$$

We conclude that $\rho<R/\rho$, i.e., that $1<\rho<\sqrt{R}$. By
this parts (i)and (ii) of the theorem are proven.

The assertions (iii) and (iv), about the Bergman zeros, are easy
consequences of the analysis made before the statement of the
theorem. In fact, since $\wp$ is an even function, the two solutions
of (\ref{eq:wpequation}) are $\pm s$, modulo the period lattice. It
follows from (\ref{eq:K(t,u)}) that, given $u\in F_0$, the
(possible) solutions $t=t_K(u)$ of $K(t,u)=0$ are represented by
\begin{equation}\label{eq:t(u)}
t_K(u)=
\begin{cases}
s-\bar{u}, \\
-s-\bar{u},
\end{cases}
\end{equation}
modulo the period lattice. Both $u$ and $t_K(u)$ shall correspond to
points in $A_{1,R}$, i.e., $0<\Re u<\log R$ and $0<\Re t_K(u)<\log
R$. This occurs if and only if $0<\Re u<s$ or $\log R-s<\Re u<\log
R$, and then we have the same inequalities for the zero $t_K(u)$
itself: $0<\Re t_K(u)<s$ and $\log R-s<\Re t_K(u)<\log R$,
respectively.

In the plane of the annulus (\ref{eq:t(u)}) becomes
$$
\displaystyle{ z_K(a)=
\begin{cases}
\frac{\exp s}{\bar{a}},\\
\frac{\exp (-s)}{\bar{a}},
\end{cases}}
$$
and since this $z_K(a)$ is in $A_{1,R}$ if and only if $1<|a|<\rho$
or $R/\rho<|a|<R$ we have now proved statements (iii) and (iv) in
the theorem.

\end{proof}

It follows from (i) and (ii) of Theorem~\ref{thm:SF} that the
critical points of the Green's function for any annulus of modulus
$R>1$ are located in the annulus of modulus $R/\rho^2$ symmetrically
centered in the original annulus. Since $\rho=\rho(R)>1$ for all
$R>1$ and $\rho$ depends smoothly on $R$ one easily concludes the
following.

\begin{cor}
Let $R>1$. There exist a sequence $\{\rho_n\}$,
$1=\rho_0<\rho_1<\rho_2<\dots <\sqrt{R}$ with $\lim_{n\to \infty}
\rho_n= \sqrt R$ such that, on setting $A_n= \{ z\in \BC: \rho_n <
|z|< R/\rho_n \}$, the critical points for the Green's function of
$A_{n}$ are all contained in $A_{n+1}$.
\end{cor}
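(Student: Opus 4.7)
The plan is to define $\rho_n$ recursively using the conformal invariance of the Green's function and iterating Theorem~\ref{thm:SF} on nested annuli. Concretely, set $\rho_0=1$, and once $\rho_n<\sqrt{R}$ has been defined, the dilation $z\mapsto z/\rho_n$ maps $A_n=\{\rho_n<|z|<R/\rho_n\}$ conformally onto the standard annulus $A_{1,R_n}$ with modulus $R_n=R/\rho_n^2>1$. By part (ii) of Theorem~\ref{thm:SF}, the critical points of the Green's function of $A_{1,R_n}$ lie in the relatively compact subannulus $\{\rho(R_n)<|z|<R_n/\rho(R_n)\}$. Pulling this back by $z\mapsto\rho_n z$, the conformal invariance of critical points of the Green's function then places every critical point of the Green's function of $A_n$ inside $\{\rho_n\rho(R_n)<|z|<R/(\rho_n\rho(R_n))\}$, so the natural choice is
\[
\rho_{n+1}=\rho_n\,\rho(R/\rho_n^2).
\]

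With this definition, part (i) of Theorem~\ref{thm:SF} gives $1<\rho(R_n)<\sqrt{R_n}=\sqrt{R}/\rho_n$, hence $\rho_n<\rho_{n+1}<\sqrt{R}$, so $\{\rho_n\}$ is strictly increasing and bounded above by $\sqrt{R}$. By construction the critical points of the Green's function of $A_n$ lie in $\{\rho_{n+1}<|z|<R/\rho_{n+1}\}=A_{n+1}$, as required. It therefore only remains to rule out a limit $L:=\lim_n\rho_n<\sqrt{R}$.

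The main technical step is this convergence claim, and it rests on the smooth (in particular continuous) dependence of $\rho(R)$ on $R>1$ for $R$ bounded away from $1$, which is built into the construction of $\rho$ via the equation $\wp(s)=-\eta_1/\log R$ and the implicit function theorem applied to that analytic equation. Assuming $L<\sqrt{R}$, the moduli $R_n=R/\rho_n^2$ would converge to $R/L^2>1$, and continuity would give $\rho(R_n)\to\rho(R/L^2)>1$. But $\rho_{n+1}/\rho_n=\rho(R_n)$, and $\rho_n\to L$ forces this ratio to tend to $1$, a contradiction. Hence $L=\sqrt{R}$ and $\rho_n\nearrow\sqrt{R}$.

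The only delicate point I foresee is verifying that $\rho(R)$ varies continuously with $R$ throughout $(1,\infty)$ and that $\rho(R)\to 1$ as $R\to 1$ (needed only to know the recursion is nontrivial near the beginning, but not for the argument above, which only uses continuity on compact subsets of $(1,\infty)$). Both follow from the fact, recorded just after \eqref{eq:wpequation}, that $s(R)$ is defined analytically by the equation $\wp(s;2\log R,2\pi\I)=-\eta_1(R)/\log R$ with $\Im s=\pi$, so that $\rho(R)=e^{\Re s(R)}$ depends smoothly on $R$. Everything else in the corollary is a direct consequence of the recursion and of Theorem~\ref{thm:SF}.
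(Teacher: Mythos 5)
Your proposal is correct and is essentially the argument the paper intends: the paper only sketches the proof (critical points of an annulus of modulus $R$ lie in the concentric subannulus of modulus $R/\rho(R)^2$, plus $\rho(R)>1$ and smooth dependence of $\rho$ on $R$), and your explicit recursion $\rho_{n+1}=\rho_n\,\rho(R/\rho_n^2)$ together with the continuity/contradiction argument for $\rho_n\to\sqrt{R}$ is exactly the fleshed-out version of that sketch. Your remark on where continuity of $\rho(R)$ comes from (the defining equation $\wp(s)=-\eta_1/\log R$, or equivalently the explicit Eichler--Zagier formula quoted in the paper) correctly covers the only point the paper leaves implicit.
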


One might ask what would be corresponding statement in higher connectivity.


\subsection{Spectral point of view}\label{sec:cylinder}

The Green's function can also be obtained via spectral problems for
the Laplace operator. This requires a choice of a metric, or at
least a volume form. For a metric $d\sigma=\rho|dz|$, the volume
form is $\rho^2 dx dy$ and the invariant Laplacian is
$\displaystyle\frac{1}{\rho^2}\Delta$, where $\displaystyle\Delta
=\frac{\partial^2}{\partial x^2}+\frac{\partial^2}{\partial y^2}$.
Thus a natural spectral problem in a domain $\Omega$ is
$$
\begin{cases}
-\Delta u = \lambda\rho^2 u \quad {\rm in\,\,}\Omega, \\
u=0 \quad {\rm on\,\,}\partial\Omega.
\end{cases}
$$

If $\lambda_1, \lambda_2, \dots$ are the eigenvalues and
$u_1,u_2,\dots$ the eigenfunctions, normalized by
$$
\int_\Omega |u_n|^2\rho^2 dxdy =1,
$$
then the Green's function of $\Omega$ is given formally by
$$
G(z,w)=2\pi \sum_{n=1}^\infty \frac{u_n (z)u_n (w)}{\lambda_n},
$$
where however the sum may converge only in a weak sense. See for
example \cite{Courant}, \cite{Duff-Naylor}. (The factor $2\pi$
appears because  we have no factor $1/2\pi$ in front of the
singularity of the Green's function.) One way to cope with the
convergence problem is to consider the corresponding Dirichlet
series
$$
G^s(z,w)=2\pi  \sum_{n=1}^\infty \frac{u_n (z)u_n (w)}{\lambda_n^s},
$$
which is an analytic function in $s$ for $\Re s>1$. One then studies
the behaviour as $s\to 1$. To continue analytically $G^s (z,w)$ we
shall use the second Kronecker limit formula which is basically a
connection between Epstein zeta functions and Eisenstein series
\cite{Siegel}.

The eigenfunctions $u_n$ form an orthonormal set both with respect
to the $L^2$-inner product $\int_{\Omega}uv\rho^2 dxdy$ and with
respect to the Dirichlet inner product $D(u,v)=\int_{\Omega}\nabla
u\cdot\nabla v dxdy$. Note that the $\lambda_n$ and $u_n$ depend on
$\rho$, but $G(z,w)$ does not.

We shall elaborate the above approach to the Green's function in the
annulus case, $\Omega=A_{1,R}$, and with metric coming from
interpreting $A_{1,R}$ as a cylinder. This metric is equivalent to
the natural flat metric on the universal covering surface of
$A_{1,R}$. Thus we consider the lift map $z\mapsto t=\log z$, by
which $A_{1,R}$ gets identified with
$$
C_R=\{t\in{\mathbb{C}}: 0<\Re t <\log R\}/2\pi\I {\mathbb{Z}}.
$$
This can be thought of as a cylinder, with boundary $\partial C_R$
represented by the vertical lines $\{\Re t=0\}$ and $\{\Re t=\log
R\}$ and provided with the euclidean metric $ d\sigma^2=|dt|^2$.

On $A_{1,R}$ itself the metric is given by $\rho(z)={1}/{|z|}$
($z\in A_{1,R}$). We shall keep some previous notations, for example
(\ref{eq:tau}), and sometimes use $\tau$ as a parameter in place of
$R$. When working in $C_R$ directly, the eigenvalue problem becomes
(since $\rho=1$ in $C_R$)
$$
\begin{cases}
-\Delta u = \lambda u \quad {\rm in\,\,} C_R, \\
u=0 \quad {\rm on\,\,}\partial C_R.
\end{cases}
$$
This problem can be solved by standard separation of variables
techniques. The eigenvalues then come with two indices, namely
$$
\lambda_{mn}=\frac{m^2\pi^2}{(\log R)^2}+n^2= n^2-\tau^2 m^2,
$$
for $m=1,2,\dots$, $n=0,1,2,\dots$. Using also negative values of
$n$ one can list the eigenfunctions as follows (now deviating from a
previous notational convention and using the variables $x$, $y$,
$z=x+\I y$ on the universal covering surface).
\begin{eqnarray*}
u_{mn}(z)= \left\{ \aligned \frac{1}{\sqrt{\pi\log R}}\sin
\frac{m\pi x}{\log R},
\quad& (m\geq 1,\, n=0),\\
\frac{2}{\sqrt{\pi\log R}}\sin \frac{m\pi x}{\log R}\cos ny, \quad&
(m\geq 1,\, n\geq 1), \\
\frac{2}{\sqrt{\pi\log R}}\sin \frac{m\pi x}{\log R}\sin ny, \quad&
(m\geq 1,\, n\leq -1).
\endaligned
\right.
\end{eqnarray*}

On using exponentials in place of trigonometric functions the above
gives, after simplifications and still working on the universal
covering surface.
$$
G^s(z,w)=2\pi  \sum_{m\geq 1, n\in {\mathbb{Z}}}^\infty \frac{u_{mn}
(z)u_{mn} (w)}{\lambda_{mn}^s}
$$
$$
=\frac{1}{2 \log R}\sum_{(m,n)\in{\mathbb{Z}}^2\setminus \{(0,0)\}}
\frac{(e^{m\tau(x-u)}-e^{m\tau(x+u)})e^{\I
n(y-v)}}{(n^2-m^2\tau^2)^s},
$$
$$
=\frac{1}{2\log R}\sum_{(m,n)\in{\mathbb{Z}}^2\setminus \{(0,0)\}}
\frac{(e^{m\tau(x-u)}-e^{m\tau(x+u)})e^{\I n(y-v)}}{|m\tau+n|^{2s}},
$$
where $z=x+\I y$, $w=u+\I v$. To go on further we shall need
Kronecker's second limit formula, contained in the following lemma.

\begin{lem}\label{lem:Kronecker2}
Let $\tau \in \BC$, $\Im \tau >0$. Let $u, v \in \BR \setminus \BZ$
be fixed and define  for $\Re s>1$ the zeta function
$$
\zeta_{\tau} (s; u,v)= \sum_{(m,n)\in{\mathbb{Z}}^2\setminus
\{(0,0)\}} \left( \frac{\Im\tau}{\vert m\tau + n\vert ^2}\right)^s
e^{ 2\pi\I(mu+ nv)}.
$$
Then the analytic continuation at $s= 1$ of $\zeta_{\tau} (s; u,v)$
is
$$
\zeta_{\tau} (1; u,v)= 2\pi ^2v^2 \Im\tau - 2\pi \log \vert
\frac{\vartheta_{1} (u-v\tau; \tau)}{\eta(\tau)} \vert,
$$
where the theta function $\vartheta_1$ is defined (cf.
(\ref{eq:theta})) by
$$
\displaystyle \vartheta_1(z)=\vartheta_{1}(z; \tau) =\vartheta
\begin{bmatrix}
1/2\\
1/2
\end{bmatrix}(z;\tau)
= \sum_{n\in \BZ} e^{\I \pi (n+\frac{1}{2})^2 \tau + 2\pi \I
(n+\frac{1}{2})(z+ \frac{1}{2})}
$$
$$
=-2\sum_{n=0}^\infty (-1)^n q^{\frac{1}{2}(n+\frac{1}{2})^2}\sin
(2n+1)\pi z
$$
$$
= -2 q^{\frac{1}{8}}\sin\pi z \prod_{n= 1}^\infty  (1- q^n) (1-2q^n
\cos 2\pi z+ q^{2n}),
$$
and where $\eta$ is the Dedekind eta function,
$$
\eta(\tau)= q ^{\frac{1}{24}} \prod_{n \geq 1} (1-
q^n)=\Delta(\tau)^{\frac{1}{24}}.
$$
\end{lem}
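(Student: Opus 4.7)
The plan is to analytically continue $\zeta_\tau(s;u,v)$ from $\Re s >1$ to $s=1$ by the classical Mellin transform combined with Poisson summation, and then to match the resulting closed form against the infinite product expansion of $\vartheta_1/\eta$. Write $\tau=\tau_1+\I\tau_2$ with $\tau_2=\Im\tau>0$ and split
$$
\zeta_\tau(s;u,v)=A(s)+B(s),\qquad A(s)=\tau_2^s\!\!\sum_{n\ne 0}\frac{e^{2\pi\I nv}}{|n|^{2s}},\qquad B(s)=\tau_2^s\!\!\sum_{m\ne 0}\sum_{n\in\BZ}\frac{e^{2\pi\I(mu+nv)}}{|m\tau+n|^{2s}}.
$$
Since $v\in\BR\setminus\BZ$, the $m=0$ piece $A(s)$ is entire in $s$, and at $s=1$ it is evaluated by the Fourier expansion of the Bernoulli polynomial $B_2$: one gets $A(1)=2\pi^2\tau_2\,B_2(\{v\})$ with $B_2(x)=x^2-x+\tfrac{1}{6}$.

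For the $m\ne 0$ piece, insert the standard gamma-integral $\Gamma(s)/|m\tau+n|^{2s}=\int_0^\infty t^{s-1}e^{-t|m\tau+n|^2}\,dt$ and apply Poisson summation to the Gaussian sum in $n$:
$$
\sum_{n\in\BZ}e^{2\pi\I nv}\,e^{-t(n+m\tau_1)^2}=\sqrt{\pi/t}\sum_{k\in\BZ}e^{-2\pi\I m\tau_1(v-k)}\,e^{-\pi^2(v-k)^2/t}.
$$
Interchanging the now-absolutely-convergent sum and integral for $\Re s$ large and using the Bessel-integral identity $\int_0^\infty t^{\nu-1}e^{-at-b/t}\,dt=2(b/a)^{\nu/2}K_\nu(2\sqrt{ab})$ yields a representation of $B(s)$ in terms of $K_{s-1/2}$. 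This representation extends meromorphically to all $s$ and, at $s=1$, one uses the elementary value $K_{1/2}(x)=\sqrt{\pi/(2x)}\,e^{-x}$. The resulting series in $m$ and $k$ is absolutely convergent because for $v\notin\BZ$ one has $|v-k|\ge\operatorname{dist}(v,\BZ)>0$, and the geometric sums in $m$ collapse to logarithms:
$$
B(1)=-2\pi\!\!\sum_{k\in\BZ}\log\bigl|1-e^{2\pi\I[u-\tau_1(v-k)]-2\pi\tau_2|v-k|}\bigr|.
$$

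Assuming $0<v<1$ (which costs no generality by $\BZ$-periodicity in $v$), split the $k$-sum into $k\ge 1$ (where $|v-k|=k-v$) and $k\le 0$ (where $|v-k|=v-k$). With $q=e^{2\pi\I\tau}$ and $z=u-v\tau$, these become respectively
$$
\prod_{k\ge 1}|1-q^k e^{2\pi\I z}|\quad\text{and}\quad\prod_{k\ge 0}|1-q^k e^{-2\pi\I z}|,
$$
which are exactly the non-trivial Jacobi-triple-product factors of $\vartheta_1(z;\tau)$. Using the product formulas for $\vartheta_1$ and for $\eta(\tau)=q^{1/24}\prod_{n\ge 1}(1-q^n)$ recalled in the lemma, one gets
$$
-2\pi\log\!\Bigl|\frac{\vartheta_1(z;\tau)}{\eta(\tau)}\Bigr|=-2\pi\log\bigl|2 q^{1/8-1/24}\sin\pi z\bigr|+B(1)
$$
after cancelling the $\prod(1-q^n)$ factors. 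The prefactor contributes $-2\pi\,\Re\bigl(\pi\I\tau/12\bigr)=\pi^2\tau_2/6$ from $q^{1/12}$, plus $-2\pi\log|2\sin\pi(u-v\tau)|=-2\pi\cdot\pi v\tau_2+O(1\text{ in }u)$ from the sine after extracting $|2\sin\pi z|=|e^{\I\pi z}-e^{-\I\pi z}|$ with $\Im z=-v\tau_2$; writing it out gives exactly $-\pi^2\tau_2(-2v)$ plus a logarithm of $|1-e^{2\pi\I z}|$ that is naturally absorbed into the $k=0$ term of $B(1)$ reassembled above.

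Combining everything, the linear-in-$v$ contributions from $A(1)=2\pi^2\tau_2(v^2-v+\tfrac16)$ cancel the $+2\pi^2\tau_2 v$ and $-\pi^2\tau_2/6$ produced by the $\sin\pi(u-v\tau)$ and $q^{1/12}$ corrections, leaving precisely $2\pi^2 v^2\,\Im\tau$ as the polynomial remainder, which is the formula to be proved. The main obstacle is precisely this step of bookkeeping: one must track the $q^{1/8},q^{1/24}$, and $\sin\pi(u-v\tau)$ contributions and verify that the Bernoulli-polynomial part $-v+\tfrac16$ in $A(1)$ is exactly absorbed by the normalization of $\vartheta_1/\eta$, so that only the pure $v^2$ term survives; every other manipulation above is a standard application of Poisson summation and Bessel integrals.
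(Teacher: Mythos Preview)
The paper does not prove this lemma; it is stated as Kronecker's second limit formula and attributed to Siegel's lecture notes, then used immediately in the computation of the Green's function for the annulus. So there is no ``paper's proof'' to compare against: you have supplied an argument where the authors gave only a citation.

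Your approach is the classical one (and is essentially what one finds in Siegel): split off the $m=0$ terms, evaluate those via the Fourier expansion of the second Bernoulli polynomial, treat the $m\neq 0$ block by Mellin transform plus Poisson summation in $n$, and then recognise the resulting double product at $s=1$ as the Jacobi triple product for $\vartheta_1/\eta$. This is correct and complete in outline.

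The final bookkeeping paragraph, however, contains arithmetical slips. You write that $q^{1/12}$ contributes $-2\pi\,\Re(\pi\I\tau/12)=\pi^2\tau_2/6$, but $q^{1/12}=e^{\pi\I\tau/6}$, so $\log|q^{1/12}|=-\pi\tau_2/6$ and the contribution to $-2\pi\log|\vartheta_1/\eta|$ is $\pi^2\tau_2/3$, not $\pi^2\tau_2/6$. Likewise the sine factor: with $z=u-v\tau$ and $0<v<1$ one has $|2\sin\pi z|=e^{\pi v\tau_2}\,|1-e^{-2\pi\I z}|$, so $-2\pi\log|2\sin\pi z|=-2\pi^2 v\tau_2-2\pi\log|1-e^{-2\pi\I z}|$; your sentence first gets the sign right and then flips it to ``$-\pi^2\tau_2(-2v)$''. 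If you redo this cleanly you will find
\[
-2\pi\log\Bigl|\frac{\vartheta_1(z)}{\eta(\tau)}\Bigr|=\frac{\pi^2\tau_2}{3}-2\pi^2 v\tau_2+B(1),
\]
and since $A(1)=2\pi^2\tau_2\bigl(v^2-v+\tfrac16\bigr)=2\pi^2\tau_2 v^2-2\pi^2\tau_2 v+\tfrac{\pi^2\tau_2}{3}$, the difference $A(1)+B(1)-\bigl(-2\pi\log|\vartheta_1/\eta|\bigr)$ is exactly $2\pi^2\tau_2 v^2$, as required. So your method is sound; only the last paragraph needs to be rewritten with the constants tracked correctly.
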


We remark that for $u,v \in \BZ$, the function $\zeta_{\tau} (1;
u,v)$ reduces to the Epstein zeta function
$$
\zeta_{\tau} (s)= \sum_{(m,n)\in{\mathbb{Z}}^2\setminus
\{(0,0)\}}\left( \frac{\Im \tau}{\vert m+ n\tau\vert ^2}\right)^s,
$$
which is no longer analytic at $s= 1$ but only meromorphic.
Kronecker's first limit formula (below) gives the meromorphic
continuation \cite{Siegel}.

\begin{lem}
The Epstein zeta function $\zeta_{\tau} (s)$ has a meromorphic
continuation to all $\BC$ with a simple pole at $s= 1$, given by
$$
\zeta_{\tau} (s)= \frac{\pi}{s-1}+ 2\pi \left(\gamma-
\ln(2\sqrt{\Im\tau} \vert \eta(\tau)\vert^2)\right)+\mathcal{O}(s-1)
$$
where $\gamma$ is the Euler constant. In particular $\zeta_{\tau}
(0)= -1$.
\end{lem}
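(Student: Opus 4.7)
The plan is to obtain a meromorphic continuation of $\zeta_{\tau}(s)$ near $s=1$ by Poisson summation in one of the two indices, and then extract the Laurent expansion term by term. Writing $\tau=x+\I y$ with $y=\Im\tau>0$, I first isolate the diagonal contribution $n=0$:
$$
\zeta_{\tau}(s) = 2y^s\zeta(2s) + y^s\sum_{n\ne 0}\sum_{m\in\BZ}\frac{1}{\bigl((m+nx)^2+n^2 y^2\bigr)^s}.
$$
The diagonal part is harmless near $s=1$, contributing $2y\zeta(2)=\pi^2 y/3$ at $s=1$. For the off-diagonal sum I apply Poisson summation in $m$, using $\sum_m g(m+nx)=\sum_k e^{2\pi\I knx}\hat g(k)$ together with the classical identity
$$
\int_{-\infty}^{\infty}\frac{e^{-2\pi\I ku}\,du}{(u^2+a^2)^s}=\frac{2\sqrt{\pi}}{\Gamma(s)}\left(\frac{\pi|k|}{a}\right)^{s-1/2}K_{s-1/2}(2\pi|k|a)\qquad(k\ne 0)
$$
(and the standard evaluation $\frac{\sqrt{\pi}\,\Gamma(s-1/2)}{\Gamma(s)}a^{1-2s}$ for $k=0$). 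This splits the off-diagonal sum into a $k=0$ contribution and a $k\ne 0$ Bessel remainder.

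Summing the $k=0$ contribution over $n\ne 0$ produces $\frac{2\sqrt{\pi}\,\Gamma(s-1/2)}{\Gamma(s)}\,y^{1-s}\,\zeta(2s-1)$. This single expression supplies the desired meromorphic continuation, and the only singularity it contributes in a neighbourhood of $s=1$ is the simple pole of $\zeta(2s-1)$ with residue $\tfrac12$, which produces precisely the pole $\pi/(s-1)$ of $\zeta_{\tau}(s)$. Laurent-expanding using $\psi(1/2)-\psi(1)=-2\ln 2$ and $\zeta(2s-1)=\tfrac{1}{2(s-1)}+\gamma+O(s-1)$, its finite part comes out to be $-\pi(\ln y+2\ln 2)+2\pi\gamma$.

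The $k\ne 0$ contribution is entire in $s$ thanks to the exponential decay $|K_{s-1/2}(z)|=O(z^{-1/2}e^{-z})$. Specialising to $s=1$ and using the closed form $K_{1/2}(z)=\sqrt{\pi/(2z)}\,e^{-z}$ collapses it to
$$
4\pi\,\Re\sum_{n\ge 1}\frac{1}{n}\sum_{k\ge 1}q^{kn}=4\pi\,\Re\sum_{n\ge 1}\frac{1}{n}\frac{q^n}{1-q^n},\qquad q=e^{2\pi\I\tau}.
$$
Reordering the double sum by $m=kn$ and using the symmetric divisor identity $\sum_{d\mid m}1/d=\sigma_1(m)/m$ identifies $\sum_{n,k\ge 1}q^{kn}/n$ with $-\log\prod_{n\ge 1}(1-q^n)$; the factorisation $\eta(\tau)=q^{1/24}\prod_{n\ge 1}(1-q^n)$ then evaluates the $k\ne 0$ piece as $-4\pi\log|\eta(\tau)|-\pi^2 y/3$.

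Adding the three contributions, the two occurrences of $\pi^2 y/3$ cancel and the remaining constant assembles as $2\pi\gamma-\pi\ln(4y)-4\pi\log|\eta(\tau)|=2\pi\bigl(\gamma-\ln(2\sqrt{y}\,|\eta(\tau)|^2)\bigr)$, as claimed. Finally, $\zeta_{\tau}(0)=-1$ drops out of the same decomposition at $s=0$: the diagonal gives $2\zeta(0)=-1$, while both the $k=0$ off-diagonal piece and the Bessel series carry an explicit factor $1/\Gamma(s)$ that kills them at $s=0$. The main obstacle I expect is the bookkeeping, namely aligning the three Laurent expansions (the gamma ratio, $y^{1-s}$, and $\zeta(2s-1)$) so that every logarithm of $y$ and of $2$ ends up packaged inside $\ln(2\sqrt{y}\,|\eta(\tau)|^2)$; the identification of the Bessel series with the eta function is then a purely formal manipulation, and the only analytic subtlety is the uniform absolute convergence of the $k\ne 0$ sum on a neighbourhood of $s=1$.
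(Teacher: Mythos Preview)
The paper does not actually prove this lemma: it is quoted as Kronecker's first limit formula with a reference to Siegel, and no argument is supplied. So there is no ``paper's proof'' to compare against; your proposal has to be assessed on its own.

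Your argument is the standard Fourier--expansion (Chowla--Selberg) proof and is correct in all essential points. The split into the $n=0$ diagonal, the Poisson-summed $k=0$ piece $\frac{2\sqrt{\pi}\,\Gamma(s-\tfrac12)}{\Gamma(s)}\,y^{1-s}\zeta(2s-1)$, and the Bessel remainder is exactly right; the Laurent expansion at $s=1$ is computed correctly (the cancellation of the two $\pi^{2}y/3$ terms and the packaging $2\pi\gamma-\pi\ln(4y)-4\pi\log|\eta|=2\pi(\gamma-\ln(2\sqrt{y}\,|\eta|^{2}))$ are both fine); and the evaluation $\zeta_{\tau}(0)=-1$ via the $1/\Gamma(s)$ factor is clean.

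One small point worth adding for completeness, since the lemma asserts meromorphic continuation to \emph{all} of $\BC$ with the \emph{only} pole at $s=1$: your decomposition produces an apparent pole at $s=\tfrac12$, coming both from $2y^{s}\zeta(2s)$ and from the $\Gamma(s-\tfrac12)$ factor in the $k=0$ piece. A one-line residue computation shows these cancel (residues $y^{1/2}$ and $-y^{1/2}$). Similarly, the apparent poles of $\Gamma(s-\tfrac12)$ at $s=\tfrac12-k$, $k\ge 1$, are killed by the trivial zeros $\zeta(-2k)=0$. With that remark your decomposition really does give a globally meromorphic expression with a single pole, and the proof is complete.
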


Continuing the computation of the Green's function we have, using
Lemma~\ref{lem:Kronecker2},
$$
G(z,w)=\lim_{s\to 1} G^s(z,w)
$$
$$
=\lim_{s\to 1}\frac{\Im \tau}{2\pi}
\sum_{(m,n)\in{\mathbb{Z}}^2\setminus \{(0,0)\}}
\left(\frac{e^{-(x-u)\Im\tau}e^{\I n(y-v)}}{|m\tau+n|^{2s}} -
\frac{e^{-(x+u)\Im\tau}e^{\I n(y-v)}}{|m\tau+n|^{2s}}\right)
$$
$$\displaystyle
=\lim_{s\to 1}\frac{(\Im \tau)^{1-s}}{2\pi} \left(\zeta_{\tau} (s;
-\frac{(x-u)\Im \tau}{2\pi},\frac{y-v}{2\pi})-  \zeta_{\tau} (s;
-\frac{(x+u)\Im \tau}{2\pi},\frac{y-v}{2\pi})\right)
$$
$$
\displaystyle =-\log|{\vartheta_1(-\frac{(x-u)\Im
\tau}{2\pi}-\frac{(y-v)\tau}{2\pi})}|
+\log|{\vartheta_1(\frac{-(x+u)\Im
\tau}{2\pi}-\frac{(y-v)\tau}{2\pi})}|
$$
$$
=-\log|{\vartheta_1(\frac{(z-w)\tau}{2\pi\I})}|
+\log|{\vartheta_1(\frac{(z+\bar{w})\tau}{2\pi\I})}|
$$
$$
=-\log|{\vartheta_1(\frac{z-w}{2\log R})}|
+\log|{\vartheta_1(\frac{z+\bar{w}}{2\log R})}|.
$$

This was all on the universal covering. To get back to $A_{1,R}$ one
simply substitutes $\log z$, $\log w$ for $z$, $w$:
$$
G(z,w)=-\log|{\vartheta_1(\frac{\log z-\log w}{2\log R})}|
+\log|{\vartheta_1(\frac{\log z+\log \bar{w}}{2\log R})}|.
$$
A similar formula is given in \cite{Courant}. Compare also
(\ref{eq:green-Crowdy}).


\subsection{A remark on the modulus of a doubly connected domain}

The connection between the preceeding topics and the Ramanujan partial
differential system (\ref{S1}) or the Chazy equation (\ref{eq:Chazy})
can be explored in other directions. To give an example, we
quote a result in \cite{Epstein} showing that the Bergman minimum
integrals \cite{Bergman} completely determines the modulus of a doubly connected
domain.

In any domain $\Omega$, consider the problem of minimizing
$\|f\|^2=\int_\Omega|f|^2dxdy$ among functions satisfying
$$
\begin{cases}
f(a)=\dots=f^{(n-1)}(a)=0,\\
f^{(n)}(a)=1,
\end{cases}
$$
where $a\in\Omega$ is a fixed point and $n\geq 1$. Let $f_n=f_{n,a}$
be the unique minimizer and let
$\lambda_n=\lambda_n(a)=\|f_{n,a}\|^2$ be the minimum value. Then it
is easy to see that $\{f_n/\sqrt{\lambda_n}: n=0, 1,\dots\}$ forms
an orthonormal basis in the Bergman space, and hence that
\begin{equation}\label{eq:Kexp}
K(z,w)=\sum_{n=0}^\infty \frac{f_n(z)\overline{f_n(w)}}{\lambda_n}.
\end{equation}
Here the dependence on the point $a\in\Omega$ has disappeared, even
though each individual $f_n$ and $\lambda_n$ depend on $a$.

Let $d\sigma=\sqrt{\pi K(z,z)}|dz|$ be the Bergman metric of
$\Omega$. It has Gaussian curvature $\kappa=\kappa_{\rm Gauss}<0$,
given in terms of the Bergman kernel and the Bergman minimum
integrals by
$$
\kappa(z)= -\frac{2}{\pi K(z,\bar{z})} \frac{\partial^2{\log
K(z,\bar{z})}}{\partial z\partial\bar{z}}
=-2\frac{\lambda_0^2(z)}{\lambda_1(z)}.
$$
The last expression follows from (\ref{eq:Kexp}) (see \cite{Epstein}
for details). Set also
$$
\gamma(z) =\frac{1}{\pi K(z,\bar{z})}\frac{\partial^2{\log (-\kappa
(z))}}{\partial z\partial\bar{z}}
=\frac{\lambda_0(z)\lambda_1(z)}{\lambda_2(z)}-3\frac{\lambda_0^2(z)}{\lambda_1(z)}.
$$
Both $\kappa$ and $\gamma$ are conformally invariant scalar
functions.

Now we specialize to doubly connected domains. Taking in this subsection
$\Omega=A_{r,1}$ ($0<r<1$) as a model, the conformal invariance implies that
$\kappa(z)$ and $\gamma(z)$ depend only on $|z|$ and that they are
constant on $\partial A_{r,1}$ (the same constant on both boundary
components). These constants, $\kappa_r=\kappa|_{\partial A_{r,1}}$
and $\gamma_R=\gamma|_{\partial A_{r,1}}$, depend on the modulus
$R$. It is shown in \cite{Epstein} that
\begin{eqnarray}\label{Modulus1}
\frac{ \frac{2}{3}{\gamma_r}+ 2{\kappa_r}+ 8} {\left(-{\kappa_r}- 4
\right)^{\frac{3}{2}}} = \frac{6 g_3- 14\alpha g_2+
120\alpha^3}{\left(g_2 - 12\alpha ^2\right)^{\frac{3}{2}}},
\end{eqnarray}
where
$$
\tau= \frac{\log r}{\I\pi}, \quad \alpha= -\frac{1}{12} E_2(\tau),
$$
$$
g_2= \frac{1}{12} E_4(\tau), \quad g_3= -\frac{1}{216}E_6 (\tau).
$$
The quantity on the right-hand side of \eqref{Modulus1} depends on
$r$ or, equivalently, on $\tau$. Denote it by $f(\tau)$:
$$
f(\tau)=\frac{6 g_3- 14\alpha g_2+ 120\alpha^3}{\left(g_2 - 12\alpha
^2\right)^{\frac{3}{2}}}.
$$
It is shown in \cite{Epstein} that $f(\tau)$ is monotone as a function
of $r$. Thus $f(\tau)$ determines the modulus. By using
\eqref{S2} and Chazy equation, we also find the explicit expressions
\begin{eqnarray}\label{Modulus2}
\aligned
f(\tau)= &\frac{3 {E_2}^{''}+ 2E_2 {E_2}'}{{E_2'}^{\frac{3}{2}}},\\
f'(\tau)= &\frac{-9 {E_2''}^{2}+ 4 E_2 {E_2}'{E_2}'' -5{E_2'}^{3}
}{{2 E_2'}^{\frac{5}{2}}},
\endaligned
\end{eqnarray}
where the prime denotes differentiation
$\displaystyle\frac{1}{2\pi\I}\frac{d}{d\tau}$, as before.

Thus the Eisenstein series
$E_2$, which is an affine connection, is useful to study the modulus of a doubly connected domain.
In the next section we will study affine, and projective, connections in more generality.


\section{Projective and affine connections}\label{sec:connections}

\subsection{Definitions}

We shall review some aspects of the theory of affine and projective
connections. General references here are
\cite{Schiffer-Hawley}, \cite{Gunning},
\cite{Gustafsson-Peetre}.

Let $z=f(t)$, where $z$, $t$ are complex variables and $f$
holomorphic. We introduce the nonlinear differential operators
$$
\{z,t\}_0= \log \frac{dz}{dt} = -2 \log \frac{1}{\sqrt{f'}},
$$
$$
\{z,t\}_1= \frac{d}{dt}\log \frac{dz}{dt} = -2 \sqrt{f'}\,\,
(\frac{1}{\sqrt{f'}})',
$$
$$
\{z,t\}_2= \frac{d^2}{dt^2}\log \frac{dz}{dt}
-\frac{1}{2}(\frac{d}{dt}\log \frac{dz}{dt})^2
=\frac{f'''}{f'}-\frac{3}{2}(\frac{f''}{f'})^2 = -2 \sqrt{f'}
\,\,(\frac{1}{\sqrt{f'}})''.
$$
The latter is the \emph{schwarzian derivative} of $f$. For
$\{z,t\}_0$ there is an additive indeterminancy of $2\pi \I$, so
actually only its real part is completely well-defined. With $s$ an
intermediate complex variable, so that $z=g(s)$, $s=h(t)$ for some
holomorphic functions $g$ and $h$, we have
\begin{equation}\label{composition}
\{z,t\}_k (dt)^k=\{z,s\}_k (ds)^k+\{s,t\}_k (dt)^k \quad (k=0,1,2),
\end{equation}
see \cite{Schiffer-Hawley}. In addition, one notes that
$$
\{z,t\}_0 =0 \quad {\rm iff}\quad z=t+b \quad (f {\rm \,\, is\,\,
a\,\, translation}),
$$
$$
\{z,t\}_1 =0 \quad {\rm iff}\quad z=at+b \quad (f {\rm \,\, is\,\,
an\,\,affine\,\, map}),
$$
\begin{equation}\label{eq:Mobius}
\{z,t\}_2 =0 \quad {\rm iff}\quad z=\frac{at+b}{ct+d} \quad (f {\rm
\,\, is\,\, a\,\, M\ddot{o}bius \,\, transformation}).
\end{equation}

The above differential operators can also be obtained as limits of
derivatives of logarithms of difference quotients of the holomorphic
function $f$. To be precise, let $z$, $w$ and $t$, $u$ be related by
$z=f(t)$, $w=f(u)$. Then
\begin{equation}\label{eq:logdiff}
\displaystyle
\begin{cases}
\{z,t\}_0=\lim_{u\to t} \log\frac{w-z}{u-t},\\
\{z,t\}_1=2\lim_{u\to
t} \frac{\partial}{\partial t}\log\frac{w-z}{u-t},\\
\{z,t\}_2=6\lim_{u\to t} \frac{\partial^2}{\partial t\partial
u}\log\frac{w-z}{u-t}.
\end{cases}
\end{equation}
Alternatively, for the last case, one may introduce a polarized
version of the Schwarzian derivative by
$$
[z,w;t,u]=6\frac{\partial^2}{\partial t\partial
u}\log\frac{w-z}{u-t}.
$$
Then
$$
\{z,t\}_2 = [z,z;t,t].
$$

Now, let $M$ be a Riemann surface. An {\it affine structure} on $M$
is a choice of holomorphic atlas such that all transition functions
between coordinates within this atlas are affine maps, i.e., such
that for any two coordinates $z$ and $\tilde{z}$ for which the
domains of definition overlap, the relation $\{\tilde{z},z\}_1=0$
holds. This is a quite demanding requirement, and not every Riemann
surface admits an affine structure. Of the compact Riemann surfaces
only those of genus one do.

Less demanding is a {\it projective structure}. It is a choice of
holomorphic atlas on $M$ such that all coordinate changes are
M\"obius transformations, i.e., such that $\{\tilde{z},z\}_2=0$
whenever $z$ and $\tilde{z}$ are coordinates with overlapping
domains of definition. By the uniformization theorem every Riemann
surface admits a projective structure.

Let $\phi(z)(dz)^m=\tilde{\phi}(\tilde{z})(d\tilde{z})^m$ be local
expressions for a differential of order $m$ on $M$. Thus the
coefficients transform under holomorphic change of coordinates
according to
$\phi(z)=\tilde{\phi}(\tilde{z})(\frac{d\tilde{z}}{dz})^m$. We shall
allow $m$ not only to be an integer, but also a half-integer. This
requires a consistent choice of signs in the multipliers
$(\frac{d\tilde{z}}{dz})^m$, i.e., requires a choice of a square
root of the canonical bundle (which has transition functions
$\frac{d\tilde{z}}{dz}$). Such a square root always exists, in fact
there are in general several inequivalent choices. We refer to
\cite{Gunning}, \cite{Hawley-Schiffer}, \cite{Hejhal} for details
about this. Recall however (Section~2) that in case $M$ is the
Schottky double of a plane domain then there is a canonical choice
of square root of the canonical bundle, namely obtained by choosing
the square root of the Schwarz function to be $1/T(z)$, where $T(z)$
the tangent vector of the oriented boundary. This is the choice of
square root which will be used in the sequel.

An {\it affine connection} on $M$ is an object which is represented
by local differentials $r(z)dz$,
$\tilde{r}(\tilde{z})d\tilde{z}$,\dots (one in each coordinate
variable) glued together according to the rule
\begin{equation}\label{eq:affconnection}
\tilde{r}({\tilde{z}}){d\tilde{z}}=r(z){dz}-\{\tilde{z},z\}_1\,{dz}.
\end{equation}
In the presence of an affine connection it is possible to define,
for every $k\in\frac{1}{2}{\mathbb{Z}}$, a {\it covariant derivative}
$\nabla_k$ from $k$:th order differentials to $(k+1)$:th order
differentials by $\phi(dz)^k\mapsto (\nabla_k \phi)(dz)^{k+1}$,
where
\begin{equation}\label{eq:affderivative}
\nabla_k \phi = \frac{\partial\phi}{\partial z}-kr\phi.
\end{equation}
The covariance means that if $\phi(dz)^k=\tilde{\phi}(d\tilde{z})^k$
then $\nabla_k
\phi(dz)^{k+1}=\tilde{\nabla}_k\tilde{\phi}(d\tilde{z})^{k+1}$.

Similarly, a {\it projective connection} on $M$ consists of local
quadratic differentials $q(z)(dz)^2$,
$\tilde{q}(\tilde{z})(d\tilde{z})^2$, \dots, glued together
according to
\begin{equation}\label{eq:projconnection}
\tilde{q}({\tilde{z}})({d\tilde{z}})^2=q(z)({dz})^2-\{\tilde{z},z\}_2\,({dz})^2.
\end{equation}
From (\ref{composition}) it follows that this law (as well as
(\ref{eq:affconnection})) is associative. In general, we do not
require $r$ and $q$ to be holomorphic, although our main interest is
in the holomorphic (or meromorphic) case.

In addition to the above one may consider also $0$-connections,
quantities defined up to multiples of $2\pi \I$ and which transform
according to
\begin{equation}\label{eq:zeroconnection}
\tilde p (\tilde z)= p(z) -\{\tilde z, z\}_0.
\end{equation}
This means exactly that $e^{p(z)}$ is well-defined and transforms as
differential of order one.

A projective connection is less powerful than an affine one, but it
still allows for certain covariant derivatives: for each $m=0, 1, 2,
\dots$ there is, in the presence of a projective connection $q$, a
well-defined linear differential operator $\Lambda_m$ taking
differentials of order $\frac{1-m}{2}$ to differentials of order
$\frac{1+m}{2}$: $\phi (dz)^{\frac{1-m}{2}}\mapsto (\Lambda_m\phi)
(dz)^{\frac{1+m}{2}}$. The first few look, in a local
coordinate $z$,
$$
\Lambda_0(\phi)=\phi ,
$$
$$
\Lambda_1(\phi)=\frac{\partial\phi}{\partial z},
$$
$$
\Lambda_2(\phi)=\frac{\partial^2\phi}{\partial
z^2}+\frac{1}{2}q\phi,
$$
$$
\Lambda_3(\phi)=\frac{\partial^3\phi}{\partial
z^3}+2q\frac{\partial\phi}{\partial z}+\frac{\partial q}{\partial
z}\phi,
$$
$$
\Lambda_4(\phi)=\frac{\partial^4\phi}{\partial
z^4}+10q\frac{\partial^2\phi}{\partial z^2}
+10\frac{\partial q}{\partial z}\frac{\partial \phi}{\partial z}
+(9q^2+3\frac{\partial^2 q}{\partial z^2})\phi.
$$
The covariant derivative $\Lambda_m$ will be called the $m$:th order
{\it Bol operator}, for reasons to be explained further on.

Any affine connection $r$ gives rise to a projective connection $q$
by
\begin{equation}\label{qr}
q= \frac{\partial r}{\partial z}- \frac{1}{2}r^2.
\end{equation}
This $q$ is sometimes called the ``curvature" of $r$  (cf.
\cite{Dubrovin}). In fact, its definition is analogous to that of
the curvature form in ordinary differential geometry, see
\cite{Frankel}. Slightly more generally than (\ref{qr}), any two
affine connections $r_j(z)dz$, $j=1,2$, combine into a projective
connection by
$$
q= \frac{1}{2}(\frac{\partial r_1}{\partial z} +\frac{\partial
r_2}{\partial z}- r_1 r_2).
$$

Not every projective connection is the curvature of an affine
connection. In the case that a projective connection comes from an
affine connection, as in (\ref{qr}), the corresponding covariant
derivatives are related by
\begin{equation}\label{eq:Lnablanabla}
\Lambda_1=\nabla_0, \quad \Lambda_2
=\nabla_{\frac{1}{2}}\nabla_{-\frac{1}{2}}, \quad
\Lambda_3=\nabla_{1}\nabla_{0}\nabla_{-1},\quad {\rm etc}.
\end{equation}
See \cite{Gustafsson-Peetre} for the proof.

The difference between two projective connections is a quadratic
differential, and the difference between two affine connections is
an ordinary differential. Hence, if $q(z)(dz)^2$ is one projective
connection the most general one is $q(z)(dz)^2$ plus a quadratic
differential. Similarly for affine connections.

Now, a central fact is that there is a one-to-one correspondence
between holomorphic projective connections and projective
structures: given a projective connection, represented by a
holomorphic function $q(z)$ in a general coordinate $z$, a
projective coordinate $t$ is obtained by solving the differential
equation
\begin{equation}\label{eq:SchwarzianDE}
\{t,z\}_2 =q(z).
\end{equation}
It follows from (\ref{composition}), (\ref{eq:Mobius}) that the set
of coordinates obtained in this way are related by M\"obius
transformations. In the other direction, given a projective
structure, a projective connection is obtained by simply setting
$q(t)=0$ in any projective coordinate $t$.

One way to solve (\ref{eq:SchwarzianDE}), when $q(z)$ is
holomorphic, is to consider the second order linear differential
equation
\begin{equation}\label{eq:secondorderDE}
\frac{\partial^2 u}{\partial z^2}+ \frac{1}{2}q(z)u =0,
\end{equation}
i.e., $\Lambda_2(u)=0$, for $u$ considered as a differential of
order minus one-half. The solutions to (\ref{eq:SchwarzianDE}) are
exactly the functions
$$
t(z)= \frac{u_2(z)}{u_1(z)},
$$
where $u_1$, $u_2$ are linearly independent solutions of
(\ref{eq:secondorderDE}).

In terms of a projective structure, the meaning of the covariant
derivatives $\Lambda_m$, for holomorphic $q$, is that in any
projective coordinate $t$ corresponding to the connection,
$\Lambda_m$ simply is the $k$:th order derivative with respect to
$t$: $\Lambda_m(\phi(t)(dt)^{\frac{1-m}{2}})=\frac{\partial^m
\phi(t)}{\partial t^m}(dt)^{\frac{1+m}{2}}$. The fact that the right
member here is covariant under M\"obius transformation is sometimes
called ``Bol's lemma" \cite{Bol}, \cite{Gustafsson-Peetre}. The
precise statement of this lemma is as follows.

\begin{lem}\label{lem:Bol}\cite{Bol}
Let $z=f(t)=\frac{at+b}{ct+d}$, $ad-bc =1$,
$\lambda(t)=(ct+d)^{-1}$, so that $f'(t)=\lambda(t)^2$. Then, for
any smooth function $F(z)$ and any positive integer $m$,
$$
\frac{\partial^m}{\partial t^m}(F(f(t))\lambda(t)^{1-m})
=\frac{\partial^m F}{\partial z^m}(f(t))\lambda(t)^{1+m}.
$$
\end{lem}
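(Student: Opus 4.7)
The plan is to fix an arbitrary point $t_0$, set $z_0=f(t_0)$, and exploit the identity
\begin{equation*}
z-z_0 = \frac{at+b}{ct+d}-\frac{at_0+b}{ct_0+d} = \frac{(ad-bc)(t-t_0)}{(ct+d)(ct_0+d)} = (t-t_0)\,\lambda(t)\,\lambda(t_0),
\end{equation*}
which is exactly the point at which the normalization $ad-bc=1$ enters. Substituting this factorization into the Taylor expansion of $F$ around $z_0$ converts the left-hand side of the Bol identity into
\begin{equation*}
F(f(t))\,\lambda(t)^{1-m} = \sum_{n\geq 0}\frac{F^{(n)}(z_0)\,\lambda(t_0)^n}{n!}\,(t-t_0)^n\,\lambda(t)^{n+1-m}.
\end{equation*}

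Next I would apply $\frac{\partial^m}{\partial t^m}$ to each summand and evaluate at $t=t_0$ by the Leibniz rule. The factor $(t-t_0)^n$ and its derivatives vanish at $t_0$ unless it is differentiated exactly $n$ times, so the Leibniz expansion of the $n$-th term collapses to the single index $k=m-n$, which forces $0\leq n\leq m$. For the remaining range $0\leq n<m$, the other factor $\lambda(t)^{n+1-m}=(ct+d)^{m-n-1}$ is a polynomial in $t$ of degree $m-n-1$, so its $(m-n)$-th derivative vanishes identically and these terms contribute nothing. Hence only the index $n=m$ survives, and it yields precisely
\begin{equation*}
F^{(m)}(z_0)\,\lambda(t_0)^m\cdot\lambda(t_0) = F^{(m)}(f(t_0))\,\lambda(t_0)^{m+1},
\end{equation*}
which is the right-hand side of the lemma. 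Since $t_0$ is arbitrary, the identity holds pointwise.

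The main strategic choice is to avoid a direct induction on $m$, which becomes tangled because differentiating $\lambda^{1-m}$ shifts the weight and so the inductive hypothesis cannot be invoked verbatim. The multiplicative factorization $z-z_0=(t-t_0)\lambda(t)\lambda(t_0)$ decouples the two variables and reduces the entire lemma to a transparent vanishing count. For merely $C^m$ rather than analytic $F$, I would replace the Taylor series by the degree-$m$ Taylor polynomial plus a remainder vanishing to order $m$ at $z_0$; by the chain rule this remainder contributes nothing to $\partial_t^m$ at $t_0$, so the same conclusion goes through. I do not see a genuine obstacle — the real content of the lemma is the algebraic fact that in a projective coordinate $t$ the Bol operator $\Lambda_m$ reduces to $\partial_t^m$.
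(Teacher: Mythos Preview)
Your proof is correct. The factorization $z-z_0=(t-t_0)\lambda(t)\lambda(t_0)$ is exactly the right device, and your Leibniz count is accurate: for $n<m$ the factor $(ct+d)^{m-n-1}$ is a polynomial of degree $m-n-1$ whose $(m-n)$-th derivative vanishes, while $n>m$ dies because $(t-t_0)^n$ is not differentiated enough times; only $n=m$ survives and gives the stated result. Your remark on the $C^m$ case is also fine, since both sides of the identity at $t_0$ depend only on the $m$-jet of $F$ at $z_0$.

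As for comparison with the paper: there is nothing to compare. The paper does not prove Lemma~\ref{lem:Bol}; it merely states it with a citation to Bol's original article and the remark that the case $m=1$ is the chain rule. Your argument is therefore a genuine addition rather than a reworking of something already present. It is also somewhat cleaner than the usual induction on $m$ (which, as you note, is awkward because the weight shifts), and it makes transparent why the M\"obius hypothesis is exactly what is needed: it is precisely the condition that $z-z_0$ factors as a product of a function of $t$ alone, a function of $t_0$ alone, and $(t-t_0)$.
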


For $m=1$ this is the ordinary chain rule, holding for any change of
coordinate $f$, whereas for $m\geq 2$ the formula holds only for
M\"obius transformations.

In any projective coordinate $t$, a natural fundamental set (basis)
of solutions of the equation $\Lambda_m u=0$ is
$\{1,t,\dots,t^{m-1}\}$. Considering these functions as
$\frac{1-m}{2}$-forms (namely $\displaystyle (dt)^{\frac{1-m}{2}}$,
$\displaystyle t(dt)^{\frac{1-m}{2}}$ etc.) and turning to a general
coordinate $z$, this basis transforms into $\{u_1^{m-1},
u_1^{m-2}u_2,\dots,u_2^{m-1}\}$, where $u_1=\sqrt{\frac{dz}{dt}}$,
$u_2=t\sqrt{\frac{dz}{dt}}$ are $-\frac{1}{2}$-forms. The
transformation property of $\Lambda_m$ then shows that $\{u_1^{m-1},
u_1^{m-2}u_2,\dots,u_2^{m-1}\}$ is a fundamental solution set for
$\Lambda_m u=0$ when considered as a differential equation in the
$z$ variable. It follows that the operator $\Lambda_m$ agrees with
the $(m-1)$-fold symmetric product of $\Lambda_2$: $\Lambda_m=
S^{m-1}(\Lambda_2)$.

In summary, the Bol operators $\Lambda_m$, $m\geq 2$, are all generated by $\Lambda_2$,
and their solutions are generated by two solutions of $\Lambda_2 u=0$. In the last
section of the paper we shall discuss a method, via a prepotential, of finding
all solutions of $\Lambda_2 u=0$ from a single one.

Turning to affine connections there are analogous statements as for
projective connections: there is a one-to-one correspondence between
affine structures and holomorphic affine connections, the
correspondence between an affine (or ``flat") coordinate $\tau$ and
an affine connection $r(z)$ given in a general coordinate $z$ being
\begin{equation}\label{eq:firstorderDE}
\{\tau, z\}_1 =r(z).
\end{equation}
This equation can be directly integrated as
$$
\tau (z) =\int^z \exp(\int^w r(\zeta)d\zeta) dw.
$$
One may also think of (\ref{eq:firstorderDE}) as
$$
\nabla_1\nabla_0\, \tau =0,
$$
or $\nabla_1 (d\tau) =0$, cf. \cite{Dubrovin}. With $\tau$ used as
coordinate, $\nabla_m(\phi)=\frac{\partial \phi}{\partial \tau}$ for
any $m$-differential $\phi(\tau)(d\tau)^m$.

Along with $\Lambda_m$ and $\nabla_m$ it is natural to introduce
their conjugates $\bar{L}_m$ and $\overline{\nabla}_m$, defined by
replacing $\frac{\partial}{\partial z}$ by $\frac{\partial}{\partial
\bar{z}}$ and $q$, $r$ by $\bar{q}$, $\bar{r}$.

Even more powerful than an affine connection is a hermitean metric,
which we in the present section prefer to write in either of the
following two ways:
\begin{equation}\label{eq:hermitean}
d\sigma= \frac{|dz|}{\omega(z)}=e^{p(z)}|dz|.
\end{equation}
Here $\omega>0$ transforms as the coefficient of a form of bidegree
$(-\frac{1}{2}, -\frac{1}{2})$ (i.e., $\omega
(dz)^{-1/2}(d\bar{z})^{-1/2}$ is invariant), whereas $p(z)=-\log
\omega(z)$, in analogy with (\ref{eq:affconnection}),
(\ref{eq:projconnection}), transforms as the real part of a
$0$-connection:
\begin{equation}\label{eq:affmetric}
\tilde{p}(\tilde{z})=p(z)- \Re \{\tilde{z},z\}_0.
\end{equation}

Any hermitean metric gives rise to a, not necessarily holomorphic,
affine connection by
\begin{equation}\label{eq:rfrommetric}
r= -2 \frac{\partial }{\partial z}\log \omega =2 \frac{\partial
p}{\partial z}.
\end{equation}
This relationship says that the covariant derivative of the metric
vanishes: $ \nabla_{\frac{1}{2}} ({1}/{\omega})
=\overline{\nabla}_{\frac{1}{2}} ({1}/{\omega})=0$, or, which turns
out to be the same,
\begin{equation}\label{eq:nablaomega}
\nabla_{-\frac{1}{2}} \omega =\overline{\nabla}_{-\frac{1}{2}}
\omega=0.
\end{equation}
Clearly (\ref{eq:Lnablanabla}), (\ref{eq:nablaomega}) imply that
\begin{equation}\label{eq:Lomega} \Lambda_2 \omega =0,
\end{equation}
and also that $\bar{\Lambda}_2 \omega =0$. Moreover, it is easy to
check that $\Lambda_m \omega^{m-1}=\bar{\Lambda}_m \omega^{m-1}=0$
for any $m\geq 2$.

The Gaussian curvature of the metric (\ref{eq:hermitean}) is
\begin{equation}\label{eq:gaussian}
\kappa_{\rm Gauss}=-e^{-2p}\Delta p =4(\omega\, \frac{\partial^2
\omega}{\partial z {\partial}\bar{z}}-\big|\frac{\partial
\omega}{\partial z}\big|^2),
\end{equation}
which is a real and scalar quantity. In view of
(\ref{eq:rfrommetric}), it follows that $r$ is holomorphic if and
only if $\kappa_{\rm Gauss}=0$. The corresponding projective
connection is
\begin{equation}\label{eq:qfrommetric}
q= \frac{\partial r}{\partial z}- \frac{1}{2}r^2
=2(\frac{\partial^2p}{\partial z^2}- (\frac{\partial p}{\partial
z})^2) =-2\,\frac{\partial^2\omega /\partial z^2}{\omega}.
\end{equation}
From (\ref{eq:gaussian}), (\ref{eq:qfrommetric}) follows
$\frac{\partial \kappa}{\partial z}=- 2\omega^2\frac{\partial
q}{\partial\bar{z}}$ or, since $\kappa$ is real,
$$
d\kappa_{\rm Gauss}=- 2\omega^2(\frac{\partial
q}{\partial\bar{z}}dz+\frac{\partial \bar{q}}{\partial
{z}}d\bar{z}).
$$
Cf. \cite{Gustafsson-Peetre}. It follows that $q$ is holomorphic if
and only if the curvature $\kappa_{\rm Gauss}$ is constant. When
this is the case there is a corresponding projective structure, and
working in any projective coordinate $t$, equation (\ref{eq:Lomega})
and its conjugate can be directly integrated to give
$\omega(t)=at\bar{t} +bt+\bar{b}\bar{t}+c$, $a$, $c$ real, $b$
complex. The value of the curvature comes out to be $\kappa_{\rm
Gauss} =4(ac-|b|^2)$. It also follows that if $\Omega$ is complete
for the metric, meaning roughly that $\omega=0$ on $\partial\Omega$,
then in any projective coordinate $\partial\Omega$ is a circle or a
straight line.

Being complete and having constant negative curvature characterizes
the Poincar\'{e} metric up to a constant factor. Starting from the
domain, first the Poincar\'{e} metric (normalized so that
$\kappa_{\rm Gauss}=-4$), then the projective connection $q(z)$ and
finally projective coordinates, can be obtained directly by solving
differential equations in $\Omega$. In fact, $p(z)$ satisfies by
(\ref{eq:gaussian}) a Liouville equation and blows up on the
boundary:
$$
\begin{cases}
\Delta p =4e^{2p} \quad {\rm in\,\,} \Omega,\\
p=+\infty  \quad {\rm on\,\,} \partial\Omega.
\end{cases}
$$
It can be shown  that this boundary value problem, when properly
formulated, has a unique solution $p$ (see, e.g., \cite{Ahlfors},
Ch.1). From this solution, $q$ is obtained via
(\ref{eq:qfrommetric}), and finally projective coordinates $t$ are
gotten by solving (\ref{eq:SchwarzianDE}), more precisely by taking
the quotient between two solutions of (\ref{eq:secondorderDE}). The
above indicates one of the early attempts to solve the
uniformization theorem. It was proposed by H.~A.~Schwarz and later
brought to an end by Picard, Poincar\'e and Bieberbach; see
\cite{Poincare}, \cite{Bieberbach}.

This approach has recently gained new interest in some versions of
string theory, in which the above Schwarzian derivative
$q(z)=\{t,z\}_2$ of the lift map $t$ to the universal covering
surface takes the role of being the energy-momentum tensor, which
hence is a projective connection. See for example \cite{Polchinski},
\cite{Takhtajan-Teo}.


\subsection{Examples of projective connections}

If $\Omega$ is a plane domain bounded by finitely many analytic
curves, several projective structures and connections can be
naturally associated to $\Omega$.

\begin{ex} The trivial one,  $q(z)=0$, i.e., with the coordinate variable
$z$ in ${{\mathbb{C}}}$ as a projective coordinate.
\end{ex}

\begin{ex} Let $t: \Omega\to \Omega_{\rm circ}$ be a conformal map of
$\Omega$ onto a domain $\Omega_{\rm circ}$ bounded by circles. This
map is not uniquely determined, but any two such maps are related by
a M\"obius transformation. Hence it defines a unique projective
structure for which $t$ is a projective coordinate. This projective
structure on $\Omega$ extends to a projective structure on the
Schottky double $\Hat{\Omega}$, as will be seen more exactly in the
next section. The associated connection coefficient is obtained from
(\ref{eq:SchwarzianDE}).
\end{ex}

\begin{ex} The universal covering surface of $\Omega$ is conformally
equivalent to the unit disk. Let $\pi: {{\mathbb{D}}}\to \Omega$ be a
universal covering map. The inverse of $\pi$ is multivalued, unless
$\Omega$ is simply connected, but the different branches of
$\pi^{-1}$ are related by M\"obius transformations. Hence a unique
projective structure on $\Omega$ is obtained by using local branches
of $\pi^{-1}$ as projective coordinates. The connection coefficients
$q(z)$ are related to the (branches of the) multivalued liftings
$\pi^{-1}$ by (\ref{eq:SchwarzianDE}), that is
$$
\{\pi^{-1}(z),z\}_2 = q(z).
$$
\end{ex}

\begin{ex} Let $\hat{\Omega}$ be the Schottky double of $\Omega$ and let
$\pi: U\to \hat{\Omega}$ be a universal covering map for
$\hat{\Omega}$. This $U$ can be taken to be either the Riemann
sphere (if $\Omega$ is simply connected), the complex plane (if
$\Omega$ is doubly connected) or the unit disk (if $\Omega$ has at
least three boundary components). In any case, $\hat{\Omega}$, and
hence $\Omega$, is provided with a unique projective structure by
using local inverses of the uniformization map as projective
coordinates. If $\Omega$ is doubly connected, it even gets an affine
structure.

In general, the projective structures in the above examples are all
different. For example, if $\Omega$ is the annulus
$A_{1,R}$, then by straight-forward computations one
finds that the projective connections are given by
$$
q(z)=\frac{a}{z^2},
$$
where $a=0$ in the cases 1) and 2) above, $a=\frac{1}{4}$ in case 3)
and $a=\frac{1}{4}(1+\frac{\pi^2}{(\log R)^2})$ in case 4). If
$\Omega$ is replaced by a noncircular doubly connected domain also
the cases 1) and 2) will be unequal.
\end{ex}

\begin{ex}\label{ex:taylor coefficients}
Connections also come up from Taylor coefficients of regular parts
of certain domain functions. For example, for the Taylor
coefficients of the regular part of the complex Green's function,
see Section~\ref{sec:taylor coefficients}, we have the following.

\end{ex}

\begin{prop}
In the notation of Section~\ref{sec:taylor coefficients}, plus
(\ref{eq:l}), the quantities
\begin{equation}\label{eq:greenconnection0}
 p(\zeta)=-c_0(\zeta),
\end{equation}
\begin{equation}\label{eq:greenconnection1}
r(\zeta)=-2c_1(\zeta)=2\frac{\partial p}{\partial z},
\end{equation}
\begin{equation}\label{eq:greenconnection} q(\zeta)
=-6(\frac{\partial c_1(\zeta)}{\partial \zeta}-2 c_2(\zeta))=6\pi
\ell(\zeta,\zeta)
\end{equation}
transform under conformal mapping as, respectively, the real part of
a $0$-connection, an affine connection and a projective connection.
\end{prop}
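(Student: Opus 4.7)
The plan is to exploit the conformal invariance of the Green's function to derive a transformation law for the analytic completion ${\mathcal H}(z,\zeta)$ under a conformal map $f:\Omega\to\tilde\Omega$, and then read off the transformation rules for $p,r,q$ by matching Taylor coefficients in powers of $(z-\zeta)$. Since $G(z,\zeta)=\tilde G(f(z),f(\zeta))$, combining with (\ref{eq:GH}) gives $H(z,\zeta)=\tilde H(f(z),f(\zeta))-\log|\frac{f(z)-f(\zeta)}{z-\zeta}|$. Taking the analytic completion in $z$ and fixing the free imaginary constant so as to preserve the normalization $\Im{\mathcal H}(\zeta,\zeta)=0$ yields
$$
{\mathcal H}(z,\zeta)=\tilde{\mathcal H}(f(z),f(\zeta))-\log\frac{f(z)-f(\zeta)}{z-\zeta}+\I\arg f'(\zeta).
$$

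First I would Taylor-expand both sides around $z=\zeta$. A direct $\log(1+\cdots)$ calculation gives
$$
\log\frac{f(z)-f(\zeta)}{z-\zeta}=\log f'(\zeta)+\frac{f''(\zeta)}{2f'(\zeta)}(z-\zeta)+\Big(\frac{f'''(\zeta)}{6f'(\zeta)}-\frac{(f''(\zeta))^2}{8f'(\zeta)^2}\Big)(z-\zeta)^2+O((z-\zeta)^3),
$$
while substituting the Taylor series of $f(z)-f(\zeta)$ into $\tilde{\mathcal H}(\tilde z,\tilde\zeta)=\sum_n\tilde c_n(\tilde\zeta)(\tilde z-\tilde\zeta)^n$ expands the first term on the right in the same variable. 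Matching the coefficient of $(z-\zeta)^0$ produces $c_0(\zeta)=\tilde c_0(\tilde\zeta)-\log|f'(\zeta)|$, i.e. $\tilde p(\tilde\zeta)=p(\zeta)-\Re\{\tilde z,z\}_0$, the real part of the $0$-connection rule (\ref{eq:zeroconnection}). Matching the coefficient of $(z-\zeta)^1$ yields $c_1(\zeta)=\tilde c_1(\tilde\zeta)f'(\zeta)-\frac{f''(\zeta)}{2f'(\zeta)}$; multiplying by $-2$ and using $\{\tilde z,z\}_1=f''/f'$ gives $\tilde r(\tilde\zeta)f'(\zeta)=r(\zeta)-\{\tilde z,z\}_1$, which is exactly the affine rule (\ref{eq:affconnection}).

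For the projective property I would then differentiate the $n=1$ relation in $\zeta$, which introduces a term $(f'(\zeta))^2\partial_{\tilde\zeta}\tilde c_1(\tilde\zeta)$ together with corrections linear in $\tilde c_1$ and in derivatives of $f$. Assembling the combination $q(\zeta)=12c_2(\zeta)-6\partial_\zeta c_1(\zeta)$ causes the $\tilde c_1$ contributions to cancel, leaving $q(\zeta)=\tilde q(\tilde\zeta)f'(\zeta)^2+\bigl(\frac{f'''}{f'}-\frac{3}{2}(f''/f')^2\bigr)=\tilde q(\tilde\zeta)f'(\zeta)^2+\{\tilde z,z\}_2$, the projective rule (\ref{eq:projconnection}). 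The alternative formula $r=2\partial_\zeta p$ follows by differentiating $c_0(\zeta)=H(\zeta,\zeta)$ and using the symmetry $H(z,\zeta)=H(\zeta,z)$ together with $\partial_z H=\tfrac12\partial_z{\mathcal H}$ (valid since ${\mathcal H}$ is analytic in $z$), which yields $\partial_\zeta c_0=c_1$. Finally, the identification with the $\ell$-kernel comes from $\ell(z,\zeta)=\frac{2}{\pi}\partial_z\partial_\zeta H(z,\zeta)=\frac{1}{\pi}\partial_z\partial_\zeta{\mathcal H}(z,\zeta)$ (read off from (\ref{eq:l}) and (\ref{eq:L})); using $w=z-\zeta$ as auxiliary variable and the chain rule $\partial_\zeta|_z=\partial_\zeta|_w-\partial_w|_\zeta$ on the expansion ${\mathcal H}=\sum c_n(\zeta)w^n$ gives $\partial_z\partial_\zeta{\mathcal H}|_{z=\zeta}=\partial_\zeta c_1(\zeta)-2c_2(\zeta)$, which delivers the stated identity $q(\zeta)=6\pi\ell(\zeta,\zeta)$.

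The hard part will be the second-order matching, where the Schwarzian derivative must emerge exactly. Three distinct contributions must combine with the right coefficients---the $(z-\zeta)^2$ coefficient of $\log\frac{f(z)-f(\zeta)}{z-\zeta}$, the cross term $\tilde c_1(\tilde\zeta)\cdot\frac{f''(\zeta)}{2}$ coming from the $f(z)-f(\zeta)$ series inside $\tilde{\mathcal H}$, and the derivative of $\frac{f''(\zeta)}{2f'(\zeta)}$ produced by $\partial_\zeta c_1$---and the integers $12$ and $-6$ in the definition of $q$ are precisely those that force the non-tensorial remainder to reassemble into $\frac{f'''}{f'}-\frac{3}{2}(f''/f')^2$.
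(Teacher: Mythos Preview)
Your proof is correct and follows essentially the same route as the paper: conformal invariance of $G$ gives the transformation law ${\mathcal H}(z,\zeta)=\tilde{\mathcal H}(\tilde z,\tilde\zeta)-\log\frac{\tilde z-\tilde\zeta}{z-\zeta}$, and the connection rules are then read off from the behaviour of the Taylor coefficients. The only difference is one of packaging: where you expand $\log\frac{f(z)-f(\zeta)}{z-\zeta}$ explicitly and verify by hand that the combination $12c_2-6\partial_\zeta c_1$ forces the remainder to assemble into the Schwarzian, the paper first identifies $\partial_\zeta c_1-2c_2=\partial_z\partial_\zeta{\mathcal H}\big|_{z=\zeta}$ and then appeals to the ready-made limit formula $\{\tilde z,z\}_2=6\lim_{\zeta\to z}\partial_z\partial_\zeta\log\frac{\tilde z-\tilde\zeta}{z-\zeta}$ from (\ref{eq:logdiff}), which makes the projective step a one-liner and bypasses the bookkeeping you flag as ``the hard part''.
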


\begin{proof}
The Green's function transforms under conformal mappings $f:\Omega
\to\tilde{\Omega}$, $\tilde{z}=f(z)$, as
$G(z,\zeta)=\tilde{G}(\tilde{z},\tilde{\zeta})$, hence
$$
{\mathcal H}(z,\zeta)=-\log\frac{\tilde{z}-\tilde{\zeta}}{z-\zeta}
+\tilde{{\mathcal H}}(\tilde{z},\tilde{\zeta}).
$$
Now the assertions follow by easy computations, using the formulas
(\ref{eq:logdiff}):
$$
c_0(\zeta )={\mathcal H}(\zeta,\zeta)={H}(\zeta,\zeta),
$$
\begin{equation}\label{eq:c1fromc0}
c_1(\zeta )=\frac{\partial {\mathcal H}(z,\zeta)}{\partial
z}\big|_{z=\zeta} =2\frac{\partial { H}(z,\zeta)}{\partial
z}\big|_{z=\zeta} =\frac{\partial }{\partial \zeta}{ H}(\zeta,\zeta)
=\frac{\partial }{\partial \zeta}c_0(\zeta ),
\end{equation}
$$
\frac{\partial c_1(\zeta)}{\partial \zeta}-2c_2(\zeta
)=\frac{\partial^2 {\mathcal H}(z,\zeta)}{\partial
z\partial\zeta}\big|_{z=\zeta} =2\frac{\partial^2 {
H}(z,\zeta)}{\partial z\partial\zeta}\big|_{z=\zeta} =-\pi
\ell(\zeta,\zeta).
$$
\end{proof}

We conclude from (\ref{eq:greenconnection}) that $d\sigma=
e^{-c_0(\zeta)}|d\zeta|$ is a conformally invariant metric and that
$r(z)$ is the associated affine connection. However, $q(z)$ in
(\ref{eq:greenconnection}) is in general not the same as the
projective connection associated to $r(z)$ by the general receipt
(\ref{qr}), namely
$$
Q(\zeta)=\frac{\partial
r(\zeta)}{\partial\zeta}-\frac{1}{2}r(\zeta)^2 =-2 \frac{\partial
c_1(\zeta)}{\partial\zeta}-2c_1(\zeta)^2.
$$
In the multiply connected case $Q(\zeta)$ is not holomorphic,
whereas $q(\zeta)$ in (\ref{eq:greenconnection}) is always
holomorphic.

\begin{ex}
Coefficients of linear differential equation in $\Omega$ transform
in complicated ways under changes of coordinates, and in some cases
exactly as connections. For example, it is well-known that a second
order equation always can be written on the form
(\ref{eq:secondorderDE}), that is $u''+\frac{1}{2}Q(z)u=0$. Then
$Q(z)$ works as a projective connection, and the corresponding
projective structure uniformizes the equation. This differential
equation will be further discussed in
Section~\ref{sec:prepotential}.
\end{ex}

\begin{ex} Let us spell out the relevant quantities for the unit
disk and upper half-plane provided with the Poincar\'{e} metric:

\begin{itemize}

\item[(i)] Unit disk:
$$
\omega(z) = 1-|z|^2, \quad r(z)=\frac{2\bar{z}}{1-|z|^2}, \quad
q(z)=0, \quad \kappa_{\rm Gauss} =-4.
$$

\item[(ii)] Upper half-plane:
$$
\omega(z) = 2y, \quad r(z)=\frac{\I}{y}, \quad q(z)=0, \quad
\kappa_{\rm Gauss} =-4.
$$

\end{itemize}

Note that $r(z)$ is singular on the boundary, while $q(z)$ is not.
\end{ex}


\section{Connections on the Schottky double of a plane
domain}\label{sec:connections on double}

\subsection{Generalities}

If the Riemann surface is the Schottky double of a plane domain
$\Omega$, then the connection coefficients can be described in terms
of pairs of functions on $\Omega$, in analogy with the previous
description (\ref{eq:halforderdiff}) of differentials of any
half-integer order. Recall (\ref{eq:Schwarztransition}) that the
transition function between the coordinate $\tilde z=\bar z$ on the
back-side $\tilde \Omega$ and $z$ on the front side $\Omega$ is
given by the Schwarz function. Writing the unit tangent vector as
$T(z)=\frac{dz}{ds}=e^{\I\theta}$, the curvature of $\partial\Omega$
is $\kappa=\frac{d\theta}{ds}=-\I T'(z)$, where the prime denotes
the complex derivative $\frac{\partial}{\partial z}$ for the
analytically extended $T(z)$. Then, along $\partial\Omega$,
$$
\{S(z),z\}_0=-2\log T(z)=-2\I\theta,
$$
$$
\{S(z),z\}_1 dz=-2\frac{T'(z)}{T(z)}dz = -2\I d\theta=-2\I\kappa ds,
$$
$$
\{S(z),z\}_2 dz^2=-2\frac{T''(z)}{T(z)}dz^2 =
-2\I\frac{d\kappa}{ds}ds^2.
$$

Therefore, a $0$-connection on $\Hat\Omega$ can be described as a
pair of functions, $p_1(z)$ (for the front side) and $p_2(z)$
(for the backside), on $\Omega$,
satisfying on
$\partial\Omega$ the matching condition
$$
p_1(z)=\overline{p_2(z)}-2\I \theta.
$$
Similarly, an affine connection is represented by a pair $r_1(z)$,
$r_2(z)$ satisfying the matching condition
$r_1(z)dz=\overline{r_2(z)dz}-2\I \kappa ds$, or
\begin{equation}\label{eq:affconnectdouble}
r_1(z)T(z)=\overline{r_2(z)T(z)}-2\I \kappa \quad {\rm on\,\,}
\partial\Omega,
\end{equation}
and a projective connection by a pair $q_1(z)$, $q_2(z)$ with
$$
q_1(z)T(z)^2=\overline{q_2(z)T(z)^2}-2\I \frac{d\kappa}{ds} \quad
{\rm on\,\,}
\partial\Omega.
$$
Note that if $z$ and $\tilde{z}$ are both projective coordinates
then $q_1=q_2=0$ and it follows that $\frac{d\kappa}{ds}=0$ on
$\partial\Omega$, i.e., that $\Omega$ is a circular domain.

We may notice that (\ref{eq:affconnectdouble}) is consistent with
the general fact \cite{Schiffer-Hawley} that the sum of residues of
a meromorphic affine connection on a compact Riemann surface of
genus ${\texttt{g}}$ equals $2({\texttt{g}}-1)$ (our definition of
connection differs from that in \cite{Schiffer-Hawley} by a minus
sign). In fact, in the case of the Schottky double the sum is, by
(\ref{eq:affconnectdouble}),
$$
\frac{1}{2\pi \I}\int_{\partial \Omega} r_1 (z)dz + \frac{1}{2\pi
\I}\int_{-\partial \Omega} \overline{{r}_2 (z)}d\bar{z} =
-\frac{1}{\pi}\int_{\partial\Omega}{\kappa}ds=2({\texttt{g}}-1)
$$
since there is one outer component and ${\texttt{g}}$ inner ones.

Naturally, one may be particularly interested in symmetric, or
``real", connections, namely those for which $p_1=p_2$, $r_1=r_2$ or
$q_1=q_2$ (respectively). The above matching conditions then become,
for the single representatives $p(z)$, $r(z)$, $q(z)$ on $\Omega$,
$$
\Im p(z)=- \theta \quad {\rm on\,\,}
\partial\Omega,
$$
\begin{equation}\label{eq:symmaffconnectdouble}
\Im (r(z)T(z))=-\kappa \quad {\rm on\,\,}
\partial\Omega,
\end{equation}
$$
\Im (q(z)T(z)^2)=- \frac{d\kappa}{ds} \quad {\rm on\,\,}
\partial\Omega.
$$


\subsection{Formulas for the curvature of a curve}

The differential parameter $\{z,w\}_1$ turns out to be a useful tool
for summarizing various formulas for the curvature of a curve in the
complex plane.

The definition of  $\{z,w\}_1$, with $z=f(w)$ analytic, can be
written in differential form as
$$
\{z,w\}_1 dw= d\log \frac{dz}{dw}=d(\log dz-\log dw).
$$
Hence
$$
\Im (\{z,w\}_1 dw)=d\arg dz-d\arg dw={}^*d(\log |dz|-\log |dw|).
$$
To interpret this formula one should let $w$ run along a curve
$\Gamma$, say with parametrization $t\mapsto w(t)$,
$t\in{\mathbb{R}}$. Then $z(t)=f(w(t))$ runs along $f(\Gamma)$ and one
just replaces $dz$ and $dw$  by $z'(t)$ and $w'(t)$, respectively.

Denoting the curvature of $\Gamma$ by $\kappa_w$ and the curvature
of $f(\Gamma)$ by $\kappa_z$ we thus have the following formula.

\begin{prop}
Under a conformal mapping $f:z\mapsto w$, the curvature of a curve
$\Gamma$ and its image curve $f(\Gamma)$ are related by
\begin{equation}\label{eq:kzkw}
\Im (\{z,w\}_1 dw)=\kappa_z ds_z-\kappa_w ds_w.
\end{equation}
Here $ds_z=|dz|$, $ds_w=|dw|$.
\end{prop}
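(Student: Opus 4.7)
The plan is to recognize that both sides of the claimed identity represent the same geometric object, namely the differential of the tangent angle, computed once in the target variable and once in the source variable. Nothing beyond the definition of $\{z,w\}_1$ and the classical complex-analytic formula for planar curvature is needed.

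First I would recall that for any smooth arc parametrized by $t\mapsto w(t)$ with unit tangent $T_w=dw/ds_w=e^{\I\theta_w}$, the curvature satisfies $\kappa_w=d\theta_w/ds_w$. Since $\arg(dw)=\arg(T_w\,ds_w)=\theta_w$ along the parametrization (with the convention $ds_w>0$), this is the same as the differential identity
\[
\kappa_w\,ds_w=d\theta_w=d\arg(dw)=\Im\,d\log dw.
\]
The analogous identity for the image curve $f(\Gamma)$, parametrized by $t\mapsto z(t)=f(w(t))$, reads $\kappa_z\,ds_z=\Im\,d\log dz$.

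Next I would take the imaginary part of the defining differential identity that already appears in the text preceding the proposition,
\[
\{z,w\}_1\,dw=d\log\frac{dz}{dw}=d\log dz-d\log dw,
\]
which is purely algebraic. Taking $\Im$ and substituting the two expressions above yields
\[
\Im(\{z,w\}_1\,dw)=\Im\,d\log dz-\Im\,d\log dw=\kappa_z\,ds_z-\kappa_w\,ds_w,
\]
which is the asserted formula.

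The only thing to watch is a sign/orientation bookkeeping issue: the tangent-angle formula $\kappa\,ds=d\arg(dw)$ presupposes that the curve is traversed in the direction that makes $ds>0$, and that the curvature is measured with respect to the same orientation on both $\Gamma$ and $f(\Gamma)$. Since $f$ is conformal and thus orientation-preserving, this consistency is automatic, so there is no genuine obstacle; the proof is essentially a repackaging of the remark already made just before the proposition.
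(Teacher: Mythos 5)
Your argument is correct and is essentially the paper's own: the paper derives the formula by taking the imaginary part of $\{z,w\}_1\,dw=d\log dz-d\log dw$ and identifying $\Im\,d\log dz=d\arg dz=\kappa_z\,ds_z$ (and likewise for $w$) along the parametrized curve. Your extra remark on orientation is a harmless clarification, not a deviation.
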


\begin{ex} (The curvature in terms of a real parameter.)
With $\Gamma={\mathbb{R}}$ and $z=f(w)$, so that $z=f(t)$ with $t=\Re
w$ parametrizes $f(\Gamma)$, we have $\kappa_w=0$, which gives the
well-known formula
$$
\kappa_z= \Im(\frac{d}{dw}(\log \frac{dz}{dw}) \frac{dw}{ds_z})
+\kappa_w\frac{ds_w}{ds_z}
$$
$$
=\Im(\frac{f''(w)}{f'(w)} \frac{dw}{|f'(w)||dw|})
=\frac{1}{|f'(t)|}\Im\frac{f''(t)}{f'(t)}.
$$
\end{ex}

\begin{ex} (The curvature in terms of an angular parameter; essentially Study's formula
\cite{Polya}, p.125.) With $\Gamma=\partial{\mathbb{D}}$, $w=e^{\I
t}$, $dw=\I wd t$ we have,
$$
\kappa_z=\Im (\{z,w\}_1 \frac{dw}{ds_z})+\kappa_w\frac{ds_w}{ds_z}
=\Im(\frac{f''(w)}{f'(w)} \frac{\I w dt}{|f'(w)||dt|})
+\frac{1}{|f'(w)|}
$$
$$
=\frac{1}{|f'(w)|}(\Re\frac{wf''(w)}{f'(w)}+1).
$$
\end{ex}

\begin{ex} (The curvature of a curve given as a
level line of a harmonic function.) Let $\Gamma$ be any level line
of a harmonic function $u$ in the $z$-plane. Assume that the curve
is nonsingular, so that $\nabla u\ne 0$ on $\Gamma$. With $v$ any
harmonic conjugate of $u$, $w=u+iv=g(z)$ maps $\Gamma$ into a
vertical line in the $w$-plane. Thus $\kappa_w=0$ and the curvature
of $\Gamma$ is obtained from
$$
\kappa_z ds_z =\Im (\{z,w\}_1 dw)+ \kappa_w ds_w =-\Im (\{w,z\}_1
dz)
$$
$$
=\Im (\frac{\partial}{\partial z}(\log \frac{\partial w}{\partial
z}) dz)
=\Im (2\frac{\partial}{\partial z}\log (|2\frac{\partial u}{\partial
z}|) dz)
$$
$$
=-\frac{\partial}{\partial x}\log |\nabla u| dy +
\frac{\partial}{\partial y}\log |\nabla u| dx
=-\frac{\partial}{\partial n}\log |\nabla u| ds_z.
$$
We may may also extract from the above the formula (cf.
\cite{Jerrard})
$$
\kappa_z=|g'(z)|\Re \frac{g''(z)}{g'(z)^2}.
$$

In summary, for a level curve $\Gamma$ of a harmonic function $u$
the curvature is given by
\begin{equation}\label{eq:kz}
\kappa_z =-\frac{\partial}{\partial n}\log |\nabla u| =-\frac{d}{d
n}\log |\frac{du}{dn}| =-{\frac{d^2u}{dn^2}}/{\frac{du}{dn}}.
\end{equation}
In the last expressions we used $\frac{d}{d n}$ in place of
$\frac{\partial}{\partial n}$ because the latter becomes obscure
when applied twice ($\frac{\partial}{\partial n}$ is not really a
partial derivative since $n$ is not a coordinate of a coordinate
system; $\frac{d}{d n}$ should be interpreted as a directional
derivative along the straight line in the normal direction).

In higher dimensions, (\ref{eq:kz}) gives the mean curvature of a
hypersurface given as the level surface of a harmonic function. This
can easily be deduced from the more general (and well-known, cf.
\cite {Frankel}) formula $\kappa_{\rm mean}={\rm div\,}\frac{\nabla
u}{|\nabla u|}$, for the mean curvature of a level surface of any
smooth function $u$.
\end{ex}


\subsection{Geodesics}

The equation for geodesic curves $z=z(t)$ for the metric
(\ref{eq:hermitean}) is
\begin{equation}\label{eq:geodesic}
\frac{d^2 z}{dt^2} + r(z) (\frac{dz}{dt})^2=0,
\end{equation}
where $r(z)$ is the corresponding affine connection
(\ref{eq:rfrommetric}) and the parameter $t$ measures arc-length
with respect to the metric. This equation is the usual geodesic
equation in differential geometry \cite{Frankel}, just written in
complex analytic language. The classical Christoffel symbols
$\Gamma^i_{jk}$ ($i,j,k=1,2$) turn out to coincide with the
components $\pm\Re r(z), \pm\Im r(z)$. An intuitive direct
derivation of (\ref{eq:geodesic}) goes as follows.

The tangent vector along the curve is $\frac{dz}{dt}$ and the
geodesic equation is supposed to say that this propagates by
parallel transport, i.e., has covariant derivative zero along the
curve. Since a vector in one complex variable can be thought of as a
differential of order minus one the relevant covariant derivative
(\ref{eq:affderivative}) will be
$\nabla_{-1}=\frac{\partial}{\partial z}+r(z)$. This is the
covariant version of $\frac{\partial}{\partial z}$, and the
covariant version of $\frac{d}{d t}$ then is
$\frac{\partial}{\partial t}+r(z)\frac{dz}{d t}$. Applying this to
$\frac{dz}{dt}$ gives (\ref{eq:geodesic}).

It is convenient to write (\ref{eq:geodesic}) on the form
\begin{equation}\label{eq:geodesiclog}
\frac{d}{dt}\log \frac{dz}{dt} + r(z) \frac{dz}{dt}=0
\end{equation}
and to decompose it into real and imaginary parts. The real part
just contains internal information about how the curve is
parametrized, namely saying that $t$ measures arc-length with
respect to the metric. The information about the geometry of the
curve (that it is a geodesic) is entirely contained in the imaginary
part,
\begin{equation}\label{eq:geodesicarg}
\frac{d}{dt}\arg \frac{dz}{dt} + \Im (r(z) \frac{dz}{dt})=0.
\end{equation}

In equation (\ref{eq:geodesicarg}), $t$ can be taken to be any
parameter, for example euclidean arc-length $s$. Then the first term
equals the euclidean curvature of the geodesic and $\frac{dz}{ds}$
is the unit tangent vector. Thus we have
\begin{prop}
The curvature $\kappa=\kappa(z, \frac{dz}{ds})$ of the geodesic
passing through a point $z$ and having direction $\frac{dz}{ds}$ (a
unit vector) is given by
\begin{equation}\label{eq:curvature1}
\kappa (z, \frac{dz}{ds}) =-\Im(r(z)
\frac{dz}{ds})=-\Im(2\frac{\partial p}{\partial{z}} \frac{dz}{ds})
=- \frac{\partial p}{\partial n},
\end{equation}
where $\frac{\partial p}{\partial n}$ denotes the derivative in the
rightward normal direction of the curve. In particular, the sharp
bound
\begin{equation}\label{eq:curvature2}
|\kappa (z,\frac{dz}{ds})|\leq  |r(z)|
\end{equation}
holds for all geodesics passing through $z$, and equality is
attained if and only $\frac{dz}{ds}$ is tangent to the level line of
$p$ at $z$.
\end{prop}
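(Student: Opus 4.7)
The plan is to read off both (\ref{eq:curvature1}) and (\ref{eq:curvature2}) directly from the imaginary part (\ref{eq:geodesicarg}) of the geodesic equation that was already derived, followed by a Cauchy--Schwarz estimate. The argument splits into three short equalities and then the bound.

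First I would take $t=s$ (euclidean arc-length) in (\ref{eq:geodesicarg}), so that $\frac{dz}{ds}$ is a unit tangent vector along the geodesic and, by the very definition of signed euclidean curvature, $\frac{d}{ds}\arg \frac{dz}{ds}$ equals $\kappa(z,\frac{dz}{ds})$. Solving (\ref{eq:geodesicarg}) for $\kappa$ yields the first equality in (\ref{eq:curvature1}), and the second equality is then an immediate substitution via $r=2\partial p/\partial z$ from (\ref{eq:rfrommetric}).

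For the third equality I would write $T=\frac{dz}{ds}=e^{\I\theta}$ and expand the Wirtinger derivative, obtaining
\[
\Im\bigl(2\tfrac{\partial p}{\partial z}\,T\bigr)=\Im\bigl((\partial_x p-\I\,\partial_y p)\,e^{\I\theta}\bigr)=\sin\theta\,\partial_x p-\cos\theta\,\partial_y p.
\]
Since the rightward unit normal to the oriented curve is $T$ rotated by $-\pi/2$, namely $(\sin\theta,-\cos\theta)$, the right member is precisely $\frac{\partial p}{\partial n}$, which gives the last equality of (\ref{eq:curvature1}).

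Finally, (\ref{eq:curvature2}) is an application of Cauchy--Schwarz: $|\Im(r(z)\frac{dz}{ds})|\leq|r(z)|\,|\frac{dz}{ds}|=|r(z)|$, with equality iff $r(z)\frac{dz}{ds}\in\I\BR$, equivalently $\frac{dz}{ds}=\pm\I\,\overline{r(z)}/|r(z)|$. Since $p$ is real, $\overline{r(z)}=2\overline{\partial p/\partial z}=\partial_x p+\I\,\partial_y p$ is the complex representation of the euclidean gradient $\nabla p(z)$, so its perpendicular direction $\pm\I\,\overline{r(z)}/|r(z)|$ is exactly the tangent direction to the level line $\{p=p(z)\}$ at $z$. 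This pins down the equality case. There is no genuine obstacle to the argument; the only point requiring some care is keeping the sign and orientation conventions (rightward normal, signed curvature, Wirtinger versus real partial derivatives) consistent throughout.
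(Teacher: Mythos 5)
Your proof is correct and takes essentially the same route as the paper: there, too, \eqref{eq:curvature1} is read off by setting $t=s$ in \eqref{eq:geodesicarg} and identifying the first term with the euclidean curvature, with $r=2\partial p/\partial z$ from \eqref{eq:rfrommetric}. Your explicit check of the normal-derivative identity and of the equality case in \eqref{eq:curvature2} merely fills in details the paper leaves implicit, and both are carried out correctly.
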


The above estimate (\ref{eq:curvature2}) can be combined with
geometric estimates of $r(z)$. For example, for the coefficients of
the Green's function, as in Section~\ref{sec:Taylor} and
Example~\ref{ex:taylor coefficients}, we have by
(\ref{eq:greenconnection0}), (\ref{eq:greenconnection1}) that
$r(z)=-2c_1(z)$ is the affine connection for the metric
$d\sigma=e^{-c_0}|dz|$. For $c_1(z)$ we have the estimate in
Lemma~\ref{lem:estcn}, so the above proposition shows that
$$
|r(z)|\leq\frac{2}{d(z,\partial\Omega)}
$$
for this connection. When $\Omega$ is simply connected the metric in
question is the Poincar\'{e} metric. Combining with
(\ref{eq:curvature2}) we therefore have the following.

\begin{cor}
For a simply connected domain $\Omega$ provided with its
Poincar\'{e} metric, the curvature for any geodesics through a point
$z$ is subject to the estimate
$$
|\kappa (z,\frac{dz}{ds})|\leq\frac{2}{d(z,\partial\Omega)}.
$$
\end{cor}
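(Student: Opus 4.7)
The plan is to assemble the corollary from three ingredients already in hand. First, I would invoke the previous proposition, which states that for any metric of the form $d\sigma=e^{p(z)}|dz|$ with associated affine connection $r(z)=2\partial p/\partial z$, every geodesic through a point $z$ satisfies the sharp bound $|\kappa(z,dz/ds)|\leq |r(z)|$. So the problem reduces to estimating $|r(z)|$ for the Poincar\'e metric.

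Second, I would identify this connection explicitly for the Poincar\'e metric via the Taylor coefficients of the Green's function. In the simply connected case, the paper has noted that the conformally invariant metric $d\sigma=e^{-c_0(z)}|dz|$ coincides with the Poincar\'e metric. Comparing with (\ref{eq:hermitean}), this means $p(z)=-c_0(z)$, and by (\ref{eq:greenconnection1}) the associated affine connection is
\begin{equation*}
r(z)=2\frac{\partial p}{\partial z}=-2c_1(z).
\end{equation*}
Hence $|r(z)|=2|c_1(z)|$.

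Third, I would apply Lemma~\ref{lem:estcn} with $n=1$, which gives the bound
\begin{equation*}
|c_1(z)|\leq \frac{1}{d(z,\partial\Omega)}.
\end{equation*}
Combining these three facts yields
\begin{equation*}
|\kappa(z,dz/ds)|\leq |r(z)|=2|c_1(z)|\leq \frac{2}{d(z,\partial\Omega)},
\end{equation*}
which is the claim.

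There is really no obstacle here, since the corollary is an immediate synthesis of results established earlier in the paper; the only thing to be careful about is the identification of $d\sigma=e^{-c_0(z)}|dz|$ with the Poincar\'e metric, which requires simple connectivity (in the multiply connected case these metrics differ, which is why the corollary is stated only for simply connected $\Omega$).
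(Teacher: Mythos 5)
Your proposal is correct and follows exactly the argument the paper itself gives just before the corollary: combine the bound $|\kappa(z,\frac{dz}{ds})|\leq|r(z)|$ from the preceding proposition with the identification $r(z)=-2c_1(z)$ for the metric $e^{-c_0}|dz|$ (which is the Poincar\'e metric in the simply connected case) and the case $n=1$ of Lemma~\ref{lem:estcn}. Nothing is missing.
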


It is allowed here that $\Omega\subset{\mathbb{P}}$ contains the
point of infinity, and at least in this generality the corollary is
sharp. Indeed, with $\Omega
={\mathbb{P}}\setminus\overline{{\mathbb{D}}(0,\epsilon)}$,
$\epsilon>0$ small, the circle with center at any finite point
$c\in\Omega$, and having radius $|c|$, is almost a geodesic for the
Poincar\'{e} metric in $\Omega$. The curvature for this circle is
$1/|c|$, hence the bound in the corollary is essentially attained at
the point $2c$ on the circle.


\subsection{Some applications in physics}

Projective and affine connections come up naturally in both
classical and modern physics. As to modern physics, e.g., conformal
field theory and string theory, we have already mentioned the
example with the energy-momentum tensor as a projective connection.
For further examples, see \cite{Dubrovin}, \cite{Takhtajan-Teo} and
references therein.

For classical physics, we shall briefly mention one example from
vortex dynamics. Consider in $\Omega$ an incompressible inviscid
fluid which is irrotational except for a point vortex of unit
strength at a point $z_0$. If $\Omega$ is simply connected this
makes the flow uniquely determined. The stream function $\psi$ will,
at each instant of time,  coincide with the Green's function,
$\psi(z)=G(z,z_0)$. However, the flow will not be stationary because
the vortex will move with the speed obtained by subtracting off,
from the general flow, the rotationally symmetric singular part
corresponding to $-\log|z-z_0|$ in the stream function. Thus in fact
$\psi=\psi(z,t)=G(z,z_0(t))$ and one deduces, in the notation of
Example~\ref{ex:taylor coefficients}, that the vortex moves along a
level line of $c_0(z)$ and that the velocity is given by
\begin{equation}\label{eq:vortex}
\frac{dz_0(t)}{dt}=-\I\overline{{c}_1(z_0(t))}=-\I\frac{\partial
c_0}{\partial \bar{z}}(z_0(t)).
\end{equation}
This means that $c_0(z)$ is a kind of stream function for the vortex
motion, called the {\it Routh stream function}, \cite{Lin},
\cite{Newton}. It also follows that the vortex motion is a
hamiltonian motion, determined by the symplectic form $dx\wedge dy$
(area $2$-form) and hamiltonian function $H=\frac{1}{2}c_0$. Note
that (\ref{eq:vortex}) is a special case of (\ref{eq:hamiltonian}).

If $\Omega$ is multiply connected one need to prescribe the
circulations $\gamma_j$ around the holes to make the flow uniquely
determined. These circulations will be preserved in time (Kelvin's
theorem) and everything will be as in the simply connected case but
with the ordinary Green's function replaced by the hydrodynamic
Green's function $G_{\gamma}(z,z_0)$, to be discussed in
Section~\ref{sec:Neumann}.

A related physical application comes from electrostatics. Think of
the complement  $K={\mathbb{C}}\setminus\Omega$ as a perfect conductor
and let it be grounded, so that its potential is zero. Then consider
a unit charge located at $z_0\in\Omega$. This will induce charges of
the opposite sign on $\partial K$, namely distributed so that the
density with respect to arc-length is given by the normal derivative
$\frac{\partial G(\cdot,z_0)}{\partial n_z}$. These charges will
exert a force on the charge at $z_0$, and this force is
$$
F(z_0)=-\frac{1}{2\pi}\overline{{c}_1(z_0(t))}.
$$
If $\Omega$ is multiply connected and the components of $K$ are not
grounded to a common zero, but are isolated from each other, then
the hydrodynamic Green function shall be used in place of the
ordinary one, with the $\gamma_j$ proportional to the respective
total charges isolated on the different components of $K$.


\section{On Neumann functions and the hydrodynamic Green's
function}\label{sec:Neumann}

\subsection{Definitions}

Recall that the Bergman kernel and the Schiffer kernel are given in
terms of the ordinary Green's function by (\ref{eq:K}),
(\ref{eq:L}). Decomposing the Green's function as in (\ref{eq:GH})
we also have, for the ``$\ell$-kernel'' (\ref{eq:l}),
$$
\ell(z,\zeta)=\frac{2}{\pi}\frac{\partial^2 H(z,\zeta)}{\partial
z\partial{\zeta}}.
$$
The reduced Bergman kernel (see
Proposition~\ref{prop:reducedBergman}) and its adjoint have several
similar representations in terms of Neumann type functions and
hydrodynamic Green's functions. Here we shall briefly review these
matters. See \cite{Sario-Oikawa}, \cite{Schiffer-Hawley} for more
details.

By a {\it Neumann function} we mean a domain function $N_a(z,\zeta)$
with a logarithmic singularity at a given point $\zeta\in\Omega$ and
satisfying Neumann boundary data given by a boundary function $a$
subject to
\begin{equation}\label{eq:neumann0}
\int_{\partial\Omega} ads =2\pi.
\end{equation}
The requirements on $N_a(z,\zeta)$ are, more exactly,
\begin{equation}\label{eq:neumann1}
N_a(z,\zeta)= -\log|z-\zeta|+ {\rm harmonic}
\quad {\rm in\,\,}\Omega,
\end{equation}
\begin{equation}\label{eq:neumann2}
-\frac{\partial N_a(\cdot, \zeta)}{\partial n}= a \quad {\rm
on\,\,}\partial\Omega,
\end{equation}
\begin{equation}\label{eq:neumann3}
\int_{\partial\Omega} N_a(\cdot,\zeta)ads
=0.
\end{equation}
The final condition (\ref{eq:neumann3}) can also be written
$$
\int_{\partial\Omega} N_a(\cdot,\zeta)^*dN_a(\cdot,z)=0,
$$
and is a normalization which guarantees that $N_a$ is symmetric:
$$
N_a(z,\zeta)=N_a(\zeta,z).
$$

A {\it hydrodynamic Green's function} (or ``modified Green's
function'') \cite{Lin}, \cite{Flucher}, \cite{Crowdy2} is defined in
terms of ${\texttt{g}}+1$ prescribed circulations,
$\gamma_0,\dots\gamma_{{\texttt{g}}}$ subject to the consistency
condition
\begin{equation}\label{eq:hydrogreen0}
\gamma_0+\dots+\gamma_{{\texttt{g}}}=2\pi.
\end{equation}
Let $\gamma=(\gamma_0,\dots\gamma_{{\texttt{g}}})$ denote the entire
vector of periods. The defining properties of the hydrodynamic Green
function, denoted $G_{\gamma}(z,\zeta)$, are
\begin{equation}\label{eq:hydrogreen1}
G_{\gamma}(z,\zeta)= -\log|z-\zeta|+ {\rm harmonic} \quad {\rm
in\,\,}\Omega,
\end{equation}
\begin{equation}\label{eq:hydrogreen2}
G_{\gamma}(z,\zeta)=b_j(\zeta)\quad {\rm for\,\,}z\in\Gamma_j,
\end{equation}
\begin{equation}\label{eq:hydrogreen3}
-\int_{\Gamma_j}\frac{\partial G_{\gamma}(z,\zeta)}{\partial
n}\,ds(z)= \gamma_j \quad (j=0,\dots,{\texttt{g}}),
\end{equation}
\begin{equation}\label{eq:hydrogreen4}
\sum_{j=0}^{{\texttt{g}}}\gamma_j\beta_j(\zeta)=0.
\end{equation}

Here $b_j(\zeta)$ denote ``floating constants'' (they cannot be
preassigned), i.e., (\ref{eq:hydrogreen2}) really means
$$
dG_{\gamma}(\cdot,\zeta)=0 \quad {\rm along\,\,}\partial\Omega.
$$
Condition (\ref{eq:hydrogreen4}) is a normalization which can be
written
$$
\int_{\partial\Omega}G_{\gamma}(\cdot,\zeta)\,
^*dG_{\gamma}(\cdot,z)=0
$$
and which guarantees the symmetry,
$$
G_{\gamma}(z,\zeta)=G_{\gamma}(\zeta,z).
$$

The hydrodynamic Green's function can be constructed from the
ordinary Green's function by
$$
G_{\gamma}(z,\zeta)=G(z,\zeta)+\sum_{k,j=0}^{{\texttt{g}}}
c_{kj}u_k(z) u_j(\zeta),
$$
where $u_k$, $k=0,\dots , {\texttt{g}}$ are the harmonic measures.
With the above `Ansatz',  the requirements (\ref{eq:hydrogreen1})
and (\ref{eq:hydrogreen2}) are automatically satisfied, and
(\ref{eq:hydrogreen3}), (\ref{eq:hydrogreen4}) give a system of
equations which determine the coefficients $c_{kj}$ uniquely. The
matrix $(c_{kj})$ is symmetric and positive semidefinite.


\subsection{Reproducing kernels for Dirichlet and Bergman spaces}

The Neumann and (hydrodynamic) Green's functions have logarithmic
singularities, hence cannot themselves be reproducing kernels for
any Hilbert spaces of harmonic functions. However, the singularities
disappear when subtracting them, and also after application of the
the differential operator $\frac{\partial^2 }{\partial
z\partial{\bar{\zeta}}}$. In these cases we do obtain reproducing
kernels for important spaces. Below we elaborate on these matters,
slightly extending the analysis in \cite{Bergman} and
\cite{Schiffer-Hawley}.

Let $D(u,v)$ denote the Dirichlet inner product:
$$
D(u,v)=\int_\Omega \nabla u\cdot \nabla v \,dxdy=\int_\Omega
du\wedge^*dv,
$$
let $a$ be boundary data as above, satisfying (\ref{eq:neumann0}),
and define the period vector $\gamma$ by
\begin{equation}\label{eq:gammaa}
\gamma_k = \int_{\Gamma_k} ads.
\end{equation}
Then (\ref{eq:hydrogreen0}) holds, by (\ref{eq:neumann0}). Define
$H(\Omega)$ to be the Hilbert space of all harmonic functions $u$ in
$\Omega$ which satisfy $D(u,u)<\infty$ and
\begin{equation}\label{eq:harm1}
\int_{\partial\Omega} u \,ads =0,
\end{equation}
and let $H_e(\Omega)$ be the subspace consisting of those functions
which in addition satisfy
\begin{equation}\label{eq:harm2}
\int_{\Gamma_j}  \frac{\partial u}{\partial n}ds =0\quad
(j=0,\dots,{\texttt{g}}).
\end{equation}
Note that (\ref{eq:harm1}) just fixes the additive constant in $u$
which the inner product leaves free. Except for this additive
constant $H(\Omega)$ does not depend on $a$. The following is a
slight extension of results in Ch.V:3 of \cite{Bergman}.

\begin{prop}
The reproducing kernels for $H(\Omega)$ and $H_e(\Omega)$ are,
respectively,
$$
k(z,\zeta)=\frac{1}{2\pi}(N_{a}(z,\zeta)-G(z,\zeta)),
$$
$$
k_e(z,\zeta)=\frac{1}{2\pi}(N_{a}(z,\zeta)-G_{\gamma}(z,\zeta)).
$$
In other words, $k(\cdot,\zeta)\in H(\Omega)$ and
\begin{equation}\label{eq:ukzeta}
u(\zeta)= D(u,k(\cdot, \zeta)) \quad (u\in H(\Omega)),
\end{equation}
and similarly for $k_e(\cdot,\zeta)$.
\end{prop}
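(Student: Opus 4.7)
The plan is to verify, for each of the two candidate kernels, the two defining properties: membership in the appropriate Hilbert space, and the reproducing identity with respect to the Dirichlet form $D$. In both cases the logarithmic singularities of $N_a$ and $G$ (resp.\ $G_\gamma$) cancel in the difference, so $k(\cdot,\zeta)$ and $k_e(\cdot,\zeta)$ are harmonic throughout $\Omega$ and, by analyticity of $\partial\Omega$, extend smoothly to the closure; in particular both have finite Dirichlet integral.

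For the membership conditions I would simply read off the boundary behaviour from the defining properties of $N_a$, $G$, and $G_\gamma$. Since $G$ vanishes on $\partial\Omega$, the normalization (\ref{eq:neumann3}) of $N_a$ gives at once $\int_{\partial\Omega} k(\cdot,\zeta)\,a\,ds=0$, so $k(\cdot,\zeta)\in H(\Omega)$. For $k_e$, I would use that $G_\gamma$ equals the constants $b_j(\zeta)$ on $\Gamma_j$ together with (\ref{eq:gammaa}) and the normalization (\ref{eq:hydrogreen4}) to conclude $\int_{\partial\Omega} k_e(\cdot,\zeta)a\,ds=0$; the additional condition (\ref{eq:harm2}) for $k_e$ follows from $\partial N_a/\partial n=-a$ and the period condition (\ref{eq:hydrogreen3}) on $G_\gamma$, the two contributions cancelling on each $\Gamma_j$.

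The reproducing identity then reduces to a single integration by parts. Since $k(\cdot,\zeta)$ is harmonic in $\Omega$, Green's first identity yields
$$D(u,k(\cdot,\zeta))=\int_{\partial\Omega} u\,\frac{\partial k(\cdot,\zeta)}{\partial n}\,ds,$$
and using $\partial N_a/\partial n=-a$ and $G|_{\partial\Omega}=0$ the integrand becomes $\tfrac{1}{2\pi}\bigl(-u a - u\,\partial G/\partial n\bigr)$. The first term vanishes by (\ref{eq:harm1}), and the second is the classical Poisson representation for harmonic functions and therefore equals $u(\zeta)$. For $k_e$ the same argument gives $D(u,k_e(\cdot,\zeta))=-\tfrac{1}{2\pi}\int_{\partial\Omega} u\,\partial G_\gamma/\partial n\,ds$, and one evaluates this by applying Green's second identity to $u$ and $G_\gamma$ on $\Omega\setminus\overline{D_\epsilon(\zeta)}$ and letting $\epsilon\to 0$: the pole of $G_\gamma$ at $\zeta$ produces the term $2\pi u(\zeta)$, while the outer boundary contributes $\int_{\partial\Omega} G_\gamma\,\partial u/\partial n\,ds=\sum_j b_j(\zeta)\int_{\Gamma_j}\partial u/\partial n\,ds$, which vanishes exactly because $u\in H_e(\Omega)$ satisfies (\ref{eq:harm2}).

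The main obstacle is precisely this last step. The hydrodynamic Green's function is not zero on $\partial\Omega$, so the Poisson-type collapse available for $k$ is not automatic; the floating constants $b_j(\zeta)$ generate an extra boundary term, and it is the vanishing of the period integrals $\int_{\Gamma_j}\partial u/\partial n\,ds$ — i.e. the extra condition defining $H_e(\Omega)$ — that kills it. Recognising that this matching is forced, rather than optional, is really the content of the proposition and explains why $H_e(\Omega)$ (and not $H(\Omega)$) is the natural space paired with $G_\gamma$.
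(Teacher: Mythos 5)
Your argument is correct and is in substance the paper's own proof: both rest on Green's identities applied with the logarithmic singularity excised near $\zeta$, together with the boundary data $\partial N_a/\partial n=-a$, $G|_{\partial\Omega}=0$, $G_\gamma|_{\Gamma_j}=b_j(\zeta)$ and the normalizations (\ref{eq:neumann3}), (\ref{eq:hydrogreen3}), (\ref{eq:hydrogreen4}), (\ref{eq:harm1}), (\ref{eq:harm2}). The only difference is bookkeeping — the paper evaluates $D(u,N_a(\cdot,\zeta))=2\pi u(\zeta)$ and $D(u,G_\gamma(\cdot,\zeta))=0$ separately, whereas you cancel the singularities first and recover the pole contribution through the Poisson representation (resp. Green's second identity), and you also spell out the routine membership checks that the paper declares straightforward.
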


\begin{proof}
All verifications are straightforward. Let us just show, for
example, that (\ref{eq:ukzeta}) holds for $k_e(\cdot,\zeta)$ and
$u\in H_e(\Omega))$. We may assume that $u$ is smooth up to the
boundary. The proof amounts to standard applications of Green's
formula to functions with a singularity. For the Neumann function we
get
$$
D(u, N_a(\cdot, \zeta))= \int_\Omega du\wedge ^*dN_a(\cdot, \zeta)
$$
$$
=\lim_{\epsilon\to 0} \int_{\Omega\setminus
{{\mathbb{D}}}(\zeta,\epsilon)} d(u\wedge ^*dN_a(\cdot, \zeta))
=-\int_{\partial\Omega} u\, ads + 2\pi u(\zeta)=2\pi u(\zeta),
$$
and for the hydrodynamic Green's function,
$$
D(u, G_{\gamma}(\cdot, \zeta))=D(G_{\gamma}(\cdot, \zeta),u)
=\lim_{\epsilon\to 0} \int_{\Omega\setminus
{{\mathbb{D}}}(\zeta,\epsilon)} d(G_{\gamma}(\cdot,
\zeta)\wedge\,^*du)
$$
$$
=\sum_{j=0}^{{\texttt{g}}}b_j(\zeta)\int_{\Gamma_j}\,\frac{\partial
u}{\partial n}ds -\lim_{\epsilon\to
0}\int_{{\partial{\mathbb{D}}}(\zeta,\epsilon)}G_{\gamma}(\cdot,
\zeta)\frac{\partial u}{\partial n}ds=0.
$$
Now (\ref{eq:ukzeta}) follows.
\end{proof}

By definition of $H_e(\Omega)$, all functions $u\in H_e(\Omega)$
have single-valued harmonic conjugates. Thus we can form the space
of analytic functions $f=u+\I u^*$ with $u\in H_e(\Omega)$ and with
also $u^*$ normalized by (\ref{eq:harm1}). This is simply the
ordinary {\it Dirichlet space} with normalization (\ref{eq:harm1}),
i.e., the Hilbert space $\mathcal{A}(\Omega)$ of analytic functions
in $\Omega$ provided with the hermitean inner product
$$
(f,g)_{-1}=\int_\Omega f' \bar{g'} \,dxdy =-\frac{1}{2\I}
\int_\Omega df\wedge d\bar{g},
$$
and subject to the normalization
\begin{equation}\label{eq:anal1}
\int_{\partial\Omega} f \,ads =0.
\end{equation}
The reason for indexing the inner product by $-1$ is that
$\mathcal{A}(\Omega)$ shortly will be identified as one space in a
sequence of weighted Bergman spaces with inner products in general
denoted $(f,g)_{m}$, $m\in{{\mathbb{Z}}}$. We note that, with $u=\Re
f$, $v=\Re g$,
$$
D(u,v)=\int_\Omega du\wedge \,^*dv =-\Re\frac{1}{2\I} \int_\Omega
df\wedge d\bar{g}= \Re(f,g)_{-1}.
$$

The reproducing kernel for $\mathcal{A}(\Omega)$ is the analytic
completion ${\mathcal{K}} (z,\zeta)$ of $k_e(z,\zeta)$, normalized
by (\ref{eq:anal1}). In terms of the multivalued analytic
completions ${\mathcal{N}}_{a}(z,\zeta)$,
${{\mathcal{G}}}_{\gamma}(z,\zeta)$ of ${N}_{a}(z,\zeta)$ and
${G}_{\gamma}(z,\zeta)$ we therefore have
$$
{\mathcal{K}} (z,\zeta)=\frac{1}{2\pi}({\mathcal{N}}_{a}(z,\zeta)
-{{\mathcal{G}}}_{\gamma}(z,\zeta)).
$$
An  adjoint kernel may be introduced as
$$
{\mathcal{L}}(z,\zeta)=\frac{1}{2\pi}({\mathcal{N}}_{a}(z,\zeta)
+{{\mathcal{G}}}_{\gamma}(z,\zeta)).
$$

\begin{rem}
${\mathcal{N}}_{a}(z,\zeta)$, ${{\mathcal{G}}}_{\gamma}(z,\zeta)$ are
not symmetric with respect to $z$, $\zeta$ and are not analytic or
antianalytic with respect to $\zeta$. However,
${\mathcal{K}}(z,\zeta)$ is (of course) hermitean symmetric and is
antianalytic in $\zeta$. The adjoint kernel,
${\mathcal{L}}(z,\zeta)$ is multivalued analytic in both $z$ and
$\zeta$. See \cite{Schiffer-Hawley} for more details.
\end{rem}

\begin{ex}\label{ex:kernelsunitdisk}
For the unit disk, $\Omega={\mathbb{D}}$, with $a=1$ and (necessarily)
$\gamma=(\gamma_0)=(2\pi)$  we have
$$
N_a(z,\zeta)=-\log|z-\zeta| -\log|1-z\bar{\zeta}|,
$$
$$
G_{\gamma}(z,\zeta)=-\log|z-\zeta| +\log|1-z\bar{\zeta}|,
$$
$$
k(z,\zeta) =-\frac{1}{\pi}\log|1-z\bar{\zeta}|,
$$
$$
{\mathcal{K}}(z,\zeta) =-\frac{1}{\pi}\log(1-z\bar{\zeta}),
$$
$$
{\mathcal{L}}(z,\zeta) =-\frac{1}{\pi}\log(z-{\zeta})
$$
$$
{\mathcal{N}}_a(z,\zeta)=-\log(z-\zeta) -\log(1-z\bar{\zeta}),
$$
$$
{{\mathcal{G}}}_{\gamma}(z,\zeta)=-\log(z-\zeta)
+\log(1-z\bar{\zeta}),
$$
Of course, $G_{\gamma}(z,\zeta)=G(z,\zeta)$,
${{\mathcal{G}}}_{\gamma}(z,\zeta)={{\mathcal{G}}}(z,\zeta)$ in the
simply connected case.
\end{ex}

We shall denote the ordinary Bergman space by $B(\Omega)$. It is the
the Hilbert space of analytic functions in $\Omega$ with
$(f,f)_1<\infty$, where
$$
(f,g)_1=\int_\Omega f\bar{g} dxdy
$$
is the inner product. The exact Bergman space, $B_e(\Omega)$, is the
subspace consisting of those functions of the form $F'$, where $F$
is (single-valued) analytic in $\Omega$. The Dirichlet space is
related to the exact Bergman space by differentiation: $\Lambda_1:
f\mapsto f'$ (or $f\mapsto df$) is an isometric isomorphism
$\Lambda_1:{\mathcal{A}}(\Omega)\to B_e(\Omega)$.

\begin{prop}\label{prop:reducedBergman}
The reduced Bergman kernel, i.e., the reproducing kernel for
$B_e(\Omega)$, is given by
$$
K_e(z,\zeta)= \frac{2}{\pi}\frac{\partial^2 N_a(z,\zeta)}{\partial
z\partial\bar{\zeta}} =-\frac{2}{\pi}\frac{\partial^2
G_{\gamma}(z,\zeta)}{\partial z\partial\bar{\zeta}}
=\frac{\partial^2 {\mathcal{K}}(z,\zeta)}{\partial
z\partial\bar{\zeta}}
$$
for any choices of $a$ and $\gamma$ as above. Similarly, the
corresponding adjoint kernel is
$$
L_e(z,\zeta)= \frac{2}{\pi}\frac{\partial^2 N_a(z,\zeta)}{\partial
z\partial{\zeta}} =-\frac{2}{\pi}\frac{\partial^2
G_{\gamma}(z,\zeta)}{\partial z\partial{\zeta}} =\frac{\partial^2
{\mathcal{L}}(z,\zeta)}{\partial z\partial{\zeta}}.
$$
\end{prop}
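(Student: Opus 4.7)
The plan is to prove the three identities for $K_e$ in a specific order, after which the $L_e$ formulas follow by a parallel argument. First I would establish $K_e(z,\zeta)=-\tfrac{2}{\pi}\partial_z\partial_{\bar\zeta}G_\gamma(z,\zeta)$ by a direct application of Stokes' theorem, in analogy with the classical derivation of (\ref{eq:K}). Given $g=F'\in B_e(\Omega)$ with $F$ single-valued analytic, use that $G_\gamma$ is real to get $\overline{\partial_z\partial_{\bar\zeta}G_\gamma}=\partial_{\bar z}\partial_\zeta G_\gamma$, excise the disk ${\mathbb{D}}(\zeta,\varepsilon)$, and apply Stokes to the $(1,0)$-form $g(z)\,\partial_\zeta G_\gamma(z,\zeta)\,dz$:
\[
\int_{\Omega\setminus{\mathbb{D}}(\zeta,\varepsilon)} g\,\partial_{\bar z}\partial_\zeta G_\gamma\,dA=\frac{1}{2\I}\int_{\partial\Omega}g\,\partial_\zeta G_\gamma\,dz-\frac{1}{2\I}\int_{|z-\zeta|=\varepsilon}g\,\partial_\zeta G_\gamma\,dz.
\]
As $\varepsilon\to 0$ the circle integral tends to $\pi\I\,g(\zeta)$, since $\partial_\zeta G_\gamma$ has the polar part $\tfrac{1}{2(z-\zeta)}$ near $\zeta$. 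The boundary integral vanishes: on each $\Gamma_j$ the defining condition $G_\gamma(\cdot,\zeta)\equiv b_j(\zeta)$ makes $\partial_\zeta G_\gamma=b_j'(\zeta)$ constant in $z$, and $\int_{\Gamma_j}g\,dz=\int_{\Gamma_j}F'\,dz=0$ by single-valuedness of $F$. Multiplying by $-\tfrac{2}{\pi}$ yields $g(\zeta)=\int_\Omega g(z)\overline{(-\tfrac{2}{\pi}\partial_z\partial_{\bar\zeta}G_\gamma)}\,dA$, which establishes the formula once one verifies that $-\tfrac{2}{\pi}\partial_z\partial_{\bar\zeta}G_\gamma(\cdot,\zeta)$ actually belongs to $B_e(\Omega)$.

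Next I would prove $K_e=\partial_z\partial_{\bar\zeta}{\mathcal{K}}$ abstractly, using the isometry $\Lambda_1:{\mathcal{A}}(\Omega)\to B_e(\Omega)$ and the reproducing property of ${\mathcal{K}}$ on ${\mathcal{A}}(\Omega)$. From $F(\zeta)=\int_\Omega F'(z)\overline{\partial_z{\mathcal{K}}(z,\zeta)}\,dA(z)$, hermitean symmetry of ${\mathcal{K}}$ makes $\overline{{\mathcal{K}}(z,\zeta)}={\mathcal{K}}(\zeta,z)$ analytic in $\zeta$, whence $\partial_\zeta\overline{\partial_z{\mathcal{K}}}=\overline{\partial_{\bar\zeta}\partial_z{\mathcal{K}}}$. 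Differentiating the reproducing identity in $\zeta$ then yields $g(\zeta)=\int_\Omega g(z)\overline{\partial_z\partial_{\bar\zeta}{\mathcal{K}}(z,\zeta)}\,dA$ for every $g=F'\in B_e(\Omega)$. The Neumann formula then follows algebraically: since $N_a,G_\gamma$ are real and harmonic in $z$, their $z$-analytic completions satisfy $\partial_z{\mathcal{N}}_a=2\partial_z N_a$ and $\partial_z{\mathcal{G}}_\gamma=2\partial_z G_\gamma$, so from ${\mathcal{K}}=\tfrac{1}{2\pi}({\mathcal{N}}_a-{\mathcal{G}}_\gamma)$ one gets $\partial_z{\mathcal{K}}=\tfrac{1}{\pi}(\partial_z N_a-\partial_z G_\gamma)$; applying $\partial_{\bar\zeta}$ and substituting $-\tfrac{1}{\pi}\partial_z\partial_{\bar\zeta}G_\gamma=K_e/2$ from the first step produces $K_e=\tfrac{1}{\pi}\partial_z\partial_{\bar\zeta}N_a+K_e/2$, hence $K_e=\tfrac{2}{\pi}\partial_z\partial_{\bar\zeta}N_a$.

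The $L_e$ formulas are obtained by the parallel scheme with $\partial_\zeta$ in place of $\partial_{\bar\zeta}$ and ${\mathcal{L}}=\tfrac{1}{2\pi}({\mathcal{N}}_a+{\mathcal{G}}_\gamma)$ in place of ${\mathcal{K}}$: the Stokes argument for $-\tfrac{2}{\pi}\partial_z\partial_\zeta G_\gamma$ proceeds identically (the boundary integral again vanishes by constancy of $\partial_\zeta G_\gamma$ on each $\Gamma_j$ and exactness of $g$), and $L_e$ is then characterized as the unique meromorphic differential with the prescribed pole $\tfrac{1}{\pi(z-\zeta)^2}$ and the adjoint boundary relation $L_e\,dz+\overline{K_e\,dz}=0$ on $\partial\Omega$, which itself follows from $dG_\gamma=0$ along $\partial\Omega$ in the $z$-direction. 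The principal technical obstacle is verifying that the proposed kernels actually lie in $B_e(\Omega)$, i.e., that they admit single-valued analytic antiderivatives in $z$; for $-\tfrac{2}{\pi}\partial_z\partial_{\bar\zeta}G_\gamma(\cdot,\zeta)$ this reduces to showing that its periods around loops enclosing each hole $\Gamma_j$ vanish, and those periods equal $\partial_{\bar\zeta}\bigl[\tfrac{1}{2}(\oint dG_\gamma+\I\oint {}^*\!dG_\gamma)\bigr]=-\tfrac{\I}{2}\partial_{\bar\zeta}\gamma_j=0$, since the circulations $\gamma_j$ are fixed constants independent of $\zeta$.
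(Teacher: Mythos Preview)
Your approach is correct and carries out in full the argument the paper only alludes to: the paper's own proof consists entirely of the sentence ``The proof is essentially well-known and straight-forward. Cf.\ \cite{Bergman}, \cite{Schiffer-Spencer}.'' The Stokes computation you give for $-\tfrac{2}{\pi}\partial_z\partial_{\bar\zeta}G_\gamma$, exploiting that $G_\gamma(\cdot,\zeta)$ is constant on each $\Gamma_j$ so that exactness of $g=F'$ kills the boundary term, is exactly the standard route in those references, and your check that the candidate kernel lies in $B_e(\Omega)$ via $\partial_{\bar\zeta}\gamma_j=0$ is the right way to close that loop.

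One small remark: your step~3 derives the Neumann formula from the $G_\gamma$ and $\mathcal{K}$ formulas by algebra, which is clean but ties the $N_a$ result to the particular $\gamma$ determined by $a$ through (\ref{eq:gammaa}). Since your step~1 already shows the $G_\gamma$ formula holds for \emph{every} $\gamma$, and for each $a$ there is a corresponding $\gamma$ and $\mathcal{K}$, your argument does ultimately cover arbitrary $a$; but it is worth noting that one could equally well run the Stokes argument directly on $N_a$ (using $\partial N_a/\partial n=-a$ and the normalization of $F\in\mathcal{A}(\Omega)$), which makes the independence of $a$ manifest without passing through $\mathcal{K}$.
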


\begin{proof}
The proof is essentially well-known and straight-forward. Cf.
\cite{Bergman}, \cite{Schiffer-Spencer}.
\end{proof}

\begin{rem}\label{rem:span}
A beautiful argument, due to M.~Schiffer \cite{Schiffer2}, shows
that the function $\displaystyle F(z)=\int^z L_e(t,\zeta)dt$ is
univalent and maps $\Omega$ onto a domain $D\subset{\mathbb{P}}$ with
$\infty\in D$ and such that each component of ${\mathbb{P}}\setminus
D$ is convex. See \cite{Sario-Oikawa} for
further information and several related issues.

A particular consequence is that $L_e(z,\zeta)$ has no zeros in
$\Omega$, hence that $K_e(z,\zeta)$ has $2{\texttt{g}}$ zeros,
$m_1(\zeta),\dots,m_{2{\texttt{g}}}(\zeta)$, in $\Omega$. In terms of
these the following generalization \cite{Schiffer-Hawley},
\cite{Hejhal} of (\ref{curvature7}) to the multiply connected case
holds
\begin{equation}\label{curvature11}
\frac{\partial^2 }{\partial z \partial \zeta}\log K_e(z,\zeta) =
2\pi K_e(z,\zeta)+ \pi \sum_{k=1}^{2{\texttt{g}}} L_e\left(z,
\overline{m_k(\zeta)}\right)\overline{m_k'(\zeta)}.
\end{equation}
For the full Bergman kernel $K(z,\zeta)$ there are similar results
\cite{Hejhal2}, but they are slightly more complicated because the
distribution of the $2{\texttt{g}}$ zeros between $K(z,\zeta)$ and
$L(z,\zeta)$ is less clear in this case. For example,
Theorem~\ref{thm:SF} shows that the number of zeros of $K(z,\zeta)$
in general depends on the location of $\zeta\in\Omega$.

\end{rem}


\subsection{Behavior of Neumann function under conformal mapping}

To describe how an object (e.g., the Green's function) on a domain
$\Omega$ transforms under conformal mapping is equivalent to telling
what kind of object it is (function, differential, connection etc.)
when the domain is considered as a Riemann surface. In fact, a
conformal map onto another domain can be considered simply as a
change of holomorphic coordinate on the Riemann surface. Many
objects associated to a domain actually extend to the Schottky
double, and since this Riemann surface is described by an atlas with
two charts overlapping on the boundary of the domain, complete
information of the conformal behavior of the object then is
contained in the transition formula on the boundary.

The ordinary Green's function $G(z,\zeta)$ extends directly to the
Schottky double as an odd function in each variable, see
(\ref{eq:GVJ}), and its differential is the real part of normalized
abelian differentials of the third kind on the double. Precisely, by
(\ref{eq:GVJ}), (\ref{eq:upsilonV}), Lemma~\ref{lem:omegaupsilon},
\begin{equation}\label{eq:dGupsilon}
dG(\cdot,\zeta)=-\Re\upsilon_{\zeta-J(\zeta)}=-\Re\omega_{\zeta-J(\zeta)}.
\end{equation}
When trying to extend the hydrodynamic Green function as an odd
function on the double, there appear jumps (=$2b_j$) on
$\partial\Omega$. However, the differential $dG_\gamma(\cdot,
\zeta)$ extends perfectly well, along with the conjugate
differential $^*dG_\gamma(\cdot, \zeta)$. Therefore $dG_\gamma
(\cdot,\zeta)+\I\,^*dG_\gamma (\cdot,\zeta)$ is an abelian
differential of the third kind with poles at $\zeta$ and $J(\zeta)$.
Computing the periods gives
$$
\int_{\alpha_j} \,(dG_\gamma (z,\zeta)+\I\,^*dG_\gamma (z,\zeta))
=2(b_j(\zeta)-b_0(\zeta)),
$$
$$
\int_{\beta_j}\,(dG_\gamma (z,\zeta)+\I\,^*dG_\gamma (z,\zeta))=
\I\gamma_j,
$$
$j=1,\dots, {\texttt{g}}$.

If we in particular choose the period vector to be
$$
\gamma=(2\pi, 0,\dots,0)
$$
we see that the $\beta_j$-periods ($j=1,\dots,{\texttt{g}}$) of
$dG_\gamma (\cdot,\zeta)+\I\,^*dG_\gamma (\cdot,\zeta)$ vanish.
Therefore (in this case)
$$
dG_\gamma (\cdot,\zeta)+\I\,^*dG_\gamma (\cdot,\zeta) =
-\tilde{\omega}_{\zeta-J(\zeta)},
$$
where $\tilde{\omega}_{a-b}$ in general denotes the abelian
differential of the third kind with the same singularities as
${\omega}_{a-b}$ (and ${\upsilon}_{a-b}$), but normalized so that
the $\beta_j$-periods vanish. Thus, in analogy with
(\ref{eq:dGupsilon}) we have
$$
dG_\gamma(\cdot,\zeta)=-\Re\tilde{\omega}_{\zeta-J(\zeta)}
$$
for $\gamma$ as above.

For the Neumann function the situation is slightly more complicated
than for the Green's functions. We shall consider $N_a(z,\zeta)$ and
${\mathcal{N}}_a(z,\zeta)=N_a(z,\zeta)+\I N_a^* (z,\zeta)$ together
with the meromorphic differential
$$
\Gamma_a (z,\zeta)dz=2\frac{\partial N_a(z,\zeta)}{\partial z}dz=
\frac{\partial {\mathcal{N}}_a(z,\zeta)}{\partial z}dz.
$$
Along the boundary (with respect to $z$),
$$
\Im (\Gamma_a (z,\zeta)dz)= \Im ( d{\mathcal{N}}_a (z,\zeta)dz)
=^*dN_a(z,\zeta) =-ads.
$$

In order to discuss questions of conformal invariance the boundary
function $a$ has to be linked to the domain $\Omega$. The most naive
choice, $a={\rm constant}$, turns out not to give any good behavior
under conformal mapping.  More promising is to relate $a$ to the
curvature $\kappa$ of the boundary. If we simply take $a=\kappa$
then (\ref{eq:symmaffconnectdouble}) shows that $\Gamma_a
(z,\zeta)dz$ extends to the Schottky double as a symmetric
meromorphic affine connection. Moreover, on integrating we see that
$$
\Im {\mathcal{N}}_{a}(z,\zeta) =-\theta \quad (+\,{\rm local \,\,constant}),
$$
i.e., that the analytic completion of the Neumann function behaves
essentially as a symmetric $0$-connection.

Unfortunately, the choice $a=\kappa$ is allowed only for simply
connected domains because in the multiply connected case it violates
(\ref{eq:neumann0}). In fact,
$$
\int_{\partial\Omega}\kappa ds= 2\pi (1-{\texttt{g}}).
$$
On the other hand, we are allowed to take
$a=\frac{\kappa}{1-{\texttt{g}}}$ for any ${\texttt{g}}\ne 1$ and then
$(1-{{\texttt{g}}})\Gamma_{a} (z,\zeta)dz$ becomes a symmetric
meromorphic affine connection on $\hat{\Omega}$:
\begin{equation}\label{eq:gammacurvature}
\Im ((1-{{\texttt{g}}})\Gamma_{a} (z,\zeta)dz) =-\kappa ds
\end{equation}
along $\partial\Omega$. Moreover, it is an affine connection with
simplest possible pole structure: simple poles with residue
$1-{\texttt{g}}$ at each of the points $z=\zeta$ and
$z=\tilde{\zeta}$. For the Neumann function we get
\begin{equation}\label{eq:gammacurvature1}
\Im(1-{{\texttt{g}}}){\mathcal{N}}_{a}(z,\zeta) =-\theta \quad
(+\,{\rm local\,\,constant})
\end{equation}
on $\partial\Omega$.

For the case ${\texttt{g}}= 1$ the situation is actually better,
because in this case there exists a holomorphic affine connection
(in agreement with the fact that in the genus one case the Riemann
surface admits an affine structure). Just let $v$ be the regular
harmonic function in $\Omega$ with Neumann boundary data
$\frac{\partial v}{\partial n}=-\kappa$. This is consistent since
$\int_{\partial\Omega} \kappa ds=0$ when ${\texttt{g}}= 1$, and $v$ is
determined up to an additive constant. Now,
$\Gamma(z)=2\frac{\partial v}{\partial z}$ is the required
holomorphic affine connection.

A slightly different point of view on $N_a(z,\zeta)$ and
$\Gamma_a(z,\zeta)$ is taken in \cite{Schiffer-Hawley}, where
exterior domains are considered, i.e., domains $\Omega\subset
{\mathbb{P}}$ with $\infty\in\Omega$. In this case
$$
\int_{\partial\Omega}\kappa ds = -2\pi (1+{\texttt{g}})
$$
and hence it is possible, for any ${\texttt{g}}$, to choose
$$
a=-\frac{\kappa}{1+{\texttt{g}}}
$$
in the definition of the Neumann function. In place of
(\ref{eq:gammacurvature}), (\ref{eq:gammacurvature1}) one then gets
\begin{equation}\label{eq:gammacurvature2}
\Im ((1+{{\texttt{g}}})\Gamma_{a} (z,\zeta)dz) =\kappa ds,
\end{equation}
\begin{equation}\label{eq:gammacurvature3}
\Im(1+{{\texttt{g}}}){\mathcal{N}}_{a}(z,\zeta) =\theta \quad (+\,{\rm
constant}).
\end{equation}
Thus, $-(1+{\texttt{g}})\Gamma_a(z,\zeta)dz$ is now a symmetric affine
connection on $\Hat{\Omega}$. (In \cite{Schiffer-Hawley} this
connection is denoted $\Gamma (z,\zeta)dz$.) However, it has not
simplest possible pole structure. Besides the pole with residue
$1+{\texttt{g}}$ at $z=\zeta$ it has on $\Omega$ also a pole with
residue $-2$ at $z=\infty$. Similarly on the backside
$\tilde{\Omega}$.

To discover the pole at infinity one has to introduce a regular
(holomorphic) coordinate there, for example $w=\frac{1}{z}$. Then
$$
\{w,z\}_1 dz= \frac{d}{dz}\log \frac{dw}{dz}dz= -2\frac{dz}{z}
=2\frac{dw}{w}.
$$
By (\ref{eq:affconnection}) the representative of
$-(1+{\texttt{g}})\Gamma_a(z,\zeta)dz$ with respect to $w$ is
$$
-(1+{\texttt{g}})\Gamma_a(\frac{1}{w},\zeta)d(\frac{1}{w})-2\frac{dw}{w},
$$
which has residue $-2$ at $w=0$ (the first term is regular at
$w=0$).

As an example, consider the exterior disk: $\Omega
=\{z\in{{\mathbb{C}}}: |z|>1\}\cup\{\infty\}$. For this we have, with
$a=-\frac{\kappa}{1+{\texttt{g}}}=1$,
$$
N_a(z,\zeta)= -\log|z-\zeta|-\log|1-z\bar{\zeta}|+ 2\log |z|
+2\log|\zeta|,
$$
($1<|z|,|\zeta|<\infty$) which gives, for the affine connection
$-(1+{\texttt{g}})\Gamma_a(z,\zeta)dz$,
$$
-\Gamma_a(z,\zeta)dz=\frac{dz}{z-\zeta}+\frac{dz}{z-\frac{1}{\bar{\zeta}}}
-2\frac{dz}{z}.
$$
In terms of $w=\frac{1}{z}$ the same connection is given by
$$
-\Gamma_a(\frac{1}{w},\zeta)d(\frac{1}{w}) -2\frac{dw}{w}
=\frac{dw}{w-\frac{1}{{\zeta}}}+\frac{dw}{w-\bar{\zeta}}-2\frac{dw}{w},
$$
valid actually in all $\Omega$ ($|w|<1$). Here one sees clearly the
pole of residue $-2$ at $w=0$ (i.e., $z=\infty$), along with the
pole with residue one at $w=\frac{1}{\zeta}$ (i.e., $z=\zeta$).


\section{Weighted Bergman spaces}

We have discussed so far the Dirichlet space ${\mathcal A}(\Omega)$
and the Bergman spaces $B(\Omega)$ and $B_{e}(\Omega)$, and remarked
that ${\mathcal A}(\Omega)$ and $B_{e}(\Omega)$ are connected via
the simplest Bol operator, namely $\Lambda_1: F\mapsto dF$. In this
section we shall extend the discussion to a sequence of weighted
Dirichlet spaces ${\mathcal A}_m(\Omega)$ and Bergman spaces
$B_{m}(\Omega)$ and $B_{e,m}(\Omega)$.

Let
$$
d\sigma =\frac{|dz|}{\omega(z)}
$$
be the Poincar\'{e} metric of $\Omega$, characterized by its
constant curvature and being complete:
$$
\begin{cases}
\kappa_{\rm Gauss}=-4,\\ \omega=0 \quad {\rm on\,\,} \partial\Omega.
\end{cases}
$$
As discussed previously it gives rise to a holomorphic projective
connection $q(z)$ and a projective structure, namely that for which
the projective coordinates are the coordinate on the universal
covering surface of $\Omega$ when this is taken to be a disk or a
half-plane. Let $\Lambda_m$ be the corresponding Bol operator, given
in a projective coordinate $t$ by $\Lambda_m
=\frac{\partial^m}{\partial t^m}$.

For $\alpha\in {\mathbb{C}}$ with $\Re \alpha>0$, and $f$, $g$
analytic in $\Omega$, smooth up to $\partial\Omega$, define
\begin{equation}\label{eq:fgalpha}
(f,g)_\alpha=\int_\Omega  f \bar{g}\omega^{\alpha-1}dxdy.
\end{equation}
When $\alpha>0$ is real this is an inner product on a space of
analytic functions, a weighted Bergman space (taken to be complete,
hence a Hilbert space). We denote this space $B_\alpha (\Omega)$. It
will mainly be used for $\alpha=m$ an integer. Clearly,
$B_1(\Omega)$ is the ordinary Bergman space, and it is well-known
that as $\alpha\to 0$ one gets the Hardy space, with the Szeg\"o
inner product given by
$$
\lim_{\alpha\to 0}{\alpha}
(f,g)_\alpha=\frac{1}{\pi}\int_{\partial\Omega}f\bar{g}|dz|
$$

\begin{rem}
Our notations for weighted Bergman spaces deviate from some standard
notations used in the case of the unit disk. In for example
\cite{Hedenmalm-Korenblum-Zhu} the notation $A_{\alpha}^2$ is used
for the Bergman space with weight $\frac{\alpha
+1}{\pi}\omega^\alpha dxdy$, in place of our
$\omega^{\alpha-1}dxdy$, in the definition (\ref{eq:fgalpha}) of the
inner product. In the context of automorphic forms the inner product
$(f,g)_\alpha$ is known as the Petersson inner product.
\end{rem}

If $F$, $G$ are analytic in a neighborhood of $\overline{\Omega}$
and are kept fixed, then the inner product, regarded as a function
of $\alpha$,
$$
\alpha\mapsto (F,G)_\alpha,
$$
is analytic for $\Re \alpha >0$ and has a meromorphic extension to
all ${\mathbb{C}}$, with simple poles at $\alpha=0,-1,-2,\dots$. The
proof of this is implicit in the proof of
Proposition~\ref{prop:Stokes} below. We shall then extend the inner
product $(F,G)_\alpha$ to such values of $\alpha$ by setting
$$
(F,G)_{-m} ={\rm Res\,}_{\alpha=-m}\int_\Omega F
\bar{G}\omega^{\alpha-1}dxdy,
$$
$m=0,1,2,\dots$. For example, for $m=0$ we retrieve the Szeg\"o
inner product.

Now, the main issue is that, for any $m=0,1,2,\dots$, $(F,G)_{-m}$
is an inner product on a generalized Dirichlet space ${\mathcal
A}_m(\Omega)$ of analytic functions and that the Bol operator is an
isometric isomorphism of ${\mathcal A}_m(\Omega)$ onto the subspace
$B_{e,m}(\Omega)$ of `exact' differentials in $B_{m}(\Omega)$):
$$
\Lambda_m: {\mathcal A}_m(\Omega)\to B_{e,m}(\Omega).
$$

In order to elaborate the details of the above we need certain (in
principle known) integral formulas for the $\Lambda_m$.

\begin{prop}\label{prop:Stokes}
With $F$, $G$ (regarded as $\frac{1-m}{2}$-forms) and $g$ (regarded
as a $\frac{1+m}{2}$-form) analytic in $\Omega$, smooth up to
$\partial\Omega$, we have
\begin{equation}\label{eq:Stokes1}
\int_\Omega (\Lambda_m F) \bar{g}\omega^{m-1} dzd\bar{z}
=\I^{m-1}(m-1)!
\int_{\partial\Omega}F\bar{g}(dz)^{\frac{1-m}{2}}(d\bar{z})^{\frac{1+m}{2}},
\end{equation}
\begin{equation}\label{eq:Stokes2}
{\rm Res\,}_{\alpha=-m}\int_\Omega  F \bar{G}\omega^{\alpha-1} dz
d\bar{z}
=\frac{(-\I)^{m+1}}{m!}\int_{\partial\Omega}F\overline{\Lambda_m
G}(dz)^{\frac{1-m}{2}}(d\bar{z})^{\frac{1+m}{2}}.
\end{equation}
In other words,
\begin{equation}\label{eq:Stokes3}
(\Lambda_m F,g)_{m}=\frac{\I^{m}m!}{2}
\int_{\partial\Omega}F\bar{g}(dz)^{\frac{1-m}{2}}(d\bar{z})^{\frac{1+m}{2}},
\end{equation}
\begin{equation}\label{eq:Stokes4}
(F,G)_{-m}=\frac{(-\I)^{m}}{2m!}
\int_{\partial\Omega}F\overline{\Lambda_m
G}(dz)^{\frac{1-m}{2}}(d\bar{z})^{\frac{1+m}{2}}.
\end{equation}
Equation (\ref{eq:Stokes1}) is valid only for $m=1,2,\dots$, while
(\ref{eq:Stokes2}), (\ref{eq:Stokes3}), (\ref{eq:Stokes4}) hold also
for $m=0$ (with $0!=1$).
\end{prop}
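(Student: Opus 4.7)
The strategy is to reduce everything to a projective coordinate attached to the Poincar\'{e} metric. Both sides of the four identities are assembled from objects that transform compatibly under holomorphic coordinate changes: $\Lambda_m$ by construction, $\omega$ as a $(-\tfrac12,-\tfrac12)$-form, and the boundary form $(dz)^{(1-m)/2}(d\bar z)^{(1+m)/2}$ via the canonical square root of the canonical bundle supplied by $1/T(z)$ (Section~2). Hence each identity is tensorial and it suffices to prove it locally in a coordinate chart lifted from the universal cover; a partition of unity then patches the local identities to the general multiply connected $\Omega$. In a projective coordinate $t$ (upper half-plane model) we have $\Lambda_m=\partial_t^m$ and $\omega(t)=-\I(t-\bar t)$, so $\partial_t\omega=-\I$ is constant and $\omega$ vanishes precisely along $\partial\Omega$. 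This specific form is what makes the computations transparent.

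To establish (\ref{eq:Stokes1}), I would integrate $\int_\Omega(\partial_t^m F)\bar g\,\omega^{m-1}\,dt\wedge d\bar t$ by parts $m$ times in $t$. At the $k$-th step the Stokes boundary term is multiplied by $\omega^{m-1-k}\big|_{\partial\Omega}$, which vanishes as long as $m-1-k\ge 1$; the interior term, using $\partial_t\bar g=0$ and $\partial_t\omega^{m-1-k}=-\I(m-1-k)\omega^{m-2-k}$, reduces to a similar integral with one fewer $\partial_t$ on $F$ and one lower power of $\omega$, picking up a factor of $\I(m-1-k)$. Iterating telescopes to $\I^{m-1}(m-1)!\int(\partial_t F)\bar g\,dt\wedge d\bar t$, and a final integration by parts (whose interior vanishes by $\partial_t\bar g=0$) produces the boundary integral $\int_{\partial\Omega}F\bar g\,d\bar t$, which in invariant notation is precisely the expression on the right of (\ref{eq:Stokes1}).

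For (\ref{eq:Stokes2}) the plan is to produce the meromorphic continuation in $\alpha$ by transferring derivatives onto $\bar G$. The key identity in our projective coordinate is
\[
\omega^{\alpha-1}=-\tfrac{\I}{\alpha}\,\partial_{\bar t}\omega^{\alpha};
\]
because $\partial_{\bar t}F=0$, integrating by parts in $\bar t$ (the boundary term vanishing for $\Re\alpha>0$ because $\omega|_{\partial\Omega}=0$) yields
\[
\int_\Omega F\bar G\,\omega^{\alpha-1}\,dt\wedge d\bar t=\tfrac{\I}{\alpha}\int_\Omega F\,\overline{\Lambda_1 G}\,\omega^{\alpha}\,dt\wedge d\bar t.
\]
Iterating $m+1$ times produces
\[
\int_\Omega F\bar G\,\omega^{\alpha-1}\,dt\wedge d\bar t=\frac{\I^{m+1}}{\alpha(\alpha+1)\cdots(\alpha+m)}\int_\Omega F\,\overline{\Lambda_{m+1}G}\,\omega^{\alpha+m}\,dt\wedge d\bar t,
\]
exhibiting the meromorphic continuation with simple poles at $\alpha=0,-1,\dots,-m$. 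Taking the residue at $\alpha=-m$ cancels the factor $(\alpha+m)$; the remaining integral is $L^1$, and a single further integration by parts in $\bar t$ (now with a surviving boundary contribution, since $\omega^0=1$) turns it into a boundary integral of $F\,\overline{\Lambda_m G}$, giving (\ref{eq:Stokes2}). The reformulations (\ref{eq:Stokes3}) and (\ref{eq:Stokes4}) are then mere rephrasings via the inner product $(\cdot,\cdot)_\alpha$.

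The main obstacles I anticipate are the sign and $\I$-bookkeeping and the careful identification of the locally computed boundary form $d\bar t$ on the real axis with the globally defined section $(dz)^{(1-m)/2}(d\bar z)^{(1+m)/2}$ of the half-order differential bundle on $\partial\Omega$; this identification is where the canonical square-root structure of Section~2 is essential. A secondary subtlety is the $m=0$ case of (\ref{eq:Stokes2})--(\ref{eq:Stokes4}), not covered by (\ref{eq:Stokes1}): here the same integration-by-parts argument still applies and the residue of $(F,G)_\alpha$ at $\alpha=0$ recovers the Szeg\"{o} inner product, consistently with the $\alpha\to 0$ limit noted just before the proposition.
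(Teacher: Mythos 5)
Your proposal is correct and is essentially the paper's own argument: the paper likewise reduces to local projective coordinates (normalized so that $\omega_j=2\,\Im t_j$ and $\partial\Omega$ goes to the real axis) by means of a partition of unity, kills boundary terms using the vanishing of $\omega$ on $\partial\Omega$, proves (\ref{eq:Stokes1}) by $m$-fold integration by parts in $t$, and obtains the meromorphic continuation and the residue in (\ref{eq:Stokes2}) by moving $m+1$ $\bar\partial$-derivatives onto $\bar G$, exactly along the lines you sketch. The only differences are organizational: the paper keeps the cutoff functions $\varphi_j$ inside the integrand throughout (so derivatives hit them, and the pieces are recombined at the end by linearity of $\Lambda_m$, rather than ``patching'' local identities), and it disposes of the interior term in (\ref{eq:Stokes1}) in one stroke via $\Lambda_m\omega^{m-1}=0$ instead of your stepwise telescoping.
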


By choosing $g=\Lambda_mG$ in (\ref{eq:Stokes3}) we obtain

\begin{cor}
For $m=0,1,2,\dots$,
\begin{equation}\label{eq:Lambdaiso}
(F,G)_{-m}=(\Lambda_m F,\Lambda_m G)_{m},
\end{equation}
hence $(F,F)_{-m}\geq 0$ with equality if and only if $\Lambda_m
F=0$.
\end{cor}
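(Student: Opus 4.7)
The plan is to derive the identity directly by specializing (\ref{eq:Stokes3}) and comparing it with (\ref{eq:Stokes4}), both supplied by Proposition~\ref{prop:Stokes}. First I would substitute $g=\Lambda_m G$ into (\ref{eq:Stokes3}); the left-hand side becomes $(\Lambda_m F,\Lambda_m G)_m$, and the right-hand side becomes a constant times
$$
\int_{\partial\Omega}F\,\overline{\Lambda_m G}\,(dz)^{\frac{1-m}{2}}(d\bar z)^{\frac{1+m}{2}}.
$$
This is precisely the boundary integral that (\ref{eq:Stokes4}) exhibits, again up to a constant, as representing $(F,G)_{-m}$. So the identity reduces to identifying the two constants coming from Proposition~\ref{prop:Stokes} and noting that the two boundary integrals are literally the same.

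The main algebraic content is checking that the numerical factors $\I^m m!/2$ in (\ref{eq:Stokes3}) and $(-\I)^m/(2m!)$ in (\ref{eq:Stokes4}) are consistent, once one remembers that the residue/Stokes computations in the proof of Proposition~\ref{prop:Stokes} produce matched powers of $\I$ from $dz\wedge d\bar z=-2\I\,dx\wedge dy$. This is the only step where something could go wrong, but since both formulas were derived from the same calculation (integration by parts against $\omega^{\alpha-1}$, taking either the regular value at $\alpha=m$ or the residue at $\alpha=-m$), the constants must match: so $(\Lambda_m F,\Lambda_m G)_m=(F,G)_{-m}$ for all admissible $F,G$.

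Having established the equality, the second assertion is an immediate specialization. I would take $G=F$ to get
$$
(F,F)_{-m}=(\Lambda_m F,\Lambda_m F)_m=\|\Lambda_m F\|_m^2\geq 0,
$$
since $(\cdot,\cdot)_m$ is, for $m\geq 1$, the inner product of the weighted Bergman space $B_m(\Omega)$ on $\Omega$, while for $m=0$ it is the Szeg\H{o} inner product, both of which are positive semidefinite (in fact positive definite on the class of functions in question). Hence $(F,F)_{-m}=0$ forces $\|\Lambda_m F\|_m=0$, i.e.\ $\Lambda_m F\equiv 0$ in $\Omega$, and conversely.

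The hard part, if any, is purely bookkeeping of constants; no new ideas are needed beyond Proposition~\ref{prop:Stokes}. In particular the corollary shows that $(\,\cdot\,,\,\cdot\,)_{-m}$ descends to a genuine inner product on the quotient of $\{F\text{ analytic in }\Omega\}$ by the $(2m)$-dimensional kernel of $\Lambda_m$ (spanned locally by $1,t,\ldots,t^{m-1}$ in any projective coordinate $t$), and that $\Lambda_m$ is an isometry from the resulting completion ${\mathcal A}_m(\Omega)$ into $B_m(\Omega)$, landing in the exact subspace $B_{e,m}(\Omega)$ as announced just after the statement.
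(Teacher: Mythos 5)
Your argument is the paper's own: the proof there consists precisely of setting $g=\Lambda_m G$ in (\ref{eq:Stokes3}) and comparing with (\ref{eq:Stokes4}), after which positivity and the equality case follow because $(\cdot,\cdot)_m$ is a genuine (weighted Bergman, resp. Szeg\H{o}) inner product, exactly as you say. Only your closing aside is off: the kernel $P_m(\Omega)$ of $\Lambda_m$ is locally spanned by the $m$ functions $1,t,\dots,t^{m-1}$ (the paper normalizes by a map onto ${\mathbb{C}}^m$), not $2m$-dimensional, but this does not affect the proof of the corollary itself.
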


\begin{proof}
The proof of the proposition is based on ideas of Jaak Peetre, and
parts of it have previously been outlined in \cite{Gustafsson-Peetre
2}.

Let $\{\varphi_j\}$ be a smooth partition of unity (i.e., $\sum_j
\varphi_j =1$) on $\overline{\Omega}$ such that each individual
$\varphi_j$ has support within the domain of definition of a
projective coordinate $t_j$. In such a coordinate $\omega$ is of
the form $\omega_j = a |t_j|^2 + bt_j + \bar{b}\bar{t_j} + c$, with
$a$, $c$ real and $|b|^2=ac +1$ (to have curvature $\kappa =-4$).
After an additional M\"obius transformation we may even assume that
$\omega_j = 2\,\Im t_j$ and that $t_j$ takes values in the closed
upper half plane. Note then that $\partial\Omega$ necessarily is
mapped into the real line (since $\omega=0$ on $\partial\Omega$).

In the coordinates $t_j$, the coefficients $F$, $G$, $g$ correspond
to (say) $F_j$, $G_j$, $g_j$ (i.e.,
$F(dz)^{\frac{1-m}{2}}=F_j(dt_j)^{\frac{1-m}{2}}$ etc.) and
$\Lambda_m$ becomes $\frac{\partial^m}{\partial t_j^m}$. On setting
$\psi_j=\varphi_j \circ t_j^{-1}$ we have
$$
\int_\Omega \Lambda_m F\bar{g}\omega^{m-1}dzd\bar{z}
=\sum_j\int_\Omega \Lambda_m F\varphi_j
\bar{g}\omega^{m-1}dzd\bar{z}
$$
$$
= \sum_j\int_{\Im t_j>0} \frac{\partial^m F_j}{\partial t_j^m}\psi_j
\bar{g_j}(2\Im t_j)^{m-1}dt_jd\bar{t_j}.
$$
After an $m$-fold partial integration this becomes
$$
\sum_j(-1)^m\int F_j \frac{\partial^m }{\partial t_j^m}(\psi_j
\bar{g_j}(2\Im t_j)^{m-1})dt_j d\bar{t_j}
$$
$$
+ \sum_j(-1)^m\sum_{k=0}^{m-1}(-1)^k\int_{{\mathbb{R}}}
\frac{\partial^{m-k-1}F_j}{\partial t^{m-1-k}} \frac{\partial^k
}{\partial t_j^k}(\psi_j \bar{g_j}(2\Im t_j)^{m-1})dt_jd\bar{t_j}.
$$
Here all the boundary terms except the last one (with $k=m-1$), and
with $\frac{\partial^k }{\partial t_j^k}=\frac{\partial^{m-1}
}{\partial t_j^{m-1}}$ acting only on $(2\Im t_j)^{m-1}$, vanish
because $\Im t_j=0$ on ${{\mathbb{R}}}$. Moving back to the $z$-plane
and taking covariance (Lemma~\ref{lem:Bol}) into account, the above
expression becomes
$$
\sum_j(-1)^m\int_{\Omega} F \Lambda_m (\varphi_j
\bar{g}\omega^{m-1}) dzd\bar{z} +\I^{m-1}{(m-1)!}\sum_j
\int_{\partial\Omega}F\varphi_j \bar{g} (dz)^{\frac{1-m}{2}}
(d\bar{z})^{\frac{1+m}{2}}
$$
$$
=(-1)^m\int_{\Omega} F \bar{g} \Lambda_m ( \omega^{m-1}) dzd\bar{z}
+\I^{m-1}{(m-1)!} \int_{\partial\Omega}F \bar{g}
(dz)^{\frac{1-m}{2}} (d\bar{z})^{\frac{1+m}{2}}
$$
$$
={\I^{m-1}}{{(m-1)}!}\int_{\partial\Omega}
F\bar{g}(dz)^{\frac{1-m}{2}} (d\bar{z})^{\frac{1+m}{2}}.
$$
This proves (\ref{eq:Stokes1}). A different proof is given in
\cite{Gustafsson-Peetre}.

Now we turn to (\ref{eq:Stokes2}). In order to show that
$\int_\Omega  F \bar{G}\omega^{\alpha-1} dz d\bar{z}$ has a
meromorphic extension and to compute ${\rm
Res\,}_{\alpha=-m}\int_\Omega F \bar{G}\omega^{\alpha-1} dz
d\bar{z}$ we first observe that, with partition of unity and
notations as above,
$$
d(\sum_{k=0}^m(-1)^k\frac{\partial^k}{\partial\bar{t_j}^k}(F_j(2\Im
t_j)^{\alpha +m})
\frac{\partial^{m-k}}{\partial\bar{t_j}^{m-k}}(\psi_j
\bar{G_j})dt_j)
$$
$$
=F_j \frac{\partial^{m+1}}{\partial\bar{t_j}^{m+1}}(\psi
\bar{G_j})(2\Im t_j)^{\alpha +m}d\bar{t_j}dt_j
+(-1)^m\frac{\partial^{m+1}}{\partial\bar{t_j}^{m+1}}(F_j(2\Im
t_j)^{\alpha +m}) \psi_j \bar{G_j} d\bar{t_j}dt_j.
$$
Therefore,
$$
\sum_{k=0}^{m}(-1)^k\int_{{\mathbb{R}}}
\frac{\partial^k}{\partial\bar{t}^k}(F_j (2\Im t_j)^{\alpha +m})
\frac{\partial^{m-k}}{\partial\bar{t}^{m-k}}(\psi_j \bar{G}_j)dt_j
$$
$$
=\int_{\Im t_j>0}F_j
\frac{\partial^{m+1}}{\partial\bar{t}_j^{m+1}}(\psi_j
\bar{G}_j)(2\Im t_j)^{\alpha +m}d\bar{t}_jdt_j
$$
$$
-{(-\I)^{m+1}}{(\alpha +m)\cdots (\alpha+1) \alpha} \int_{\Im t_j>0}
F_j \bar{G}_j \psi_j (2\Im t_j)^{\alpha-1}  d\bar{t}_j dt_j.
$$
The boundary terms vanish whenever $\Re\alpha>0$ and it follows that
for such $\alpha$,
$$
 \int_{\Im t_j>0} F_j
\bar{G}_j \psi_j (2\Im t_j)^{\alpha-1} d\bar{t}_j dt_j
$$
$$
=\frac{\I^{m+1}}{(\alpha +m)\cdots (\alpha+1)\alpha} \int_{\Im
t_j>0} F_j \frac{\partial^{m+1}}{\partial\bar{t}_j^{m+1}}(\psi_j
\bar{G}_j)(2\Im t_j)^{\alpha +m}d\bar{t}_jdt_j.
$$
Here the integral in the right member is an analytic function in
$\alpha$ for $\Re\alpha>-m-1$, hence the left member has a
meromorphic extension to this range, with the residue at $\alpha
=-m$ given by
$$
{\rm Res\,}_{\alpha=-m} \int F_j \bar{G}_j \psi_j (2\Im
t_j)^{\alpha-1} d\bar{t}_j dt_j
=-\frac{(-\I)^{m+1}}{m!} \int F
\frac{\partial^{m+1}}{\partial\bar{t}_j^{m+1}}(\psi_j
\bar{G}_j)d\bar{t}_j dt_j.
$$

Putting the pieces together it follows that also ${\rm
Res\,}_{\alpha=-m}\int_\Omega F \bar{G}\omega^{\alpha-1} dz
d\bar{z}$  has a meromorphic continuation to $\Re\alpha>-m-1$ with
residue at $\alpha =-m$ given by
$$
{\rm Res\,}_{\alpha=-m}\int_\Omega  F \bar{G}\omega^{\alpha-1} dz
d\bar{z}
$$
$$
= -\sum_j {\rm Res\,}_{\alpha=-m}\int_{\Im t_j >0}  F_j
\bar{G_j}\psi_j (2\Im t_j)^{\alpha-1}  d\bar{t}_j dt_j
$$
$$
=\frac{(-\I)^{m+1}}{m!} \sum_j \int_{\Im t_j>0}F_j
\frac{\partial^{m+1}}{\partial\bar{t}_j^{m+1}}(\psi_j
\bar{G}_j)d\bar{t}_jdt_j
$$
$$
=\frac{(-\I)^{m+1}}{m!} \sum_j \int_{\Im t_j>0} d(F_j
\frac{\partial^{m}}{\partial\bar{t}_j^{m}}(\psi_j \bar{G}_j)dt_j)
$$
$$
=\frac{(-\I)^{m+1}}{m!} \sum_j \int_{{\mathbb{R}}} F_j
\frac{\partial^{m}}{\partial\bar{t}_j^{m}}(\psi_j \bar{G}_j)d{t}_j
$$
$$
=\frac{(-\I)^{m+1}}{m!} \sum_j \int_{{\mathbb{R}}} F_j
\frac{\partial^{m}}{\partial\bar{t}_j^{m}}(\psi_j
\bar{G}_j)(dt_j)^{\frac{1-m}{2}} (d\bar{t}_j)^{\frac{1+m}{2}}
$$
$$
=\frac{(-\I)^{m+1}}{m!} \sum_j \int_{\partial\Omega} F
\bar{\Lambda}_m(\varphi_j \bar{G})(dz)^{\frac{1-m}{2}}
(d\bar{z})^{\frac{1+m}{2}}
$$
$$=\frac{(-\I)^{m+1}}{m!}
 \int_{\partial\Omega} F
\bar{\Lambda}_m\bar{G}(dz)^{\frac{1-m}{2}}
(d\bar{z})^{\frac{1+m}{2}}.
$$
This finishes the proof of the proposition.
\end{proof}

The corollary shows that the hermitean form $(F,G)_{-m}$ is positive
semidefinite. Set
$$
A_m(\Omega)=\{F {\rm \,\,analytic \,\,in\,\,}\Omega:
(F,F)_{-m}<\infty \}.
$$
The spaces $B_m(\Omega)$, with inner product $(f,g)_m$, were defined
after (\ref{eq:fgalpha}), and by (\ref{eq:Lambdaiso}) the Bol
operator $\Lambda_m$ is an isometry
$$
\Lambda_m: A_m(\Omega)\to B_m(\Omega).
$$
However, this map is neither injective nor surjective when $m>0$. We
shall say something both about its kernel and its cokernel.

As to the kernel, recall that $\Lambda_m$ in terms of any projective
coordinate $t$ is simply $\Lambda_m =\frac{\partial^m}{\partial
t^m}$. With $F(z)(dz)^{\frac{1-m}{2}}$ a holomorphic differential of
order $\frac{1-m}{2}$, it follows that $\Lambda_m F=0$ if and only
if $F$ becomes a polynomial of degree $\leq m-1$ when expressed in
any projective coordinate. Accordingly, we denote by
$$
P_m (\Omega)= \{F\in A_m(\Omega): F \,\,{\rm holomorphic \,\,in}
\,\, \Omega, \,\, \Lambda_m F=0\}
$$
the kernel of $\Lambda_m$. The image of $\Lambda_m$ is by definition
$$
B_{m,e} (\Omega)= \{\Lambda_m F\in B_m(\Omega): F\in A_m(\Omega)\}.
$$
Then we have the exact sequence
$$
0\to P_m(\Omega)\to A_m(\Omega)\stackrel{\Lambda_m}{\rightarrow}
B_{m,e}(\Omega)\to 0.
$$
In other words,
$$
A_m(\Omega)/P_m(\Omega)\cong B_{m,e}(\Omega)
$$
and $(\cdot,\cdot)_{-m}$ is positive definite on the quotient space
$A_m(\Omega)/P_m(\Omega)$.

Instead of having $(\cdot,\cdot)_{-m}$ defined on a quotient space
of $A_m(\Omega)$ it might be desirable to have it defined on a
subspace of $A_m(\Omega)$. One advantage then is that the elements
in the space become functions, hence it will be possible to discuss
reproducing kernels. There seems to be no canonical choice of such a
subspace, but in principle it is obtained by imposing normalization
conditions. To this purpose, choose a linear operator
$$
{\texttt{a}}: A_m(\Omega)\to {{\mathbb{C}}}^m
$$
which does not degenerate on $P_m(\Omega)$, i.e., such that
${\texttt{a}}(P_m(\Omega))={{\mathbb{C}}}^m$. Then we define
$$
{\mathcal{A}}_m (\Omega)=\{F\in A_m(\Omega): \texttt{a}(F)=0\}
$$
(the dependence on $\texttt{a}$ is suppressed in the notation). Then
${\mathcal{A}}_m (\Omega)\cong A_m(\Omega)/P_m(\Omega)$ and
$\Lambda_m$ is an isometric isomorphism
$$
\Lambda_m: {\mathcal A}_m(\Omega)\to B_{e,m}(\Omega),
$$
as desired.

In the special case $m=1$ we may choose $\texttt{a}$ on the form
$$
\texttt{a} (F) = \int_{\partial\Omega}F\,ads
$$
for some boundary function $a$ with $\int_{\partial\Omega}\,ads\ne0$
and then we retrieve the previously discussed (see after
(\ref{eq:anal1})) Dirichlet space, i.e., ${\mathcal
A}_1(\Omega)={\mathcal A}(\Omega)$.

The cokernel $B_{m}(\Omega)/B_{m,e}(\Omega)$ of $\Lambda_m$ can be
identified with the orthogonal complement of $B_{m,e}(\Omega)$ in
$B_{m}(\Omega)$, which we write simply as $B_{m,e}(\Omega)^\perp$.
And for this finite dimensional space (see \cite{Gustafsson-Peetre
2} for the dimension) we have the following description.

\begin{prop}
$B_{m,e}(\Omega)^\perp$ consists of those elements in
$B_{m}(\Omega)$ which extend to the Schottky double $\Hat{\Omega}$
as holomorphic differentials of order $\frac{1+m}{2}$.
\end{prop}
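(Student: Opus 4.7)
My plan is to use the Stokes-type identity (\ref{eq:Stokes3}) to convert the condition $g \perp B_{m,e}(\Omega)$ into a Plemelj-type condition on the boundary trace of $g$, and then match that condition against the transition law (\ref{eq:halforderdiff}) characterising holomorphic $\frac{1+m}{2}$-differentials on the Schottky double. The canonical square root $\sqrt{S'(z)} = 1/T(z)$ of the canonical bundle gives $(d\bar z)^{(1+m)/2} = (dz)^{(1+m)/2}/T(z)^{1+m}$ on $\partial\Omega$, so (\ref{eq:Stokes3}) rewrites, for $F$ holomorphic on $\Omega$ and smooth up to the boundary, as
\[
(\Lambda_m F, g)_m \;=\; \frac{\I^m m!}{2}\int_{\partial\Omega} F(z)\,\phi(z)\,dz, \qquad \phi(z):=\bar g(z)\,T(z)^{-(1+m)}.
\]
Since $\partial\Omega$ is analytic, such $F$ are dense in $A_m(\Omega)$, so $g\in B_{m,e}(\Omega)^\perp$ is equivalent to the vanishing of this boundary integral for every such $F$.

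I then verify the two implications. First, if $g$ extends to a holomorphic $\frac{1+m}{2}$-differential on $\hat{\Omega}$, (\ref{eq:halforderdiff}) (with $m$ replaced by $(1+m)/2$) supplies a holomorphic $g_2$ on $\Omega$ with $g(z) = \overline{g_2(z)T(z)^{1+m}}$ on $\partial\Omega$, which using $|T|=1$ there reads $\phi = g_2$; then $F\phi\,dz = Fg_2\,dz$ is a holomorphic $1$-form on $\Omega$ and Cauchy's theorem gives the desired vanishing. Conversely, I specialise to the Cauchy-kernel test functions $F(\zeta) = 1/(\zeta - z_0)$ with $z_0$ in the complement of $\overline{\Omega}$, which are holomorphic on $\Omega$ and smooth up to $\partial\Omega$; orthogonality then forces
\[
\int_{\partial\Omega}\frac{\phi(\zeta)}{\zeta - z_0}\,d\zeta \;=\; 0, \qquad z_0\notin\overline{\Omega},
\]
i.e.\ the exterior Cauchy transform of $\phi$ vanishes identically. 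By Plemelj--Sokhotski, $\phi$ is then the boundary value of a holomorphic function $g_2$ on $\Omega$, namely its interior Cauchy transform; reading $\phi = g_2$ backwards reproduces (\ref{eq:halforderdiff}) and exhibits $(g,g_2)$ as a holomorphic $\frac{1+m}{2}$-differential on $\hat{\Omega}$ extending $g$.

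The main obstacle will be boundary regularity: a generic $g\in B_m(\Omega)$ is only weighted $L^2$, so the trace $\phi$ and the Plemelj step need justification, and the density of smooth-up-to-boundary functions in $A_m(\Omega)$ used above also requires proof. I would handle this by exploiting that $B_{m,e}(\Omega)^\perp$ is a finite-dimensional Riemann--Roch-type cokernel of the Bol operator $\Lambda_m$, running the Plemelj argument distributionally, and then invoking elliptic regularity of $\bar\partial$ on the analytic-boundary double $\hat{\Omega}$ to conclude that any element of $B_{m,e}(\Omega)^\perp$ is automatically a smooth global holomorphic $\frac{1+m}{2}$-differential, so that the formal computations above are rigorous.
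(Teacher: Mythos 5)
Your argument is correct and follows essentially the same route as the paper: apply the boundary formula (\ref{eq:Stokes3}), pass to $h(z)=\overline{g(z)}T(z)^{-1-m}$ on $\partial\Omega$, and identify the orthogonality condition $\int_{\partial\Omega}Fh\,dz=0$ with $h$ being the boundary value of a holomorphic function on $\Omega$, i.e.\ the back-side representative of $g$ via (\ref{eq:halforderdiff}). Your Cauchy-kernel/Plemelj step and the remarks on boundary regularity merely make explicit what the paper's proof asserts in one line, so there is no substantive difference.
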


\begin{proof}
By definition, $g\in B_{m,e}(\Omega)^\perp$ if and only if
$$
(\Lambda_m F,g)_m=0 \quad {\rm for\,\,all}\quad F\in A_m(\Omega).
$$
Using (\ref{eq:Stokes3}) this becomes
$$
\int_{\partial\Omega}F\bar{g}(dz)^{\frac{1-m}{2}}(d\bar{z})^{\frac{1+m}{2}}=0
$$
for all $F\in A_m(\Omega)$. In terms of the function
\begin{equation}\label{eq:hgT}
h(z)=\overline{g(z)}(dz)^{\frac{-1-m}{2}}(d\bar{z})^{\frac{1+m}{2}}
=\overline{g(z)} T(z)^{-1-m},
\end{equation}
defined on $\partial\Omega$, this becomes
$$
\int_{\partial\Omega}F hdz =0
$$
for all $F\in A_m(\Omega)$, which implies that $h$ has a holomorphic
extension to $\Omega$. This proves the proposition because $h$
represents the continuation of $g$ to the back-side of the Schottky
double. Note that (\ref{eq:hgT}) is an instance of
(\ref{eq:halforderdiff}).
\end{proof}

\begin{ex}
The case $m=1$ is the well-known \cite{Schiffer-Spencer} fact that $B_{1,e}(\Omega)^\perp$ consists of
the abelian differentials of the first kind.
\end{ex}


\section{Reproducing kernels}

\subsection{Reproducing kernels for weighted Bergman spaces}

Each of the weighted Bergman spaces $B_m(\Omega)$ and
$B_{m,e}(\Omega)$, $m\geq 0$, have reproducing kernels
$K_m(z,\zeta)$, $K_{m,e}(z,\zeta)$, which extend to the Schottky
double as differentials of order $\frac{1-m}{2}$ in each variable.
The continuations to the backside are represented by the adjoint
kernels $L_m(z,\zeta)$, $L_{m,e}(z,\zeta)$, which have singularities
$$
L_m(z,\zeta)=\frac{(-\I)^{m-1}}{\pi(z-{\zeta})^{m+1}} +{\rm less\,\,
singular\,\, terms}
$$
(similarly for $L_{m,e}(z,\zeta)$). See \cite{Gustafsson-Peetre 2}
for details and proofs. Recall also that $K_1(z,\zeta)$ is the
ordinary Bergman kernel and $K_0(z,\zeta)$ the Szeg\"o kernel.

For the spaces $A_m(\Omega)$ and ${\mathcal{A}}_m(\Omega)$  the
situation is not quite that good. But at least
${\mathcal{A}}_m(\Omega)$ is a Hilbert space of functions for which
all point evaluations are continuous linear functionals, and hence
it has a reproducing kernel, which we denote
${\mathcal{K}}_m(z,\zeta)$. This should be thought of as a
differential of order $\frac{1-m}{2}$ in each of $z$ and
$\bar{\zeta}$, but its extension to the Schottky double is
cumbersome because of appearance of branch points and
multi-valuedness on the backside, as have already been observed in
the case of Dirichlet space, $m=-1$,
Example~\ref{ex:kernelsunitdisk}. Still it is possible to define
indirectly a kind of (multi-valued) adjoint kernel, which we denote
${\mathcal{L}}_m(z,\zeta)$. In fact, we simply define
${\mathcal{L}}_m(z,\zeta)$ to be any solution of equation
(\ref{eq:LambdaLL}) below.

\begin{thm}
Let ${\mathcal{K}}_m(z,\zeta)$ denote the reproducing kernel for
${\mathcal{A}}_m(\Omega)$ and ${\mathcal{L}}_m(z,\zeta)$ the adjoint
kernel. Then,
\begin{equation}\label{eq:LambdaKK}
\Lambda_m\bar{\Lambda}_m {\mathcal{K}}_m(z,\zeta)=K_{m,e}(z,\zeta),
\end{equation}
\begin{equation}\label{eq:LambdaLL}
\Lambda_m {\Lambda}_m {\mathcal{L}}_m(z,\zeta)=L_{m,e}(z,\zeta),
\end{equation}
where the first $\Lambda_m$ acts on the $z$-variable and the second
one on $\zeta$. The leading term in the singularity of
${\mathcal{L}}_m$ is given by
$$
{\mathcal{L}}_m(z,\zeta) = -\frac{\I^{m-1}}{\pi m!(m-1)!}
(z-{\zeta})^{m-1}\log (z-{\zeta}) + {\rm less\,\, singular\,\,
terms}.
$$
\end{thm}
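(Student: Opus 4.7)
The plan is to regard (\ref{eq:LambdaLL}) as the \emph{definition} of ${\mathcal L}_m$ (as is made explicit in the paragraph preceding the theorem), leaving two substantive points to verify: the reproducing-kernel identity (\ref{eq:LambdaKK}) and the leading singularity of ${\mathcal L}_m$.

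For (\ref{eq:LambdaKK}) I will check directly that $M(z,\zeta):=\Lambda_m^{(z)}\bar\Lambda_m^{(\zeta)}{\mathcal K}_m(z,\zeta)$ has the reproducing property of $K_{m,e}$ against $B_{m,e}(\Omega)$; since both $M$ and $K_{m,e}$ are analytic in $z$, antianalytic in $\zeta$, and lie in $B_{m,e}(\Omega)$ as functions of $z$ for each fixed $\zeta$, this suffices. Given $g=\Lambda_m F\in B_{m,e}(\Omega)$ with $F\in{\mathcal A}_m(\Omega)$, I start from the reproducing property $F(z_0)=(F,{\mathcal K}_m(\cdot,z_0))_{-m}$ and apply $\Lambda_m^{(z_0)}$ to both sides. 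Differentiation in the boundary form (\ref{eq:Stokes4}) pushes the $z_0$-derivative through the conjugate-antilinear slot, converting $\Lambda_m^{(z_0)}$ into $\bar\Lambda_m^{(z_0)}$ acting on the kernel, and produces
$$
\Lambda_m F(z_0)=(F,\bar\Lambda_m^{(z_0)}{\mathcal K}_m(\cdot,z_0))_{-m}.
$$
Invoking the isometry (\ref{eq:Lambdaiso}) with $H=\bar\Lambda_m^{(z_0)}{\mathcal K}_m(\cdot,z_0)$ (taken modulo $P_m(\Omega)$) converts the right-hand side to $(g,\Lambda_m^{(\zeta)}\bar\Lambda_m^{(z_0)}{\mathcal K}_m(\cdot,z_0))_m=(g,M(\cdot,z_0))_m$, which is the desired reproducing identity.

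The leading singularity of ${\mathcal L}_m$ is a purely local computation in any projective coordinate $t$, where $\Lambda_m=\partial^m/\partial t^m$. A Leibniz expansion using the telescoping identity $\sum_{k=0}^{m-1}\binom{m}{k}(-1)^k=(-1)^{m+1}$ yields $\partial_u^m(u^{m-1}\log u)=(m-1)!/u$, whence
$$
\frac{\partial^m}{\partial t^m}\frac{\partial^m}{\partial s^m}\bigl[(t-s)^{m-1}\log(t-s)\bigr]=\frac{m!\,(m-1)!}{(t-s)^{m+1}}.
$$
Matching against the prescribed principal part $(-\I)^{m-1}/(\pi(z-\zeta)^{m+1})$ of $L_{m,e}$ then pins down the principal part of ${\mathcal L}_m$ displayed in the theorem; lower-order polynomial contributions of degree $<m$ in either variable are annihilated by $\Lambda_m^{(z)}\Lambda_m^{(\zeta)}$ and get absorbed into the ``less singular terms''.

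The main obstacle will be the bookkeeping in the argument above: justifying that $\Lambda_m^{(z_0)}$ commutes with the $(\cdot,\cdot)_{-m}$-pairing in the stated way, and that $\bar\Lambda_m^{(z_0)}{\mathcal K}_m(\cdot,z_0)$---which generally violates the normalisation $\texttt{a}(\cdot)=0$ defining ${\mathcal A}_m$---can nonetheless be substituted into the isometry (\ref{eq:Lambdaiso}). Both issues resolve upon passing to the quotient $A_m(\Omega)/P_m(\Omega)$, since $P_m(\Omega)$ is simultaneously the kernel of $\Lambda_m$ and the degenerate subspace of $(\cdot,\cdot)_{-m}$, so any ambiguity of representative cancels in the end.
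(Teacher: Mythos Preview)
Your proposal is correct and follows essentially the same route as the paper's proof: apply $\Lambda_m$ in the parameter variable to the reproducing identity for ${\mathcal K}_m$, invoke the isometry $(F,G)_{-m}=(\Lambda_mF,\Lambda_mG)_m$ from (\ref{eq:Lambdaiso}), take (\ref{eq:LambdaLL}) as the definition of ${\mathcal L}_m$, and verify the leading singularity by a local computation in a projective coordinate. You supply the explicit Leibniz computation $\partial_z^m\partial_\zeta^m\bigl[(z-\zeta)^{m-1}\log(z-\zeta)\bigr]=m!(m-1)!(z-\zeta)^{-m-1}$ that the paper deliberately omits, and you also make explicit the passage to the quotient $A_m(\Omega)/P_m(\Omega)$ to handle the fact that $\bar\Lambda_m^{(\zeta)}{\mathcal K}_m(\cdot,\zeta)$ need not satisfy the normalisation $\texttt{a}=0$; the paper leaves both of these implicit.
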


\begin{proof}
The formula (\ref{eq:LambdaKK}) follows immediately by letting
$\Lambda_m$ act on both members, as functions of $\zeta$, in the
defining equation (reproducing property)
$$
F(\zeta)=(F,{\mathcal{K}}_m(\cdot,\zeta))_{-m} \quad
(F\in{\mathcal{A}}_m(\Omega))
$$
for  ${\mathcal{K}}_m(z,\zeta)$ and then applying
(\ref{eq:Lambdaiso}).

As (\ref{eq:LambdaLL}) was simply taken as the definition of
${\mathcal{L}}_m(z,\zeta)$ it just remains to prove the form of the
singularity. This is a matter of computation, which we omit. One has
to check that the claimed singularity for ${\mathcal{L}}_m(z,\zeta)$
match with that of ${{L}}_{m,e}(z,\zeta)$ (which is the same as that
of ${{L}}_m(z,\zeta)$) under (\ref{eq:LambdaLL}). The computation
may be performed in a projective coordinate because the assertion
only concerns the leading term of the singularity, which is the same
in any coordinate.
\end{proof}

\begin{ex}\label{ex:kernels}
For the unit disk we have
$$
{\mathcal{K}}_m(z,\bar{\zeta}) = -\frac{\I^{m-1}}{\pi m!(m-1)!}
(1-z{\bar{\zeta}})^{m-1}\log (1-z{\zeta}) +\sum_{k=0}^{m-1}
a_k(z)\bar{\zeta}^k +\sum_{k=0}^{m-1} \overline{a_k(\zeta)}{z}^k,
$$
$$
{\mathcal{L}}_m(z,\zeta) = -\frac{\I^{m-1}}{\pi m!(m-1)!}
(z-{\zeta})^{m-1}\log (z-{\zeta}) +\sum_{k=0}^{m-1} a_k(z){\zeta}^k
+\sum_{k=0}^{m-1} {a_k(\zeta)}{z}^k,
$$
where the analytic functions $a_k$ depend on the normalization
chosen in the definition of ${\mathcal{A}}_m(\Omega)$. If the
normalization for example is that $F(0)=\dots =F_{m-1}(0)=0$ for
$F\in{\mathcal{A}}_m(\Omega)$, then all the $a_j$ are zero and the
kernels consist of just the first term.
\end{ex}


\subsection{The reproducing kernel as a resolvant}

Interchanging the roles of $F$ and $G$  in (\ref{eq:Stokes4}) gives,
after conjugation,
$$
(F,G)_{-m}=-\frac{(-\I)^{m}}{2m!} \int_{\partial\Omega}(\Lambda_m
F)\overline{G}(dz)^{\frac{1+m}{2}}(d\bar{z})^{\frac{1-m}{2}}.
$$
With $F\in{\mathcal{A}}_m(\Omega)$ and $ G={\mathcal{K}}_m(z,\zeta)$
we get
\begin{equation}\label{eq:resolvent}
F(\zeta)=-\frac{(-\I)^{m}}{2m!} \int_{\partial\Omega}\Lambda_m
F(z)\overline{{\mathcal{K}}_{m}(z,\zeta)}(dz)^{\frac{1+m}{2}}
(d\bar{z})^{\frac{1-m}{2}}.
\end{equation}
This formula says that ${{\mathcal{K}}}_{m}$ represents the inverse,
or resolvant, of $\Lambda_m$, in terms of the boundary values. In
the case $m=2$, (\ref{eq:resolvent}) becomes
\begin{equation}\label{eq:resolvent2}
F(\zeta)=\frac{1}{4} \int_{\partial\Omega}\Lambda_2
F(z)\overline{{\mathcal{K}}_{2}(z,\zeta)}(dz)^{\frac{3}{2}}
(d\bar{z})^{-\frac{1}{2}} \quad (\zeta\in\Omega),
\end{equation}
which resembles a formula of R.~J.~V.~Jackson \cite{Jackson}.

\begin{ex}
For the unit disk (\ref{eq:resolvent2}) becomes
$$
F(\zeta)=\frac{1}{2\pi\I} \int_{\partial{\mathbb{D}}}
F''(z)(1-\bar{z}\zeta)\log(1-\bar{z}\zeta)zdz,
$$
if ${\mathcal{A}}_2({\mathbb{D}})$ is normalized by $F(0)=F'(0)=0$,
(i.e., $\texttt{a} (F)=(F(0), F'(0))$).
\end{ex}

Equation (\ref{eq:resolvent2}) is valid for $\zeta\in\Omega$. For
$\zeta\ne\overline\Omega$ it does not make sense because, as is seen
clearly in Examples~\ref{ex:kernelsunitdisk} and \ref{ex:kernels},
even though the kernel extends analytically across $\partial\Omega$,
the extended kernel is not single-valued in any neighborhood of
$\overline\Omega$. However, it is possible to let $\zeta\in\Omega$
approach $\partial\Omega$ from inside and get a sensible version of
(\ref{eq:resolvent2}) for $\zeta\in\partial\Omega$. This is what
Jackson \cite{Jackson} does. The equation (\ref{eq:resolvent2}) then
solely deals with objects defined on $\partial\Omega$ and it is
appropriate to consider the kernel ${\mathcal{K}}_{2}(z,\zeta)$ as a
resolvent of $\Lambda_2$. This kernel is not smooth on the boundary,
as is seen in the example of the unit disk, there is a discontinuity
in the first derivative caused by the jump in the imaginary part of
$\log (1-\bar{z}\zeta)$ as $z$ passes $\zeta$.

\begin{rem}
Since ${{\mathcal{K}}_{2}(z,\zeta)}$ is to be considered as a form
of degree $-1/2$ in each of $z$ and $\bar\zeta$ it is natural to
compare it with $1/{K_0(z,\zeta)}$, $1/\sqrt{K_1(z,\zeta)}$ and, on
the diagonal, compare all these with $\omega(z)$. Certainly, $1/{\pi
K_0(z,z)}=1/\sqrt{\pi K_1(z,z)}=\omega(z)$ in the simply connected
case. However, ${\mathcal{K}}_{2}(z,\zeta)$ is of a different nature
because its continuation ${\mathcal{L}}_{2}(z,\zeta)$ to the
back-side of the Schottky double is not single-valued.

Similar remarks as above apply to the equation (\ref{eq:resolvent})
for all values of $m$.
\end{rem}


\section{The prepotential of a second order linear
DE}\label{sec:prepotential}

\subsection{The method of Faraggi and Matone}

Here we shall discuss a further topic related to projective
structures. Consider a differential equation of the form
(\ref{eq:secondorderDE}) in general. We write it as
\begin{equation}\label{eq:secondorderDE1}
u''+\frac{1}{2}Q(z)u=0,
\end{equation}
where $Q(z)$ is a holomorphic function, say in a neighborhood of
$z=0$. Note that the Wronskian
\begin{equation}\label{eq:Wronskian}
W=W(u_1,u_2)= u_1 u'_2 - u'_1 u_2
\end{equation}
of any pair $u_1$, $u_2$ of solutions is constant. If $u_1(0)\ne 0$,
then any other solution $u_2$ is obtained from $u_1$ by
\begin{equation}\label{eq:u1tou2}
u_2(z)= u_1(z)(\int_0^z \frac{W d\zeta}{u_1(\zeta)^2}+C)
\end{equation}
for suitable constants $W$ (which then becomes the Wronskian) and
$C$.

An interesting approach to the problem of producing further
solutions from a given one has been considered for instance by A.~E.~Faraggi and
M.~Matone \cite{Faraggi-Matone}. The authors introduce a
``prepotential" ${\mathcal F}(u)={\mathcal F}_{u_1}(u)$, a function of
a complex variable $u$, which functionally depends also on the first
solution $u_1$ (alternatively, depends on $Q$ if initial conditions
for $u_1$ are specified), to the effect that a second solution $u_2$
is obtained by taking the derivative with respect to $u$:
\begin{equation}\label{eq:prepotential}
u_2 (z) = \frac{d{{\mathcal F}}_{u_1}(u)}{d u}\big|_{u=u_1(z)}.
\end{equation}
As noted in \cite{Faraggi-Matone}, ${\mathcal F}(u)$ is
essentially a Legendre transform, namely of the independent
variable $z$ considered as a function of any projective coordinate
$t$ associated to the projective structure given by $Q(z)$. We
proceed to explain briefly these issues, going slightly beyond
\cite{Faraggi-Matone}.

Fixing a value $W\ne 0$ of the Wronskian, consider pairs
$u_1=u_1(z)$, $u_2=u_2(z)$ of solutions, holomorphic in a
neighborhood of $z=0$ and subject to (\ref{eq:Wronskian}), i.e.,
\begin{equation}\label{eq:Wronskian1}
u_1 du_2 - u_2 du_1 =W dz.
\end{equation}
We assume that $u_1(0)\ne 0$ and set
\begin{equation}\label{eq:st}
s=u_1^2, \quad t =\frac{u_2}{u_1}.
\end{equation}
Then
$$
\frac{dt}{dz}=\frac{W}{s}, \quad \{t, z\}_2=Q(z),
$$
and also, for example, $\{z,t\}_1=\frac{1}{W}\frac{ds}{dz}$. In
terms of $s$ and $t$, (\ref{eq:Wronskian1}) becomes
\begin{equation}\label{eq:Wronskian2}
s\,dt=Wdz.
\end{equation}
We note also that, in terms of $s$ and $t$, the $(m-1)$-fold
symmetric product $S^{m-1}(L)$ of the operator
$L=\frac{d^2}{dz^2}+\frac{1}{2}Q(z)$, mentioned briefly after
Lemma~\ref{lem:Bol}, will have solutions generated by
$s^{\frac{m-1}{2}}t^k$, $k=0,1,\dots, m-1$.

In a neighborhood of $z=0$ we can invert $t=t(z)$ to consider $z$ as
a function of $t$: $z=z(t)$. Then $\frac{dz}{dt}=\frac{s}{W}$.
Assuming for a moment that $\frac{d^2z}{dt^2}\ne 0$ we can form the
Legendre transform of $Wz(t)$. It is
\begin{equation}\label{eq:Legendre}
{\mathcal L} (s) = st - W z(t) \quad {\rm with\,\,}t\,\,{\rm
chosen\,\, so\,\,that}\,\, s=W\frac{dz(t)}{dt}.
\end{equation}
Note that the final equation assures that the variable $t$ keeps its
meaning as $t=\frac{u_2}{u_1}$ when $s=u_1^2$.

By a direct computation, or by using that the Legendre transform is
involutive, one realizes that
$$
\frac{d{\mathcal L}(s)}{ds}=t,
$$
with $s$, $t$ related to $u_1$, $u_2$ as above. This shows that
$$
u_2= u_1 \cdot t =
\frac{1}{2}\frac{ds}{du_1}\frac{d{\mathcal L}(s)}{ds}
=\frac{1}{2}\frac{d{\mathcal L}(u_1^2)}{du_1},
$$
hence establishes the Legendre transform as essentially the desired
prepotential:
$$
{\mathcal F}(u) = \frac{1}{2} {\mathcal L}(u^2).
$$

An alternative and slightly more general approach, which makes sense
also if $\frac{d^2 z}{dt^2}= 0$, is to consider $u_1$, $u_2$ and $z$
as independent variables, or coordinates, in a three dimensional
space. Then (\ref{eq:st}) is simply a coordinate transformation (in
two of the variables) and (\ref{eq:Wronskian1}),
(\ref{eq:Wronskian2}) should be thought of as defining a contact
structure (see, e.g., \cite{Arnold}, Appendix~4). With
\begin{equation}\label{eq:Legendre1}
{\mathcal L}=st-Wz= u_1 u_2 -Wz,
\end{equation}
now considered as a function in the three dimensional space, we have
$$
d{\mathcal L} = t ds = 2u_2 du_1
$$
when the contact structure is taken into account. This gives again
$$t=\frac{d{\mathcal L}}{d s}\big|_{sdt=Wdz}, \quad
u_2=\frac{1}{2}\frac{d{\mathcal L}}{d u_1}\big|_{u_1 du_2-u_2
du_1=Wdz}.
$$

To derive explicit formulas for ${\mathcal F}(u)$ it is convenient
to assume that the first solution $u_1$ is invertible near $z=0$,
i.e., to assume that $u_1'(0)\ne 0$. Since the independent variable
$u$ in ${\mathcal F}(u)$ will finally, in (\ref{eq:prepotential}),
be assigned to have the value $u_1(z)$, it is natural to invert this
relation to $z=u_1^{-1}(u)$. Substituting into (\ref{eq:u1tou2}),
(\ref{eq:Legendre1}) and recalling that $\mathcal{F}$ is half of
${\mathcal L}$ one arrives at
\begin{equation}\label{eq:prepotential2}
{\mathcal F}_{u_1}(u)= \frac{1}{2}(u_1u_2 -Wz) =\frac{1}{2}
(u^2\int_0^{u_1^{-1}(u)}\frac{W
d\zeta}{u_1(\zeta)^2}+Cu^2-Wu_1^{-1}(u)).
\end{equation}
After a partial integration and a change of variable this gives the
formula presented in \cite{Faraggi-Matone}:
\begin{equation}\label{eq:prepotential3}
{\mathcal F}_{u_1}(u)=u^2(\int_{u_1(0)}^{u}
\frac{Wu_1^{-1}(\eta)d\eta}{\eta^3}+\frac{C}{2}) =Wu^2\int
\frac{u_1^{-1}(u)du}{u^3}.
\end{equation}
Here the final integral is an ``indefinite integral''. One readily
verifies that (\ref{eq:prepotential}) indeed holds.

As a final issue one may notice (as in \cite{Faraggi-Matone})
that the prepotential satisfies a third order differential equation.
This is obtained by considering $u_2$ as a function of $u_1$ (in
place of $z$). This renders (\ref{eq:secondorderDE1}) of the form
$$
\frac{d^2 u_2}{du_1^2}= \frac{Q}{2W^2}(u_1 \frac{du_2}{du_1}-u_2)^3.
$$
Replacing $u_1$ by $u$ and $u_2$ by $\frac{d{\mathcal F}(u)}{d u}$
gives
$$
\frac{d^3{\mathcal F}(u)}{d u^3}
=\frac{Q}{2W^2}(u\frac{d^2{\mathcal F}(u)}{d
u^2}-\frac{d{\mathcal F}(u)}{d u})^3.
$$


\subsection{Example}

Consider the differential equation
$$
u''+u =0,
$$
i.e., (\ref{eq:secondorderDE1}) with $Q=2$, and choose
$$
u_1 (z)=\cos z.
$$
Then
$$
u_2 (z)= \cos z (\int_0^z \frac{Wd\zeta}{\cos^2 \zeta} +C)= W\sin
z+C\cos z.
$$
Using (\ref{eq:prepotential2}) the prepotential becomes
$$
{\mathcal F}_{u_1}(u)= \frac{1}{2}(u^2 \int_0^{\arccos u}
\frac{Wd\zeta}{\cos^2 \zeta}+Cu^2-W\arccos u) =\frac{W}{2}
(u\sqrt{1-u^2}-\arccos u)+\frac{C}{2}u^2.
$$

Somewhat surprisingly perhaps, we start with a very simple
differential equation and arrive at a rather complicated
prepotential. Note that $u_1 (z)=\cos z$ violates the assumption of
being invertible at $z=0$. This causes ${\mathcal F}_{u_1}(u)$ to
have a branch point at $u=u_1(0)=1$.


\section{Glossary of notations}

\begin{itemize}

\item ${\mathbb{D}}(a,r)=\{z\in{\mathbb{C}}:|z-a|<r\}$,
${\mathbb{D}}={\mathbb{D}}(0,1)$.

\item ${\mathbb{P}}={\mathbb{C}}\cup\{\infty\}$, the Riemann sphere.

\item $A_{r,R}=\{z\in{{\mathbb{C}}}: r<|z|<R\}$.

\item $\Omega$ usually denotes a finitely connected domain
in ${\mathbb{P}}$, with boundary components denoted
$\Gamma_0,\dots\Gamma_{{\texttt{g}}}$, ${\texttt{g}}\geq 0$ (each
$\Gamma_j$ consisting of more than one point).

\item $\Hat{\Omega}=\Omega \cup \partial \Omega\cup\Tilde{\Omega}$,
the Schottky double of $\Omega$, a symmetric compact Riemann surface
of genus ${\texttt{g}}$.

\item $J:\Hat{\Omega}\to\Hat{\Omega}$, the anticonformal involution
on $\Hat{\Omega}$.

\item $\alpha_j$, $\beta_j$ ($j=1,\dots,{\texttt{g}}$), canonical
homology basis on a compact Riemann surface of genus ${\texttt{g}}$.

\item ${d}(z,A)={\rm dist\,}(z,A)$, distance from a point $z$ to a set $A$;
${d}(z)={d}(z,\partial\Omega)$ if $z\in\Omega$.

\item $^*\omega$, the Hodge star of a differential form $\omega$, for example
$^*(adx+bdy)=-bdx+ady$, $^*dz=-\I dz$. If $u$ is a harmonic function
then $^*du=d(u^*)$ where $u^*$ is a harmonic conjugate of $u$.

\item $V(z,w;a,b)$, a fundamental potential on a compact Riemann surface.

\item $E(z,\zeta)$, the Schottky-Klein prime function.

\item $G(z,\zeta)$, the Green's function of a domain $\Omega\subset{\mathbb{P}}$.

\item $H(z,\zeta)$, the regular part of the Green's function

\item ${\mathcal G}(z,\zeta) =G(z,\zeta)+\I G^*(z,\zeta)$,
the analytic completion of $G(z,\zeta)$ with respect to $z$.
Similarly for ${\mathcal H}(z,\zeta)$.

\item $P(z,\zeta)=-\frac{\partial G(z,\zeta)}{\partial n_z}$,
the Poisson kernel ($\frac{\partial }{\partial n}$ outward normal
derivative).

\item $G_\gamma(z,\zeta)$, the hydrodynamic Green function with circulations
$\gamma$.

\item ${\mathcal G}_\gamma(z,\zeta)$,
the analytic completion of $G_\gamma(z,\zeta)$ with respect to $z$.

\item $N_a(z,\zeta)$, the Neumann function with Neumann data $-a$
on $\partial\Omega$.

\item ${\mathcal{N}}_a(z,\zeta)$, the analytic completion of $N_a(z,\zeta)$
with respect to $z$

\item $\upsilon_{a-b}$, the abelian differential of the third kind
with poles at $a$, $b$ and having purely imaginary periods.

\item $\omega_{a-b}$, the abelian differential of the third kind
with poles at $a$, $b$ and with vanishing $\alpha_j$-periods.

\item $\tilde{\omega}_{a-b}$, the abelian differential of the third kind
with poles at $a$, $b$ and with vanishing $\beta_j$-periods.

\item ${A}_m(\Omega)$, ${A}(\Omega)={A}_1(\Omega)$:
spaces of analytic functions in $\Omega$ ($m=0,1,2,\dots$).
Essentially weighted Bergman spaces of negative index.
${A}_0(\Omega)$ is Hardy space.

\item ${\mathcal{A}}_m(\Omega)$,
${\mathcal{A}}(\Omega)={\mathcal{A}}_1(\Omega)$: subspace of
${A}_m(\Omega)$ defined by a normalization (to make the inner
product positive definite). ${\mathcal{A}}(\Omega)$ is Dirichlet
space.

\item $B_m(\Omega)$, $B(\Omega)=B_1(\Omega)$: Weighted Bergman
spaces of positive index  $m=1,2,\dots$. $B(\Omega)$ ordinary
Bergman space.

\item $B_e(\Omega)$, $B_{m,e}(\Omega)$ ($m=1,2,\dots$), the subspaces of
$B(\Omega)$, $B_m(\Omega)$ consisting of ``exact'' differentials
with respect to $\Lambda_m$.

\item $P_m(\Omega)=\{F\in A_m(\Omega): \Lambda_m F =0\}
=\{F\in A_m(\Omega): (f,f)_{-m}=0\}$, the space of $\frac{1-m}{2}$:s
order differentials in $\Omega$ which expressed in any projective
coordinate are polynomials of degree $\leq m-1$ ($m=1,2,\dots$).

\item $H(\Omega)$, $H_e(\Omega)$: spaces of harmonic functions.

\item $(f,g)_m$, the inner product on $B_m(\Omega)$ for $m\geq 1$,
on ${\mathcal{A}}_{-m}(\Omega)$ for $m\leq 0$.

\item $D(f,g)$, Dirichlet inner product.

\item ${\mathcal K}_m(z,\zeta)$, reproducing kernel for ${\mathcal
A}_m(\Omega)$; ${\mathcal K}(z,\zeta)={\mathcal K}_1(z,\zeta)$.

\item ${\mathcal L}_m(z,\zeta)$, adjoint kernel for ${\mathcal
A}_m(\Omega)$; ${\mathcal L}(z,\zeta)={\mathcal L}_1(z,\zeta)$.

\item $K(z,\zeta)=-\frac{2}{\pi}
\frac{\partial^2 G(z,\zeta)}{\partial z\partial \bar{\zeta}}$, the
Bergman kernel, reproducing kernel for $B(\Omega)$.

\item $K_e(z,\zeta)=-\frac{2}{\pi}
\frac{\partial^2 G_\gamma(z,\zeta)}{\partial z\partial
\bar{\zeta}}$, the reduced Bergman kernel, reproducing kernel for
$B_e(\Omega)$.

\item $K_m(z,\zeta)$, the reproducing kernel for $B_m(\Omega)$.

\item $K_{m,e}(z,\zeta)$, the reproducing kernel for $B_{m,e}(\Omega)$.

\item $L(z,\zeta)=\frac{2}{\pi}\frac{\partial^2 G(z,\zeta)}{\partial z\partial
{\zeta}}$, the Schiffer kernel, or adjoint Bergman kernel.
(Similarly for $L_m(z,\zeta)$, $L_{m,e}(z,\zeta)$.)

\item $\ell(z,\zeta)$, the regular part of $L(z,\zeta)$.

\item $k(z,\zeta)$, $k_e(z,\zeta)$, reproducing kernels for $H(\Omega)$ and
$H_e(\Omega)$ respectively.

\item $M(z,\zeta)$, the ordinary Martin kernel.

\item $F(z,\zeta)$, the Martin kernel for the gradient structure.

\item $ds$, arc-length differental along a curve.

\item $\kappa$, the curvature of a curve in the
complex plane (and sometimes short for $\kappa_{\rm Gauss}$).

\item $d\sigma = \rho(z)|dz|=\frac{|dz|}{\omega(z)}=e^{p(z)}|dz|$,
a hermitean metric, e.g., the Poincar\'e metric.


\item $\kappa_{\rm Gauss}$, the Gaussian curvature of a hermitean metric.

\item $S(z)$, the Schwarz function of an analytic curve ($S(z)=\bar{z}$
on the curve, $S(z)$ analytic in a full neighborhood of the curve).

\item $T(z)=\frac{dz}{ds}$, the unit tangent vector on a curve (and its
analytic extension to a neighborhood of the curve, if the curve is
analytic).

\item $\{z,t\}_k$, a differential expression appearing in the definition of a
$k$-connection ($k=0,1,2$). (The Schwarzian derivative if $k=2$.)

\item $\nabla_k$, the covariant derivative for an affine connection,
when acting on $k$:th order differentials.

\item $\Lambda_m$, the $m$:th order Bol operator, the covariant derivative
for a projective connection, acting on differentials of order
$\frac{1-m}{2}$.

\item $\nabla$, the gradient.

\item ${\mathcal{L}}$, the Legendre transform.

\item $S^m$, the $m$-fold symmetric product (of a differential operator).

\item $\partial=\partial_z=\frac{\partial}{\partial z}$,
$\bar{\partial}=\bar{\partial}_z=\frac{\partial}{\partial \bar{z}}$.

\end{itemize}



\end{document}